\def\row#1/#2!{#1_{\IfStrEq{#2}{}{n}{#2}} & \dynkin{#1}{#2}\\}
\newcommand{\congto}{\xrightarrow{\raisebox{-1ex}[0ex][0ex]{$\sim$}}}
\newcounter{braid}
\newcounter{strands}
\def\cross{%
  \@ifnextchar^{\message{Got sup}\cross@sup}{\cross@sub}}
\def\cross@sup^#1_#2{\render@cross{#2}{#1}}
\def\cross@sub_#1{\@ifnextchar^{\cross@@sub{#1}}{\render@cross{#1}{1}}}
\def\cross@@sub#1^#2{\render@cross{#1}{#2}}
\def\render@cross#1#2{
  \def\strand{#1}
  \def\crossing{#2}
  \pgfmathsetmacro{\cross@y}{-\value{braid}*\braid@h}
  \pgfmathtruncatemacro{\nextstrand}{#1+1}
  \foreach \thread in {1,...,\value{strands}}
  {
    \pgfmathsetmacro{\strand@x}{\thread * \braid@w}
    \ifnum\thread=\strand
    \pgfmathsetmacro{\over@x}{\strand * \braid@w + .5*(1 - \crossing) * \braid@w}
    \pgfmathsetmacro{\under@x}{\strand * \braid@w + .5*(1 + \crossing) * \braid@w}
    \draw[braid] \pgfkeysvalueof{/tikz/braid start} +(\under@x pt,\cross@y pt) to[out=-90,in=90] +(\over@x pt,\cross@y pt -\braid@h);
    \draw[braid] \pgfkeysvalueof{/tikz/braid start} +(\over@x pt,\cross@y pt) to[out=-90,in=90] +(\under@x pt,\cross@y pt -\braid@h);
    \else
    \ifnum\thread=\nextstrand
    \else
     \draw[braid] \pgfkeysvalueof{/tikz/braid start} ++(\strand@x pt,\cross@y pt) -- ++(0,-\braid@h);
    \fi
   \fi
  }
  \stepcounter{braid}
}
\tikzset{braid/.style={double=\pgfkeysvalueof{/tikz/braid colour},double distance=1pt,line width=2pt,white}}
\newcommand{\braid}[2][]{%
  \begingroup
  \pgfkeys{/tikz/strands=2}
  \tikzset{#1}
  \pgfkeysgetvalue{/tikz/braid width}{\braid@w}
  \pgfkeysgetvalue{/tikz/braid height}{\braid@h}
  \setcounter{braid}{0}
  \let\sigma=\cross
  #2
  \endgroup
}
\definecolor{airforceblue}{rgb}{0.36, 0.54, 0.66}
\definecolor{babyblue}{rgb}{0.54, 0.81, 0.94}
\definecolor{bleudefrance}{rgb}{0.19, 0.55, 0.91}
\definecolor{darkpink}{rgb}{0.91, 0.33, 0.5}
\DeclarePairedDelimiterX\braket[2]{\langle}{\rangle}{#1 \delimsize\vert #2}
\def\det{\mathrm{det}}
\def\det{\mathrm{det}}
\def\C{\mathbb C}
\def\Z{\mathbb Z}
\def\N{\mathbb N}
\theoremstyle{plain}
\newtheorem{thm}{Theorem}[section]%
\newtheorem{prop}[thm]{Proposition}%
\newtheorem{lma}[thm]{Lemma}%
\theoremstyle{definition}
\newtheorem{define}[thm]{Definition}%
\newtheorem{eg}[thm]{Example}%
\newtheorem{rmk}[thm]{Remark}
\begin{document}
\title{Integrable system on minimal nilpotent orbit}
\author{Xinyue Tu}
\address{Bâtiment 307, rue Michel Magat,
Faculté des Sciences d’Orsay, Université Paris Saclay}
\email{logic520@163.com}
\address{ORCiD: 0009-0007-2211-1456}

\begin{abstract}
We show that for every complex simple Lie algebra $\mathfrak{g}$, the equations of Schubert divisors on the flag variety $G/B^-$ give a complete integrable system of the minimal nilpotent orbit $\mathcal{O}_{\min}$. The approach is motivated by the integrable system on Coulomb branch \cite{bfn16}. We give explicit computations of these Hamiltonian functions, using Chevalley basis and a so-called Heisenberg algebra basis.  For classical Lie algebras we rediscover the lower order terms of the celebrated Gelfand-Zeitlin system. For exceptional types we computed the number of Hamiltonian functions associated to each vertex of Dynkin diagram. They should be regarded as analogs of Gelfand-Zeitlin functions on exceptional type Lie algebras.
\end{abstract}
\maketitle
\makeatletter
\def\keywords{\xdef\@thefnmark{}\@footnotetext}
\makeatother
\keywords{2020 \emph{Mathematics Subject Classification.}17B80; ;17B08; 17B63; 22E46;14D21.} 
\keywords{\emph{Key words and phrases:} Integrable system, Hamiltonian, minimal nilpotent orbit.}%
\keywords{\emph{ORCiD number:} 0009-0007-2211-1456}

\pagenumbering{Roman}
\tableofcontents

\pagenumbering{arabic}
\nocite{*}

\section{Introduction}\label{miaowu}

A. Braverman, M. Finkelberg and H. Nakajima in their series of papers (\cite{bfn16},  \cite{bfn162}) proposed a mathematically rigorous description of a Coulomb branch $\mathcal{M}_C(G, \mathbf{N})$. It is a certain hyper-Kähler manifold that depends on a complex reductive algebraic group $G$ and its complex representation $\mathbf{N}$. Let $\mathfrak{t}$ be the Lie algebra of a maximal torus of $G$ and $W$ be the Weyl group. Then $\mathbb{C}[\mathfrak{t} / W]$ is a Poisson commutative subalgebra of $\mathbb{C}\left[\mathcal{M}_C(G, \mathbf{N})\right]$; therefore, a Coulomb branch can be equipped with an integrable system (see remark 3.11 in \cite{bfn16}  and lemma 5.11 in \cite{bfn162}) .
A special case of interest is when $\mathcal{M}_C(G, \mathbf{N})$ arises from a quiver gauge theory. In that case, the Coulomb branch can be identified with a generalized Grassmannian slice $\overline{\mathcal{W}} ^\lambda_{\mu}$, which depends on coweights $\mu$ and $\lambda$ of $G$. In the special case when $\mu=0$ and $\lambda$ is the quasi-miniscule coweight denoted by $\theta$, $\overline{\mathcal{W}}_0^\theta \cong \overline{\mathcal{O}}_{\text {min }}=\mathcal{O}_{\text {min }} \cup\{0\}$, where $\mathcal{O}_{\text{min}}$ is the minimal nilpotent orbit associated to $\mathfrak{g}$. In this note we show that this isomorphism is Poisson (with the Poisson bracket on $\mathcal{O}_{\text {min }}$ given by the Kirillov-Kostant bracket). As such, the minimal nilpotent orbit $\mathcal{O}_{\min }$ is a Coulomb branch.

We specialize the general construction of integrable systems on $\mathcal{M}_C(G, \mathbf{N})$ to $\mathcal{O}_{\text {min }}$. It is shown that the integrable system is given by the equations of Schubert divisiors on the flag variety $G / B$, where $B$ is a Borel subgroup of $G$. As a final result, the integrable systems are computed explicitly in all Lie types in the Chevalley basis and in some specific representations.

 Besides the minimal nilpotent orbit, there are some other nilpotent orbits which are Coulomb branches. For example, in \cite{achar13} some nilpotent orbits of small dimension can be viewed inside the affine grassmannian, and integrable system of similar form as this note is expected. If one allow for Coulomb branches of orthosymplectic quiver gauge theories, then special nilpotent orbits of classical type are Coulomb branches in this sense, see \cite{hanany}. However, the explicit form of integrable systems on these orbits are not known. 
\subsection{Notations and statement of theorem}

 Throughout this note, let $G$ be a complex simple algebraic group with Lie algebra $\mathfrak{g}$ and rank $l$. Pick a Borel subgroup $B$ inside $G$ with corresponding Lie algebra $\mathfrak{b}$, $U \subset B$ is the unipotent radical, $ U^{-} \subset B^{-}$is the unipotent radical of the opposite Borel, a maximal torus is denoted by $T=B\cap B^-$. 
We denote by $\mathcal{B}$ the flag variety of $G$, thus we have $\mathcal{B}=G/B=G/B^-$.
Let $\Lambda_G$ be the coweight lattice of $G$,  $\Lambda_G^{+} \subset \Lambda_G$ be the dominant part of the coweight lattice,  and $\check{\Lambda}_G^{+} \subset \check{\Lambda}_G$ be the dominant part of the weight lattice. 

\textbf{Basis, weights, parabolic subgroup.} Fix a set of Chevalley generators $\left(e_1, f_1, h_1, \ldots, e_n, f_n, h_n\right)$ of $\mathfrak{g}$ which is compatible with the choice of borel $\mathfrak{b}$. We have $h_i=\left[e_i, f_i\right]$ for $i \in\{1, \ldots, n\}$ and the decomposition $\mathfrak{g}=\mathfrak{n}_{+} \oplus \mathfrak{h} \oplus \mathfrak{n}_{-}$, where $\mathfrak{n}_{+}$is generated by $\left\{e_i \mid 1 \leq i \leq n\right\}, \mathfrak{n}_{-}$is generated by $\left\{f_i\right\}$ and the Cartan subalgebra $\mathfrak{h}$ is spanned by $h_i$. 
Let $\{\varpi_i| 1\leq i\leq l\}$ be the set of fundamental weights (the order $i$ as in Bourbaki's notation \cite{bou}), and $\{\varpi^\vee_i|1\leq i\leq l\}$ be the set of fundamental coweights. Let $P_i$ be the maximal parabolic subgroup associated to $\varpi_i$, and $L_i$ be the Levi factor of $P_i$. Let $W_{P_i}$ be the Weyl group of $P_i$. Let $R^+$ be the positive roots of $\mathfrak{g}$, and $R_{P_i}$ be the roots associate with $P_i$, i.e, $R_{P_i}$ are linear combinations of $\alpha_j$ ($j\neq i$).  Denote the coroot associated with $\alpha$ by $H_\alpha=\alpha^\vee$. For any fundamental weight $\varpi_i$ and simple root $\alpha_j$, we have $\langle \varpi_i, \alpha_j^\vee\rangle=2\frac{\langle\varpi_i,\alpha_j\rangle}{\langle \alpha_j,\alpha_j\rangle}=\delta_{ij}$.  

\textbf{Representations.}
 For each vertex $1\leq k\leq l=\mathrm{rank}(G)$ of the Dynkin diagram of $G$, denote the associated fundamental representation by $V_k$ and highest weight vector by $v_k$. For any $\chi \in \check{\Lambda}_G^{+}$, let $V_\chi$ be the irreducible $G$-module of highest weight $\chi$ and $L_\chi \subset V_\chi$ be the highest weight line. Let $\theta^\vee$ be the highest weight of adjoint representation of $G$, then $\theta$ is the quasi-minuscule coweight. Pick a basis $(e_i)_{i=1,\cdots, l}$ of $\mathfrak{g}$.  We define
    $\mathrm{Cas}:=\sum_{i=1}^{l}e_i\otimes e_i^*\in U(g)\otimes U(g)^*$,
    and  write its power using Sweedler's notation:
$\mathrm{Cas}^n=\mathrm{Cas}^n_{(1)}\otimes\mathrm{Cas}_{(2)}^n$.

 \textbf{Orbit, transversal slice and Zastava.}
Let $\mathcal{O}_{\min}$ be the minimal nilpotent orbit inside $\mathfrak{g}$, i.e, $\mathcal{O}_{\min}=G\cdot e_{\theta^\vee}$, where $e_{\theta^\vee}$ is the root vector of highest weight,  From \cite{wang} we know $\dim\,\mathcal{O}_{\min}=2h^\vee-2$, where $h^\vee$ is the dual coxeter number of $G$. Let $\overline{\mathcal{W}} ^\lambda_{-\mu}$ be the generalized affine grassmanian slice with coweights $\lambda$, $\mu$, we will recall its definition in \ref{2.2}. For any algebraic group $H$, let $H_1[z^{-1}]$ be the kernel of evaluation map $H[z^{-1}] \rightarrow H$ at $z \rightarrow \infty$. Let $\mathring{\mathcal{Z}}^{\alpha}$ be the open Zastava associated with coweight $\alpha$, $\mathcal{Z}^{\alpha}$ be the Zastava space. We will recall them in \ref{wyh}.  Our main theorem is the following:
\begin{thm}[Main theorem]\label{998}
\begin{enumerate}
\item
We have maps $\overline{\mathcal{W}}_{-\theta}^0\cong\mathring{\mathcal{Z}}^\theta\hookrightarrow \mathcal{O}_{\min}\stackrel{\exp}{\rightarrow}Gr_G(\mathbb{C})\hookrightarrow \mathcal{Z}^\theta$ identifying the Kirillov-Kostant Poisson structure on the minimal nilpotent orbit with the Poisson structure of the generalized affine grassmannian  $\overline{\mathcal{W} }^0_{-\theta}$.         

\item
Define the functions 
\[f_{n,k}:=\mathrm{Cas}_{(2)}^n\langle\mathrm{Cas}_{(1)}^n v_k, v^k\rangle (x)\in\C[\mathfrak{g}] \quad\text{for}\, 1\leq k\leq l, \,n\geq1.\]
Then for each vertex  $k$, there is a number $m_k$  such that $\{f_{n,k}|1\leq k\leq l, 1\leq n\leq m_k\}$ form a complete integrable system on $\mathcal{O}_{\min}$, and they are induced from the equations of Schubert divisors on opposite flag $G/B^-$.  If $G$ is of classical type, then these functions are lower terms of Gelfand-Zeitlin's Hamiltonians.   

We list the number $m_k$ associated to each $k$ of every $G$ as in Figure 1, note that the numbers on vertexes of Dynkin diagram add up to $h^\vee-1$, which is half dimension of the minimal nilpotent orbit. 
\begin{figure}\label{xiaoxiao}
\centering
\begin{align*}
A_n &&& \underset{\mathclap{1}}{\circ} - \underset{\mathclap{1}}{\circ} - \dotsb - \underset{\mathclap{1}}{\circ} - \underset{\mathclap{1}}{\circ} && h^\vee-1=n \\
B_n &&& \underset{\mathclap{1}}{\circ} - \underset{\mathclap{2}}{\circ} - \dotsb - \underset{\mathclap{2}}{\circ} \Rightarrow \underset{\mathclap{1}}{\circ} && h^\vee-1=2n-2 \\
C_n &&& \underset{\mathclap{1}}{\circ} - \underset{\mathclap{1}}{\circ} - \dotsb - \underset{\mathclap{1}}{\circ} \Leftarrow \underset{\mathclap{1}}{\circ} && h^\vee-1= n\\
D_n &&& \underset{\mathclap{1}}{\circ} - \underset{\mathclap{2}}{\circ} - \dotsb - \underset{\mathclap{2}}{\overset{\overset{\textstyle\circ_{\mathrlap{1}}}{\textstyle\vert}}{\circ}} \,-\, \underset{\mathclap{1}}{\circ} && h^\vee-1=2n-3 \\
E_6 &&& \underset{\mathclap{1}}{\circ} - \underset{\mathclap{2}}{\circ} - \underset{\mathclap{3}}{\overset{\overset{\textstyle\circ_{\mathrlap{2}}}{\textstyle\vert}}{\circ}} - \underset{\mathclap{2}}{\circ} - \underset{\mathclap{1}}{\circ} && h^\vee-1=11 \\
E_7 &&& \underset{\mathclap{2}}{\circ} - \underset{\mathclap{3}}{\circ} - \underset{\mathclap{4}}{\overset{\overset{\textstyle\circ_{\mathrlap{2}}}{\textstyle\vert}}{\circ}} - \underset{\mathclap{3}}{\circ} - \underset{\mathclap{2}}{\circ} - \underset{\mathclap{1}}{\circ} && h^\vee-1=17\\
E_8 &&& \underset{\mathclap{2}}{\circ} - \underset{\mathclap{3}}{\circ} - \underset{\mathclap{4}}{\circ} - \underset{\mathclap{5}}{\circ} - \underset{\mathclap{6}}{\overset{\overset{\textstyle\circ_{\mathrlap{3}}}{\textstyle\vert}}{\circ}} - \underset{\mathclap{4}}{\circ} - \underset{\mathclap{2}}{\circ} && h^\vee-1=29 \\
F_4 &&& \underset{\mathclap{2}}{\circ} - \underset{\mathclap{3}}{\circ} \Rightarrow \underset{\mathclap{2}}{\circ} - \underset{\mathclap{1}}{\circ} && h^\vee-1=8 \\
G_2 &&& \underset{\mathclap{2}}{\circ} \Rrightarrow \underset{\mathclap{1}}{\circ} && h^\vee-1=3
\end{align*}
\caption{}
\end{figure}
\end{enumerate}
\end{thm}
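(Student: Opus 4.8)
The plan is to handle the two parts of the theorem in turn, using Part (1) as the Poisson-geometric backbone that makes the Hamiltonians of Part (2) automatically commute.

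For Part (1), I would first unwind the chain $\overline{\mathcal{W}}^0_{-\theta}\cong\mathring{\mathcal{Z}}^\theta\hookrightarrow\mathcal{O}_{\min}\xrightarrow{\exp}Gr_G(\mathbb{C})\hookrightarrow\mathcal{Z}^\theta$ at the level of varieties: the variety identifications are standard (BFN for the Grassmannian slice, the Zastava description for $\mathring{\mathcal{Z}}^\theta$), $\exp$ is well defined on nilpotents and $x\mapsto\exp(xz^{-1})$ lands in $Gr_G(\mathbb{C})$, which sits inside $\mathcal{Z}^\theta$ as the central fiber. Every arrow is $G$-equivariant and $\mathbb{C}^\times$-equivariant for the conical scaling (under which the Kirillov--Kostant bracket has weight $-1$). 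Hence the Poisson structure transported from $\mathcal{Z}^\theta$ (resp.\ from $\overline{\mathcal{W}}^0_{-\theta}$) onto $\mathcal{O}_{\min}$ is a $G$-invariant, weight $-1$ Poisson bivector on the single $G$-orbit $\mathcal{O}_{\min}=G/G^{e_\theta}$. Such a bivector is determined by an element of $\bigl(\wedge^2(\mathfrak{g}/\mathfrak{g}^{e_\theta})\bigr)^{G^{e_\theta}}$ of the correct weight, and on the symplectic leaf $\mathcal{O}_{\min}$ the nondegenerate ones form a single $\mathbb{C}^\times$; so the transported structure agrees with the Kirillov--Kostant one up to a scalar. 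That scalar is pinned to $1$ by evaluating the bracket of two linear functions $x\mapsto\langle\xi,x\rangle$ and $x\mapsto\langle\eta,x\rangle$ at the point $e_\theta$, where Kirillov--Kostant gives $\langle[\xi,\eta],e_\theta\rangle$, against the explicit one-dimensional local model of the Zastava/slice near $e_\theta$. The subtlety here is checking that the $\mathbb{C}^\times$-weights genuinely match on both sides, so that ``up to scalar'' is a constant and not a function of the conical parameter.

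For Part (2), begin from the BFN integrable system: $\mathbb{C}[\mathfrak{t}/W]\hookrightarrow\mathbb{C}[\overline{\mathcal{W}}^0_{-\theta}]$ is Poisson-commutative (\cite{bfn16}, \cite{bfn162}), so by Part (1) it yields a Poisson-commutative subalgebra $\AAA\subset\mathbb{C}[\overline{\mathcal{O}}_{\min}]$, and everything reduces to identifying $\AAA$ with the span of the $f_{n,k}$ and to counting. I would (i) write generators of $\mathbb{C}[\mathfrak{t}/W]$ as coefficients of characteristic polynomials in the fundamental representations $V_k$, (ii) transport these through $\exp$ and the identification $\overline{\mathcal{O}}_{\min}\cong\overline{\mathcal{W}}^0_{-\theta}$, expanding order by order in the loop parameter, and check that the outcome is exactly the family $f_{n,k}=\mathrm{Cas}^n_{(2)}\langle\mathrm{Cas}^n_{(1)}v_k,v^k\rangle$, the twist by $\mathrm{Cas}^n_{(2)}$ being precisely the Jacobian correction between the natural ``$y$''-coordinate on the slice and the linear coordinate $x$ on $\mathfrak{g}$. (Alternatively one can verify $\{f_{n,k},f_{m,j}\}=0$ directly by a Kostant-style argument about extremal matrix coefficients, but this essentially re-derives BFN.) For the Schubert-divisor statement I would use that the Schubert divisor $X_{w_0s_k}\subset G/B^-$ is the zero locus of the extremal-weight section $\sigma_k\in H^0(G/B^-,\mathcal{L}_{\varpi_k})$, with $\sigma_k(g)=\langle g\cdot v_k,v^k\rangle$ in the standard trivialization, and that pulling $\sigma_k$ back along $\mathcal{O}_{\min}\xrightarrow{\exp}Gr_G(\mathbb{C})$ (via the lattice/reduction description of points of $Gr_G$) and expanding produces exactly the collection $\{f_{n,k}\}_{n\ge1}$; this is the precise meaning of ``induced from the equations of Schubert divisors.''

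The counting is then representation-theoretic. For $x\in\mathcal{O}_{\min}$, up to conjugation $x$ lies in an $\mathfrak{sl}_2$-triple with semisimple element $\theta^\vee$, so $V_k$ decomposes into $\mathfrak{sl}_2$-strings of length at most $\langle\varpi_k,\theta^\vee\rangle+1$; hence $\rho_{V_k}(x)^{m_k+1}=0$ with $m_k:=\langle\varpi_k,\theta^\vee\rangle$, and one checks that $f_{1,k},\dots,f_{m_k,k}$ are algebraically independent on $\mathcal{O}_{\min}$ while for $n>m_k$ each $f_{n,k}$ is a polynomial in the earlier ones together with the other $f_{*,*}$. Summing, $\sum_k m_k=\langle\sum_k\varpi_k,\theta^\vee\rangle=\langle\rho,\theta^\vee\rangle=h^\vee-1=\tfrac12\dim\mathcal{O}_{\min}$, which also reproduces the per-vertex labels of Figure 1 (these are the dual Kac labels). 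So once algebraic independence of the whole collection $\{f_{n,k}:1\le k\le l,\ 1\le n\le m_k\}$ is established --- say by exhibiting a point of $\mathcal{O}_{\min}$ where the Jacobian has full rank, or by transporting the evident independence of the Schubert equations on $G/B^-$ --- we obtain a complete integrable system. Finally, for classical $G$ I would rewrite the $f_{n,k}$ in matrix-coefficient form (traces of powers of corner submatrices for $\mathfrak{gl}_n$, $\mathfrak{so}_n$, $\mathfrak{sp}_n$) and match them termwise with the classical Gelfand--Zeitlin Hamiltonians, noting that on $\mathcal{O}_{\min}$ the corner matrices have rank $\le2$, so only the lowest-degree Gelfand--Zeitlin functions survive and these are exactly the $f_{n,k}$. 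The main obstacle is step (ii) of Part (2): making the $\exp$-transport and the Casimir normalization rigorous at the level of coordinate rings, together with the algebraic-independence argument; by contrast, Poisson-commutativity and the numerical identity $\sum_k m_k=h^\vee-1$ are comparatively soft once the setup is in place.
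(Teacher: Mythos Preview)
Your overall strategy matches the paper's closely: Part~(1) is handled by $G$-equivariance of $\exp$ together with uniqueness (up to scalar) of the $G$-invariant symplectic structure on the homogeneous space $\mathcal{O}_{\min}$, and Part~(2) proceeds by identifying the Hamiltonians with the Schubert-divisor equations $g\mapsto\langle v^k,gv_k\rangle$ pulled back through $\exp$ and expanded in powers of $t^{-1}$, with Poisson commutativity inherited from the known Zastava integrable system $\pi:\mathcal{Z}^\theta\to\mathbb{A}^\theta$.

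There is one genuine difference worth noting. For the numbers $m_k$ the paper proceeds by case-by-case verification (it writes out $F_4$ and $E_8$, checking by hand the maximal $r$ with $\varpi_k-r\theta^\vee$ still a weight of $V_{\varpi_k}$). Your observation that $m_k=\langle\varpi_k,\theta^\vee\rangle$ via the $\mathfrak{sl}_2$-string through $v_k$, together with $\sum_k m_k=\langle\rho,\theta^\vee\rangle=h^\vee-1$, is a uniform argument that replaces the paper's case analysis and immediately explains why the $m_k$ are the dual Kac labels (the paper only remarks on this similarity without using it). This is a real simplification.

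Two smaller points. First, your description of the $\mathrm{Cas}^n$ formula as a ``Jacobian correction between the $y$-coordinate on the slice and the linear coordinate $x$'' is off: in the paper's Proposition on matrix coefficients, the identity $\langle v^k,x^nv_k\rangle=\mathrm{Cas}^n_{(2)}\langle\mathrm{Cas}^n_{(1)}v_k,v^k\rangle(x)$ is nothing more than expanding $x=\sum a_ie_i$ and collecting terms; there is no change of variables involved. Second, for algebraic independence and completeness, the paper does not argue via a Jacobian-rank check but simply invokes that $\pi:\mathcal{Z}^\theta\to\mathbb{A}^\theta$ is already known to be an integrable system (fibers of half dimension, Poisson-commutative image), so once the count $\sum_k m_k=h^\vee-1$ is established the system is automatically complete on the dense open $\mathring{\mathcal{Z}}^\theta\subset\mathcal{O}_{\min}$. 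Your proposed Jacobian check would work too, but is not needed.
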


\subsection{Strategy of proof}

The desired geometric integrable system come from the fact that the minimal nilpotent orbit is an example of the Coulomb branch.  From remark 3.11 in \cite{bfn16}  and lemma 5.11 in \cite{bfn162} , one can construct a Poisson morphism from the open Zastava $\mathring{\mathcal{Z}}^\theta$ to the minimal nilpotent orbit via Coulomb branch description.  There is an integrable system on open Zastava 
(see diagram \eqref{miao}). However, to carry the integrable system on  the open Zastava (identified as an affine grassmanian slice) to minimal nilpotent orbit and give concrete formula, one has to verify that the exponential map $\exp:\,\overline{\mathcal{O}_{min}}\to \mathrm{Gr}_G(\C)$ is Poisson, with the Poisson structure on the nilpotent orbit side be the Kirillov-Kostant strucuture, and on the affine grassmanian slice side is the Poisson structure induce by the Manin triple $\mathfrak{g}((t^{-1})),t^{-1}\mathfrak{g}[[t^{-1}]],\mathfrak{g}[t])$.  \par
To show our exponential map is Poisson, the key step is to observe that the exponential map is $G$-equivariant (which is a proposition by Beilinson-Drinfeld \cite[pp.183]{bei91}). This is shown in proposition \ref{2.10}.
Finally, investigating the modular definition of the open Zastava and its integrable system $\pi$ yields that its Hamiltonians are essentially the same as equations of Schubert divisors of flag variety $G/B^-$, as in lemma \ref{2.9}.

Then we can compute the expression of Hamiltonians in Chevalley basis \eqref{33}. This method gives infinite hamitonian functions at first glance, but  from \cite{wang} we know $\dim\,\mathcal{O}_{\min}=2h^\vee-2$, so there should be $h^\vee-1$ (i.e, half-dimension) independent Poisson-commuting Hamiltonian functions, and it is an essential issue to figure out if the Hamiltonian functions vanish when restricted from $\mathfrak{g}$ to $\mathcal{O}_{\min}$.

A possible way is to use Joseph's ideal $\mathcal{J}$ \cite{Jos} and check that the higher-order Hamiltonians all fall in the commutative algebra $\operatorname{gr}\mathcal{J}$ (which is the vanishing ideal of $\mathcal{O}_{\min}$ in $\mathfrak{g}$), but the generators of $\mathcal{J}$ are extremely complicated, so we would not take this approach. 
 
One can determine which Hamiltonians are non-vanishing once we switch to a Heisenberg algebra's basis. The higher order Hamiltonians vanish due to an analysis on the weight diagram for each fundamental weight representation and we obtain the numbers $m_k$  in section \ref{ppl}.  They satisfy $\sum_k m_k=h^\vee-1=\frac{1}{2}\dim \mathcal{O}_{\min}$, so the Hamiltonians above form a complete integrable system.
We also remark that these numbers $m_k$ bear a similarity with the coordinates of the unique vector $\delta=(a_0,a_1,\cdots,a_l)$ such that $A\delta=0$ and the $a_i$ are positive relatively prime integers, where $A$ is the generalized Cartan matrix of $G$, see in \cite[pp.44]{kac}.

\subsection{Organization of the paper:}In the remaining part of section \ref{miaowu}, we briefly recall the origin of integrable systems on Coulomb branch.

In section \ref{ffd}, we show the integrable system on the open Zastava (which is a special Coulomb branch, with Poisson structure coming from a Manin triple ) can be translated into integrable system on minimal nilpotent orbit, with the Kirillov-Kostant Poisson structure .

In section \ref{s4}, we give general formulas for the Hamiltonians for every complex simple Lie algebra using two basis: Chevelley basis and Heisenberg basis. The order-1 Hamiltonians are projections to Cartan, and order-2 Hamiltonians also have simple formula \ref{2222}. 

In section \ref{1111}, 
 we compute the Hamiltonians for the integrable system on minimal nilpotent for classical groups in terms of matrix elements basis, and obtain that they conincide with the lower terms of Gelfand Zeitlin systems. Then we conjecture the quantization of the Hamiltonians.

\subsection{Background on integrable system of Coulomb branch}

Braverman, Finkelberg and Nakajima \cite{bfn16} have given a mathematical definition of the Coulomb branch of a 3d N=4 gauge theory of cotangent type. These Coulomb branches are affine Poisson varieties that come equipped with a canonical integrable system and natural quantization of their ring of functions.

Let $\mathbf{N}$ be a finite dimensional representation of a complex simple algebraic group $G$. We consider the moduli space $\mathcal{R}_{G, \mathbf{N}}$ of triples $(\mathcal{P}, \sigma, s)$ where $\mathcal{P}$ is a $G$-bundle on the formal $\operatorname{disc} D=\operatorname{Spec} \mathcal{O}=\operatorname{Spec}\mathbb{C}[\![t]\!] ; \sigma$ is a trivialization of $\mathcal{P}$ on the punctured formal disc $D^*=\operatorname{Spec} \mathcal{K}=\operatorname{Spec}\mathbb{C}(\!(t)\!)$; and $s$ is a section of the associated vector bundle $\mathcal{P}_{\text {triv }}\times^G \mathbf{N}$ on $D^*$ such that $s$ extends to a regular section of $\mathcal{P}_{\text {triv }}\times^G\mathbf{N}$ on $D$, and $\sigma(s)$ extends to a regular section of $\mathcal{P} \times \mathbf{N}$ on $D$. In other words, $s$ extends to a regular section of the vector bundle associated to the $G$-bundle glued from $\mathcal{P}$ and $\mathcal{P}_{\text {triv }}$ on the non-separated formal scheme glued from 2 copies of $D$ along $D^*$ (raviolo). The group $G_{\mathcal{O}}$ acts on $\mathcal{R}_{G, \mathbf{N}}$ by changing the trivialization $\sigma$.

The Coulomb branch of BFN for the pair $(G,\mathbf{N})$ (also called for the Gauge theory $\mathcal{T}(T^*\mathbf{N})/G$)  is defined to be 
$\mathcal{M}_C(G, \mathbf{N}):=\mathrm{Spec} \,H_{\bullet}^{G_{\mathcal{O}}}(\mathcal{R}_{G, \mathbf{N}})$, they are discussed in detail in \cite{bfn16}, \cite{bfn162}.  The equivariant cohomology $H^{G_{\mathcal{O}}}_{\bullet}(\mathrm{pt})=\mathbb{C}[\mathfrak{t} / W]$ being a subalgebra of $H_{\bullet}^{G_{\mathcal{O}}}(\mathcal{R}_{G, \mathbf{N}}$ ) (actually is a Cartan subalgebra) yields the desired integrable system \[\pi:\,\mathcal{M}_C(G, \mathbf{N})=\mathrm{Spec} \,H_{\bullet}^{G_{\mathcal{O}}}\left(\mathcal{R}_{G, \mathbf{N}}\right)    \rightarrow \mathfrak{t} / W= \mathrm{Spec} \, H^{G_{\mathcal{O}}}_{\bullet}(\mathrm{pt}) . \] The integrability comes from the fact that the algebra $H_{\bullet}^{G_{\mathcal{O}}}\left(\mathcal{R}_{G, \mathbf{N}}\right)$ is equiped with quantization: a  deformation of  $\mathbb{C}[\hbar]-$ associative algebra $\mathbb{C}_{\hbar}\left[\mathcal{M}_C(G, \mathbf{N})\right]:=H_{\bullet}^{G_{\mathcal{O}} \rtimes \mathbb{C}^{\times}}\left(\mathcal{R}_{G, \mathbf{N}}\right)$ where $\mathbb{C}^{\times}$acts by loop rotations, and $\mathbb{C}[\hbar]:=H_{\mathbb{C}^{*}}^{\bullet}(\mathrm{pt})$. It gives rise to a Poisson bracket on $\mathbb{C}\left[\mathcal{M}_C(G, \mathbf{N})\right]$ with an open symplectic leaf, so that $\pi$ becomes an integrable system: $\mathbb{C}[\mathfrak{t} / W] \subset$ $\mathbb{C}\left[\mathcal{M}_C(G, \mathbf{N})\right]$ is a Poisson-commutative polynomial subalgebra with $\operatorname{rk}(G)$ generators.  

The study of this integrable system $\pi$ and finding the Hamiltonians 
is an object of great interest in the study of Gauge theory.
We consider quiver Gauge theory, let $G$ be an simple algebraic group, choose an orientation of its Dynkin graph and obtain a quiver $Q$, with $Q_0$ the set of vertices, and $Q_1$ the set of arrows. An arrow $e \in Q_1$ goes from its tail $t(e) \in Q_0$ to its head $h(e) \in Q_0$. We choose a $Q_0$-graded vector spaces $V:=\bigoplus_{j \in Q_0} V_j$ and $W:=\bigoplus_{j \in Q_0} W_j$. 

We encode the dimension of the $Q_0$-graded space $W$ by a dominant coweight 

$\lambda:=\sum_{i \in I} \operatorname{dim}\left(W_i\right) \omega_i \in \Lambda^{+}_G$, where $\Lambda_G$ is the coweight lattice, and encode the dimension of graded vector space $V$ by 
a positive $\text { coroot combination } \alpha:=\sum_{i \in I} \operatorname{dim}\left(V_i\right) \alpha_i \in \check{\Lambda^{+}_G}$, where $\check{\Lambda_G}$ is the weight lattice. The gauge group is defined to be $\mathbf{G}=\mathrm{GL}(V):=\prod_{j \in Q_0} \mathrm{GL}\left(V_j\right)$. 
\\ $\text { We set } \mu:=\lambda-\alpha \in \Lambda_G $, $\mathbf{N}:=\bigoplus_{e \in Q_1} \operatorname{Hom}\left(V_{t(e)}, V_{h(e)}\right) \oplus \bigoplus_{j \in Q_0} \operatorname{Hom}\left(W_j, V_j\right).$ In this case, the Coulomb branch associated to the pair $(G, \mathbf{N})$ is isomorphic to the generalized affine grassmannian slice:

\begin{equation}\label{222}\overline{\mathcal{W}}_\mu^\lambda \stackrel{\sim}{\longrightarrow} \mathcal{M}_C(\mathbf{G}, \mathbf{N})\end{equation}
Notice that the group used in the Coulomb branch for quiver gauge theory is the gauge group $\mathbf{G}$ rather than the original group $G$. 

\begin{equation}\label{22222}
\centering
\begin{tikzcd}
\mathbb{A}^{\frac{n(n-1)}{2}}& \mathcal{N}_{\mathfrak{gl}_N}\arrow[l,dashrightarrow]\\
\mathbb{A}^{h^\vee-1}\arrow[u,hook]& \overline{\mathcal{O}_{min}}\arrow[l, dashrightarrow, "\pi"]\arrow[u,hook]\\
& \accentset{\circ}{\mathcal{Z}}^\theta\cong \mathcal{M}_C(\mathbf{G}, \mathbf{N})\arrow[u,hook]\arrow[ul,"\pi"]
\end{tikzcd}
\end{equation}

For the definition of generalized affine grassmannian slice we refer to \ref{2.2}, and the original construction goes to \cite{fm}.
The isomorphism \eqref{222} was proved by Braverman-Finkelberg-Nakajima for the $ADE$ type  quiver in \cite{bfn16} , and later in \cite{nw} for $BCFG$ type by Nakajima-
Weekes.  We have
  $\mathcal{M}_C(\mathbf{G}, \mathbf{N})\stackrel{\sim}{\rightarrow} 
 \accentset{\circ}{\mathcal{Z}}^\theta\stackrel{\sim}{\longrightarrow}\overline{\mathcal{W}}_{-\theta}^0 \hookrightarrow \overline{\mathcal{W}}^\theta_0\cong \overline{\mathcal{O}_{min}}$ , where the open embedding holds for general, i.e, $\overline{\mathcal{W}}_{-d}^0 \hookrightarrow \overline{\mathcal{W}}^a_{a-d} $, and the last isomorphism comes from \cite{mal05}.  The diagram \eqref{22222} is drawn in type A ,  our results in the paper is a small step towards understanding integrable system on all nilpotent orbit.

\section{Construction of Integrable Systems}\label{ffd}

\subsection{Generalized transversal slice and Zastava }
We recall the Zastava space following \cite{bfn16} and then describe the integrable system on it.

\begin{define}\label{wyh}
For a dominant coweight $\alpha$, let $\mathring{\mathcal{Z}}^{\alpha}$ be the space of maps $\mathbb{P}^1\to \mathcal{B}$ of degree $\alpha$ sending $\infty\in\mathbb{P}^1$ to $\mathfrak{b}_-\in \mathcal{B}$. This is called the open Zastava space. It is known \cite{fm} that this is a smooth symplectic affine algebraic variety. 
We denote by $\mathcal{Z}^\alpha$ the corresponding space of based quasi-maps of degree $\alpha$ (i.e. quasi-maps of degree $\alpha$ which have no defect at $\infty$ and such that the corresponding map sends $\infty$ to $\mathfrak{b}_-$). This is called Zastava space and is an affine algebraic variety.
\end{define}

\begin{define}
Denote by $K_1$ the first congruence subgroup of $G_{\mathbb{C}\left[\left[z^{-1}\right]\right]}$, which is the kernel of the evaluation projection $\mathrm{ev}_{\infty}: G_{\mathbb{C}\left[\left[z^{-1}\right]\right]} \rightarrow G$. When $\lambda \in \Lambda_G^{+}, \mu \in \Lambda_G^+$, the transversal slice $\mathcal{W}_\mu^\lambda$
(resp. $\overline{\mathcal{W}}_\mu^\lambda$ ) is defined as the intersection of $\operatorname{Gr}_G^\lambda$ (resp. $\overline{\operatorname{Gr}}_G^\lambda$ ) and $K_1 \cdot \mu$ in the affine grassmannian $\mathrm{Gr}_G$. It is known that $\overline{\mathcal{W}}_\mu^\lambda$ is nonempty iff $\mu \leq \lambda$, and $\operatorname{dim} \overline{\mathcal{W}}_\mu^\lambda$ is an affine irreducible variety of dimension $\left\langle 2 \rho^{\vee}, \lambda-\mu\right\rangle$. Following an idea of I. Mirković, \cite{KWY} proved that $\overline{\mathcal{W}}_\mu^\lambda=\bigsqcup_{\mu \leq \nu \leq \lambda} \mathcal{W}_\mu^\nu$ is the decomposition of $\overline{\mathcal{W}}_\mu^\lambda$ into symplectic leaves of a natural Poisson structure.
\end{define}

\begin{define} \label{2.2}
Let $\lambda \in \Lambda_G^{+}$, but with $\mu \in \Lambda_G$ no longer required to be dominant.  We use the same notation to denote the  generalized transversal slice $\mathcal{W}_\mu^\lambda= B_1[t^{-1}]t^\mu U_1^-[t^{-1}]\cap G_{\mathcal{O}}t^\lambda G_{\mathcal{O}}$, its closure $\overline{\mathcal{W}}_\mu^\lambda$ has a modular definition, which is just the parametrization of the equivalence class of data $\left(\mathcal{F}_B, \phi\right)$, where $\mathcal{F}_B$ is a $B$-bundle on $\mathbb{P}^1$ of degree $w_0 \mu$. For any weight $\chi \in \check{\Lambda}_G$,  we have the associated  one dimensional $T$-module $L_\chi$. The induced line bundle $\mathcal{F}_B^{L_\chi}$ has degree $-\left\langle w_0 \mu, \chi\right\rangle$, and $\phi:\left.\left.\operatorname{Ind}_B^G \mathcal{F}_B\right|_{\mathbb{P}^1 \backslash\{0\}} \cong \mathcal{F}_G^{\text {triv }}\right|_{\mathbb{P}^1 \backslash\{0\}}$ is an isomorphism between the induced $G$-bundle $\operatorname{Ind}_B^G \mathcal{F}_B$ with the trivial $G$-bundle $\mathcal{F}_G^{\text {triv }}$ on the open set $\mathbb{P}^1 \backslash\{0\}$, such that the order of pole of $\phi$ is at most $\lambda$, i.e. for every irreducible $G$-module $V_\chi$ of highest weight $\chi \in \check{\Lambda}_G^{+}$, we have inclusions between sheaves
\begin{equation}\label{3}
\mathcal{F}_B^{V_\chi}\left(\left\langle w_0 \lambda, \chi\right\rangle[0]\right) \subset V_\chi \otimes \mathcal{O}_{\mathbb{P}^1} \subset \mathcal{F}_B^{V_\chi}(\langle\lambda, \chi\rangle[0]) .
\end{equation}
The datum $\left(\mathcal{F}_B, \phi\right)$ is subject to the condition $\phi\left(\left.\mathcal{F}_B\right|_{\{\infty\}}\right)=B$.
It is proved by Yehao \cite{YH} that the generalized transversal slice also have a decomposition into symplectic leaves: $\overline{\mathcal{W}}_\mu^\lambda=\bigsqcup_{\mu \leq \nu \leq \lambda} \mathcal{W}_\mu^\nu$.

\end{define}
\begin{rmk}
We are interested in the case where $\lambda=0$ and $\mu=-\theta$, where $\theta$ be the quasi-minuscule coroot $\theta$ (i.e, the coroot corresponding to the highest root).  Then $\overline{\mathcal{W}}_\theta^0$  is isomorphic to the moduli of $B$ structures of the trivial $G$ bundle on $\mathbb{P}^1$ such that the $B$ structure at $\infty$ is the Borel subgroup $B \subset G$. In other words, it is the open Zastava:

\begin{equation}\label{4444}
\overline{\mathcal{W}}_{-\theta}^0 =\mathcal{W}_{-\theta}^0\cong \mathring{\mathcal{Z} }^{-w_0 \theta}=\mathring{\mathcal{Z}}^{\theta}
\end{equation}
\end{rmk}

\begin{define}
We define a morphism from the transversal slice to the Zastava space , i.e, 
$q^\lambda_\mu: \overline{\mathcal{W}}_\mu^\lambda \rightarrow \mathcal{Z}^{\alpha^*}$,  where $\alpha=\lambda-\mu$,    by the following:
\[ \left(\mathcal{F}_B, \phi\right)\mapsto  \left\{\mathcal{F}_B^{L_\chi}\left(\left\langle w_0 \lambda, \chi\right\rangle[0]\right) \subset V_\chi \otimes \mathcal{O}_{\mathbb{P}^1}\right\}\left.\right|_{\lambda \in \Lambda_G^{+}}\]
We need to check RHS indeed defines a point in Zastava space. Starting from an equivalence class of datum $\left(\mathcal{F}_B, \phi\right)$, we can associate a generalized $B$-structure of the trivial $G$-bundle. Since $\mathcal{F}_B^{L_\chi} \subset \mathcal{F}_B^{V_\chi}$ is a line subbundle, it follows from \eqref{4444} that $\mathcal{F}_B^{L_\chi}\left(\left\langle w_0 \lambda, \chi\right\rangle[0]\right)$ is a rank one locally free subsheaf of $V_\chi \otimes \mathcal{O}_{\mathbb{P}^1}$, and
$$
\operatorname{deg} \mathcal{F}_B^{L_\chi}\left(\left\langle w_0 \lambda, \chi\right\rangle[0]\right)=\left\langle w_0 \lambda-w_0 \mu, \chi\right\rangle
.$$
The map $\chi \mapsto \mathcal{F}_B^{L_\chi}\left(\left\langle w_0 \lambda, \chi\right\rangle[0]\right) \subset V_\chi \otimes \mathcal{O}_{\mathbb{P}^1}$  satisfies the Plücker relations,  moreover, we have $\left.\mathcal{F}_B^{L_\chi}\left(\left\langle w_0 \lambda, \chi\right\rangle[0]\right)\right|_{\infty}$ being the highest weight line of $V_\chi$, therefore the collection \[\left\{\mathcal{F}_B^{L_\chi}\left(\left\langle w_0 \lambda, \chi\right\rangle[0]\right) \subset V_\chi \otimes \mathcal{O}_{\mathbb{P}^1}\right\}\left.\right|_{\lambda \in \Lambda_G^{+}}\]defines a quasimap, i.e, a point in the Zastava.
\end{define}
\begin{rmk}
When $\lambda=0, q^0_{-\theta}$ is identified with the natural embedding $\mathring{\mathcal{Z}}^\theta=\mathring{\mathcal{Z}}^{-w_0 \theta} \subset \mathcal{Z}^{-w_0 \theta}=\mathcal{Z}^\theta$, using the isomorphism \eqref{4444}.
\end{rmk}

\begin{define}
Define the shift morphism $s_{\mu, \nu}^\lambda: \overline{\mathcal{W}}_\mu^\lambda \rightarrow \overline{\mathcal{W}}_{\mu+\nu}^{\lambda+\nu}$  by mapping a moduli data $\left(\mathcal{F}_B, \phi\right) \in \overline{\mathcal{W}}_\mu^\lambda$ to $\left(\mathcal{F}_B\left(-w_0 \nu[0]\right), \phi^{\prime}\right) \in$ $\overline{\mathcal{W}}_{\mu+\nu}^{\lambda+\nu}$, where $\mathcal{F}_B\left(-w_0 \nu[0]\right)$ is the twist of $\mathcal{F}_B$ by the colored divisor $-w_0 \nu[0]$, and $\phi^{\prime}$ is the modular definition of $q^\lambda_\mu: \overline{\mathcal{W}}_\mu^\lambda \rightarrow \mathcal{Z}^{\alpha^*}$, one can easily see that the shift morphisms commute with projections to Zastava space.
\end{define}

\begin{lma}

 The shift morphism $ s_{-\theta, \theta}^0: \overline{\mathcal{W}}_{-\theta}^0 \rightarrow \overline{\mathcal{W}}_{0}^\theta $ is an open embedding. 

\end{lma}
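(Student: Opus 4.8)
The plan is to compare the two modular descriptions directly and show that the shift morphism $s^0_{-\theta,\theta}$ realizes $\overline{\mathcal W}^0_{-\theta}$ as the complement in $\overline{\mathcal W}^\theta_0$ of the locus where the $B$-structure degenerates, i.e. where the quasi-map acquires a defect at $0$. First I would recall that by definition the shift morphism sends a pair $(\mathcal F_B,\phi)$ to $(\mathcal F_B(-w_0\theta[0]),\phi')$, and that by the preceding definition it is compatible with the projections $q$ to the Zastava space $\mathcal Z^\theta$; since $q^0_{-\theta}$ was identified with the open embedding $\mathring{\mathcal Z}^\theta\hookrightarrow\mathcal Z^\theta$ and the target projection $q^\theta_0$ is also defined, it suffices to show that $s^0_{-\theta,\theta}$ is an isomorphism onto the preimage of $\mathring{\mathcal Z}^\theta$ under $q^\theta_0$, because an open subscheme of a variety pulled back along a morphism is open, and $\mathring{\mathcal Z}^\theta$ is open in $\mathcal Z^\theta$.

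Next I would check injectivity and construct the inverse on the open locus. Given a point of $\overline{\mathcal W}^\theta_0$ whose image in $\mathcal Z^\theta$ lies in $\mathring{\mathcal Z}^\theta$, the associated generalized $B$-structure of the trivial $G$-bundle is honest (no defect), so it is a genuine $B$-bundle $\mathcal F_B'$ on $\mathbb P^1$ of degree $0$ together with the trivialization $\phi'$ away from $0$ of pole order at most $\theta$, and with $\phi'(\mathcal F_B'|_\infty)=B$. Twisting back by $+w_0\theta[0]$ produces a $B$-bundle of degree $w_0\theta$, i.e. $w_0(-\theta) = -w_0\theta$ — here I would be careful with the sign conventions for the degree in Definition \ref{2.2}, matching degree $w_0\mu$ with $\mu=-\theta$ — and one checks the pole-order condition \eqref{3} for $\lambda=0$ transforms into the pole-order condition for $\lambda=\theta$ after the twist, using that $\langle w_0\theta,\chi\rangle$ shifts both bounding terms by the same amount. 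This gives a two-sided inverse on the open locus, hence $s^0_{-\theta,\theta}$ is an isomorphism onto an open subset, namely $(q^\theta_0)^{-1}(\mathring{\mathcal Z}^\theta)$.

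The main obstacle I anticipate is the bookkeeping of the two conditions that cut out the closed complement: one must verify precisely that the condition "$\mathcal F_B|_\infty$ maps to $B$ under $\phi$" is preserved by the twist at $0$ (it should be, since the twist is supported at $0$ and is invisible at $\infty$), and — more delicately — that the image of $s^0_{-\theta,\theta}$ is exactly the non-defect locus, not a strictly smaller open set. For the latter I would argue that a defect at $0$ of a quasi-map in $\mathcal Z^\theta$ (for the quasi-minuscule $\theta$) forces the only possible colored divisor to be $\theta[0]$ itself, so the defect-free locus is precisely the image of maps of degree $\theta$ that are honest at $0$; after the twist these correspond to $B$-bundles of degree $0$ with the trivialization regular at $0$, which is the data $\overline{\mathcal W}^0_{-\theta}$ parametrizes. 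Once these identifications are in place, openness is automatic, and the lemma follows. Finally I would remark that combined with \eqref{3} and the isomorphism $\mathring{\mathcal Z}^\theta\cong\overline{\mathcal W}^0_{-\theta}$, this places $\overline{\mathcal W}^0_{-\theta}$ as an open symplectic leaf inside $\overline{\mathcal W}^\theta_0\cong\overline{\mathcal O}_{\min}$, which is exactly what is needed for the main theorem.
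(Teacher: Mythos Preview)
Your approach is correct in outline and reaches the same conclusion, but it takes a more laborious route than the paper. You aim to exhibit an explicit inverse to $s^0_{-\theta,\theta}$ on the locus $(q_0^\theta)^{-1}(\mathring{\mathcal Z}^\theta)$ by untwisting, and you correctly identify the delicate point: one must check that the image is \emph{exactly} the non-defect locus, equivalently that untwisting a datum $(\mathcal F_B,\phi)\in\overline{\mathcal W}_0^\theta$ whose quasi-map is honest really yields pole order $\le 0$. Your proposed resolution via the quasi-minuscule property of $\theta$ is not needed and is a bit off the mark; the statement holds for any $\alpha$ in place of $\theta$, so that special feature should not enter.

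The paper sidesteps this obstacle entirely. It observes that since the shift commutes with the projections to Zastava and $q_{-\theta}^0$ is an isomorphism onto $\mathring{\mathcal Z}^\theta$, the map $s^0_{-\theta,\theta}$ is a \emph{section} of $q_0^\theta$ over $\mathring{\mathcal Z}^\theta$. Then it invokes only that $\overline{\mathcal W}_{-\theta}^0$ and $\overline{\mathcal W}_0^\theta$ are integral schemes of the same dimension $\langle 2\rho^\vee,\theta\rangle$: a section of a separated morphism is a closed immersion into the preimage, and a closed immersion between integral schemes of equal dimension is an isomorphism, so $s^0_{-\theta,\theta}$ is an isomorphism onto the open set $(q_0^\theta)^{-1}(\mathring{\mathcal Z}^\theta)$. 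This dimension-count argument is two lines and never touches the bookkeeping you flag as the main obstacle. Your constructive inverse would also work once the pole-order check is done carefully, and it has the virtue of identifying the image explicitly, but for the purposes of the lemma the paper's argument is both shorter and more robust.
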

\begin{proof}
The map $q_{-\theta}^0$ identifies $\overline{\mathcal{W}}_{-\theta}^0$ with the open subset $\mathring{\mathcal{Z}}^{\theta} \subset \mathcal{Z}^{\theta}$, and $s_{-\theta, \theta}^0$ becomes a section of $q_{0}^\theta$ over $\mathring{\mathcal{Z} }^{\theta}$ upon this identification. Since both $\overline{\mathcal{W}}_{-\theta}^0$ and $\overline{\mathcal{W}}_{0}^\theta$ are integral scheme of the same dimension, $s_{-\theta, \theta}^0$ is an open embedding.
\end{proof}

\subsection{Integrable system on Zastava}

We recall the integrable system on Zastava space. For any $\alpha$ in the coweight lattice of $G$, $\alpha=\sum_{1\leq i\leq l}\alpha_i \varpi_i^\vee$ with $\alpha_i$ be integers, we define $\mathbb{A}^\alpha:=\prod_{1\leq i \leq l}\mathbb{A}^{\alpha_i}$. It is called the configuration space of colored divisors of multidegree $\alpha$. The integrable system on Zastava space is a morphism $\pi: \mathcal{Z}^\alpha \rightarrow \mathbb{A}^\alpha$, defined by mapping a quasimap $\left\{\mathcal{L}_\chi \subset\right.$ $\left.V_\chi \otimes \mathcal{O}_{\mathbb{P}^1}\right\}_{\chi \in \check{\Lambda}_G^{+}}$ to the colored divisor $\mathcal{L}_\chi \subset L_\chi \otimes \mathcal{O}_{\mathbb{P}^1}$, where $L_\chi$ is the highest weight line of the irreducible $G$-module $V_\chi$ of highest weight $\chi$, and the inclusion $\mathcal{L}_\chi \hookrightarrow L_\chi \otimes \mathcal{O}_{\mathbb{P}^1}$ is the composition of the projection $V_\chi \rightarrow L_\chi$ and the inclusion of subsheaf $\mathcal{L}_\chi \subset V_\chi \otimes \mathcal{O}_{\mathbb{P}^1}$.
It is known that the fibers of $\pi: \mathcal{Z}^\alpha \rightarrow \mathbb{A}^\alpha$ have dimension $\langle\alpha, \rho\rangle$, which is half of the dimension of $\mathcal{Z}^\alpha$, and the Poisson structure vanishes on the subalgebra $\pi^*\left(\mathbb{C}\left[\mathbb{A}^\alpha\right]\right) \subset \mathbb{C}\left[\mathcal{Z}^\alpha\right]$.
In particular $\pi: \mathcal{Z}^\alpha \rightarrow \mathbb{A}^\alpha$ is an integrable system. 
\begin{lma}\label{lem2.7}
There is an isomorphism $\overline{\mathcal{W}^\theta_0}\cong\overline{\mathcal{O}_{\text{min}}}$  given by extending the exponential map 
exp: $\mathcal{O}_{\text{min}}\rightarrow G(\mathbb{C}((t)))\rightarrow Gr_G(\mathbb{C})$ .
\end{lma}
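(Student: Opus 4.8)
The plan is to write the map out explicitly, check that it is an injective $G$-equivariant morphism onto the affine open chart of $\overline{\mathrm{Gr}}^{\theta}_{G}$ that $\overline{\mathcal{W}}^{\theta}_{0}$ cuts out, and then invoke \cite{mal05} to promote this to an isomorphism.

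First I would set up the morphism. Every $x\in\overline{\mathcal{O}_{\min}}$ is nilpotent with order of nilpotency bounded uniformly on $\overline{\mathcal{O}_{\min}}$, so $\exp(x/t)$ is a truncated exponential --- a polynomial in $t^{-1}$ whose coefficients depend polynomially on $x$ --- and $x\mapsto[\exp(x/t)]$ is therefore a genuine morphism $\overline{\exp}\colon\overline{\mathcal{O}_{\min}}\to\mathrm{Gr}_{G}(\mathbb{C})=G(\mathbb{C}((t)))/G(\mathbb{C}[[t]])$ with $\overline{\exp}(0)=[1]$, the base point. Because $\exp(\mathrm{Ad}(g)x/t)=g\,\exp(x/t)\,g^{-1}$ and $g\in G\subset G(\mathbb{C}[[t]])$, we get $\overline{\exp}(\mathrm{Ad}(g)x)=g\cdot\overline{\exp}(x)$, so $\overline{\exp}$ intertwines the adjoint $G$-action with the action of $G$ by constant loops. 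It is injective: if $[\exp(x/t)]=[\exp(y/t)]$, then $\exp(y/t)^{-1}\exp(x/t)$ lies in $G(\mathbb{C}[t^{-1}])\cap G(\mathbb{C}[[t]])=G(\mathbb{C})$, and this constant must be the identity since it becomes the identity on setting $t^{-1}=0$; then $x=y$. Consequently $\overline{\exp}(\mathcal{O}_{\min})$ is a $G$-orbit of dimension $\dim\mathcal{O}_{\min}=2h^{\vee}-2=\langle 2\rho^{\vee},\theta\rangle=\dim\mathrm{Gr}^{\theta}_{G}$; a rank-one computation on the $\mathfrak{sl}_{2}$-triple through the highest root vector $e_{\theta^{\vee}}$ (where $\exp(t^{-1}e)$ has elementary divisors $t^{-1},t$) shows $\overline{\exp}(e_{\theta^{\vee}})\in\mathrm{Gr}^{\theta}_{G}$, and $\mathrm{Gr}^{\theta}_{G}$ is $G$-stable, so $\overline{\exp}(\mathcal{O}_{\min})$ is a dense $G$-orbit of the irreducible variety $\mathrm{Gr}^{\theta}_{G}$. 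Since $\theta$ is quasi-minuscule, $\overline{\mathrm{Gr}}^{\theta}_{G}=\mathrm{Gr}^{\theta}_{G}\sqcup\{[1]\}$, and by definition $\overline{\mathcal{W}}^{\theta}_{0}=\overline{\mathrm{Gr}}^{\theta}_{G}\cap(K_{1}\cdot[1])$ is the intersection with the big cell, a $G$-stable affine open neighbourhood of $[1]$; a dense $G$-orbit must meet, hence lie inside, this cell, so $\overline{\exp}$ is in fact a morphism $\overline{\mathcal{O}_{\min}}\to\overline{\mathcal{W}}^{\theta}_{0}$ carrying $0$ to $[1]$.

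Then I would conclude using \cite{mal05}, which supplies a $G$-equivariant isomorphism $\Phi\colon\overline{\mathcal{O}_{\min}}\stackrel{\sim}{\to}\overline{\mathcal{W}}^{\theta}_{0}$. Since $0$ is the unique $G$-fixed point of $\overline{\mathcal{O}_{\min}}$ and $[1]$ is $G$-fixed in $\overline{\mathcal{W}}^{\theta}_{0}$, necessarily $\Phi(0)=[1]$, so $\Phi$ restricts to an isomorphism $\mathcal{O}_{\min}\stackrel{\sim}{\to}\mathcal{W}^{\theta}_{0}$ of the open orbits. Then $\Phi^{-1}\circ\overline{\exp}$ is a $G$-equivariant endomorphism of $\overline{\mathcal{O}_{\min}}$ fixing $0$, and its restriction to $\mathcal{O}_{\min}$ is a $G$-equivariant injective self-map of the homogeneous space $\mathcal{O}_{\min}$, hence bijective, hence an automorphism of the smooth variety $\mathcal{O}_{\min}$. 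Therefore $\Phi^{-1}\circ\overline{\exp}$ is a bijective morphism of the normal variety $\overline{\mathcal{O}_{\min}}$ onto itself, hence an isomorphism by Zariski's main theorem, and $\overline{\exp}=\Phi\circ(\Phi^{-1}\circ\overline{\exp})$ is the asserted isomorphism. Everything here is formal except the input from \cite{mal05}: concretely, that the dense $G$-orbit $\overline{\exp}(\mathcal{O}_{\min})$ already exhausts $\mathcal{W}^{\theta}_{0}$ --- equivalently, that the affine chart $\overline{\mathcal{W}}^{\theta}_{0}$ of $\overline{\mathrm{Gr}}^{\theta}_{G}$ has only the two $G$-orbits $\overline{\exp}(\mathcal{O}_{\min})$ and $\{[1]\}$ --- which is the minimal-degeneration computation of \cite{mal05}. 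I expect that to be the one genuine obstacle; reproving it from scratch, say by a Bia\l{}ynicki-Birula analysis of the attracting set of $[1]$ in $\overline{\mathrm{Gr}}^{\theta}_{G}$, would be considerably more work than citing \cite{mal05}.
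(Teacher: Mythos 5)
Your proposal is correct and takes essentially the same route as the paper: the paper's own proof is a citation to \cite{mal05} (Lemma 2.3.3 there, following the Beilinson--Drinfeld construction, which builds the isomorphism precisely as the extended exponential with inverted parameter), and your argument likewise defers to \cite{mal05} for the one non-formal input, namely that $\mathcal{W}^\theta_0$ is a single $G$-orbit exhausted by the image of $\overline{\exp}$. The additional verifications you carry out --- that $\overline{\exp}$ is a well-defined, injective, $G$-equivariant morphism whose image of $\mathcal{O}_{\min}$ is a dense $G$-orbit of $\mathrm{Gr}^\theta_G$ lying in the big cell $K_1\cdot[1]$ --- are all correct (indeed $\exp(xz^{-1})\in K_1$ directly, so the image lands in the slice by construction) and usefully spell out what the paper leaves entirely to the reference.
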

\begin{proof}
Lusztig (\cite{lus81}, pp.175) first pointed out this isomorphism in type A in an equivalent way. For a general proof, see \cite[lemma 2.3.3]{mal05}. The proof uses a construction in  \cite[pp. 182-183]{bei91}, where one can see the isomorphism is defined by taking exponent on $\mathcal{O}_{\text{min}}$, but the parameter $t$ is chosen to be inverse so that the image of the exp map will fall in the transversal slice.  
\end{proof}

To summarize, we have the following commutative diagram, where the horizontal line are embeddings.

\begin{equation}\label{miao}
\centering
\begin{tikzcd}
\overline{\mathcal{W}^0_{-\theta}}\cong\mathring{\mathcal{Z}}^\theta\ar[r,hook," s_{-\theta, \theta}^0"  ]\ar[drr]& \overline{\mathcal{W}^\theta_0} \overset{\exp}{\stackrel{\cong}{\longleftarrow}}\overline{\mathcal{O}_{\text{min}}}\ar[dr]\ar[r,hook,"q^\theta_0"]&\mathcal{Z}^\theta\ar[d,"\pi"]\\&  &\mathbb{A}^\theta
\end{tikzcd}
\end{equation}

We will compute the integrable system on $\overline{\mathcal{O}}_{\text{min}}$ induced form $\pi$, using the following lemma:
\begin{lma}\label{2.9}
The integrable system $\pi$ is given by the equations of Schubert divisors $\{X_j\}_j$ of the flag variety $G/B^-$, where the fundamental weights are labelled by $j$,  $X_j=B^+s_j B^-$, $s_j$ is the simple reflection. 
\end{lma}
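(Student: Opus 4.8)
The plan is to unwind the definition of $\pi$ on the dense open locus $\mathring{\mathcal{Z}}^\theta\subset\mathcal{Z}^\theta$ of honest maps and then recognise its components as pulled-back equations of Schubert divisors. A point of $\mathring{\mathcal{Z}}^\theta$ is a map $\phi\colon\mathbb{P}^1\to\mathcal{B}$ of degree $\theta$ with $\phi(\infty)=\mathfrak{b}_-$, and the defining line subbundles are $\mathcal{L}_{\varpi_j}=\phi^{*}\mathcal{S}_{\varpi_j}$, where $\mathcal{S}_{\varpi_j}\subset V_{\varpi_j}\otimes\mathcal{O}_{\mathcal{B}}$ is the fixed $G$-equivariant line subbundle underlying the modular description of $\mathcal{Z}^\theta$ (with fibre at each point the corresponding canonical line in $V_{\varpi_j}$). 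By construction the $j$-th component of $\pi$ sends $\phi$ to the zero divisor of $\phi^{*}(s_j)$, where
\[
s_j\colon \mathcal{S}_{\varpi_j}\hookrightarrow V_{\varpi_j}\otimes\mathcal{O}_{\mathcal{B}}\twoheadrightarrow L_{\varpi_j}\otimes\mathcal{O}_{\mathcal{B}}\cong\mathcal{O}_{\mathcal{B}}
\]
is the composite of the tautological inclusion with the projection onto the highest weight line, regarded as a global section of the ample line bundle $\mathcal{L}_{\varpi_j}:=\mathcal{S}_{\varpi_j}^{\vee}$, the generator of $\Pic(\mathcal{B})$ attached to $\varpi_j$. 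Thus $\pi_j(\phi)=\phi^{*}(\mathrm{div}\, s_j)$; since $\phi(\infty)=\mathfrak{b}_-$ lies in the big Bruhat cell, on which $s_j$ is nonzero, this is an effective divisor on $\mathbb{A}^1=\mathbb{P}^1\setminus\{\infty\}$ of degree $\langle\theta,\varpi_j\rangle$, matching the factor $\mathbb{A}^{\langle\theta,\varpi_j\rangle}$ of $\mathbb{A}^\theta$. It remains to identify $\mathrm{div}\, s_j$ with the Schubert divisor $X_j$.

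For this I would argue that $s_j$ is a Borel eigensection. Under the Borel--Weil identification $H^0(\mathcal{B},\mathcal{L}_{\varpi_j})\cong V_{\varpi_j}^{*}$ the section $s_j$ is the coordinate covector dual to the highest weight vector $v_{\varpi_j}$; this covector has weight $-\varpi_j$, hence is an extremal weight vector of $V_{\varpi_j}^{*}$, hence fixed up to scalar by a Borel subgroup. Consequently $\mathrm{div}\, s_j$ is an effective $B$-stable divisor, so $\mathrm{div}\, s_j=\sum_k a_k X_k$ with $a_k\ge 0$, the $X_k$ ($1\le k\le l$) being the codimension-one Schubert subvarieties of $\mathcal{B}\cong G/B^-$. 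Comparing classes in $\Pic(\mathcal{B})=\bigoplus_k\mathbb{Z}[X_k]$ and using $c_1(\mathcal{L}_{\varpi_j})=[X_j]$ forces $a_k=\delta_{jk}$, so $\mathrm{div}\, s_j=X_j$ with multiplicity one. Therefore $\pi_j(\phi)=\phi^{-1}(X_j)$ on $\mathring{\mathcal{Z}}^\theta$, i.e.\ the components of $\pi$ are exactly the pulled-back equations $\phi^{*}(s_j)$ of the Schubert divisors $X_j=\overline{B^+ s_j B^-}$. As $\mathring{\mathcal{Z}}^\theta$ is dense in $\mathcal{Z}^\theta$ and $\pi$ is a morphism, the same formula — now with the defect divisor of a quasi-map added in — persists on all of $\mathcal{Z}^\theta$.

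The step I expect to cost the most effort, and hence the main obstacle, is the bookkeeping of positivity conventions: fixing which Borel trivialises $\mathcal{B}$, whether $\mathcal{S}_{\varpi_j}$ restricts to the highest- or lowest-weight line at $\phi(\infty)$, and correspondingly whether $s_j$ is a $B^+$- or $B^-$-eigensection, so that $\mathrm{div}\, s_j$ comes out literally as $X_j=B^+ s_j B^-$ rather than a $w_0$-twist of it; together with the routine but necessary checks that $\mathcal{L}_{\varpi_j}$ is indeed the ample generator indexed by $\varpi_j$ and that $X_j$ is the unique $B^+$-stable effective divisor in its linear system (which is what gives the multiplicity-one statement). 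Once these are pinned down consistently with Definition \ref{wyh} and the construction of $\pi$ recalled above, the identification is immediate.
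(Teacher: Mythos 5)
Your proposal is correct and follows essentially the same route as the paper: both identify the $j$-th component of $\pi$ with the pullback of the canonical section of $\mathcal{L}_{\varpi_j}$ on $G/B^-$ (the matrix coefficient $g\mapsto\langle v^j,gv_j\rangle$), and both pin down the multiplicity via the Picard group. The only cosmetic difference is that you get support and multiplicity one from $B$-stability of the divisor of an extremal weight section together with $c_1(\mathcal{L}_{\varpi_j})=[X_j]$, whereas the paper checks vanishing/non-vanishing on the cells $B^+s_kB^-$ directly and then re-derives the class identity by comparing the two bases $\{\chi_k\}$ and $\{[X_k]\}$ of $\mathrm{Pic}(G/B^-)$.
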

\begin{proof}
To determine $\pi$, it is enough to associate the system $\{\mathcal{L}_{\varpi_i} \subset V_{\varpi_i} \otimes \mathcal{O}_{\mathbb{P}^1}\}_{\varpi_i \in \check{\Lambda}_G^{+}}$ to the colored divisor  $\mathcal{L}_{\varpi_i}\subset L_{\varpi_i}\otimes \mathcal{O}_{\mathbb{P^1}}$ . 
For each fundamental representation $V_{\varpi_i}$ of $G$,  we can associate it a trivial bundle $\mathcal{V}_i$.
Each point $[gB^-]\in G/B^-$ gives a highest weight vector $L_{\varpi_i}[gB^-]$ of $V_{\varpi_i}$ . Denote the highest weight vector in $V_{\varpi_i}$ with respect to $B^-$ by $L_{\varpi_i}$, then the map $\{L_{\varpi_i}[gB^-]\hookrightarrow V_{\varpi_i} \rightarrow L_{\varpi_i}\}_{\{gB^-\in \mathbb{P}^1\hookrightarrow G/B^-\} }$ gives an ideal $I_i$  of the structure sheaf of vanishing locus $\mathcal{L}_{\varpi_i}\hookrightarrow L_{\varpi_i}\otimes \mathcal{O}_{\mathbb{P^1}}$.  This map vanishes on the divisor $B^+s_iB^-$, and is isomorphism on the divisor $B^+s_jB^-$ for $j\neq i$. Therefore the divisor only vanish on the $i$ -th Schubert divisor. A comparison of degree in the following shows that this ideal is the Schubert divisor itself. \\
The degree of $I_i$ is computed by its class as the line bundle $\mathcal{L}_{\varpi_i}$ in the Picard group $\mathrm{Pic}\,G/B^-$, which is isomorphic to the $G$-equivariant Picard group 
$\mathrm{Pic}_G\,G/B^-$. However, we have $\mathrm{Pic}_G\,G/B^-\cong \mathrm{Pic}\,[\mathrm{pt}/B^-]\cong \mathrm{Hom}(B^-,\mathbb{G}_m)$, where   $[\mathrm{pt}/B^-]$ is the classifying stack of $B$, which is isomorphic to the character group. 
The image of $\mathcal{L}_{\varpi_i}$ under these isomorphisms are given by noticing that this line bundle is $B^-$ invariant, its fiber at point $[gB^-]$ is a dimension-one $B^-$-representation, which is a character of $B^-$, we regard this character as the degree of $\mathcal{L}_{\varpi_i}$, denoted by $\chi_i$. All $\{\chi_i\}_i$ form a basis of the Picard group. \\On the other hand, there is a homomorphism from divisors on $G/B^-$ to its Picard group. Denote the subgroup spanned by Shubert divisors in $\mathrm{Div}(G/B^-)$ by $S_{G/B^-}$.  
Since the complement of the union of Schubert divisors $G/B^--\cup_i X_i$  is an affine sapce,  the restriction homomorphism $S_{G/B^-}\to\mathrm{Pic}\, G/B^-$ is surjective. Observe that $\mathrm{rank} (S_{G/B^-})=\mathrm{rank} (\mathrm{Pic}\,G/B^-)$, therefore the image of Schubert divisors $X_i$ forms another basis in the Picard group. \\
Since each $\chi_i$ is supported on $X_i$, there exists a positive integer $n$ such that $\chi_i=n [X_i]$  in the image of $\mathrm{Div}\,(G/B^-)$ in $\mathrm{Pic}\,(G/B^-)$. Notice that both $\{\chi\}_i$ and $[X_i]$ are basis of $\mathrm{Pic}\,(G/B^-)$, we have $n=1$.

\end{proof}

\subsection{Compare Poisson structure on generalized slice and minimal orbit}

\begin{prop}\label{2.10}
The exponential map $\exp:\,\overline{\mathcal{O}_{min}}\to \mathrm{Gr}_G(\C)$ is Poisson, with the Poisson structure on the left side be the Kirillov-Kostant strucuture, and on the right side is the Poisson structure induced by the Manin triple $\mathfrak{g}((t^{-1})),t^{-1}\mathfrak{g}[[t^{-1}]],\mathfrak{g}[t])$, with the bilinear form on $\mathfrak{g}((t^{-1}))$ defined by 
\[(f(t),g(t)):=-\mathrm{Res}_{t\to 0}\langle f(t),g(t)\rangle\]
where $\langle,\rangle$ is the Killing form on $\mathfrak{g}$.
\end{prop}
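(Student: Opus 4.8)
The plan is to reduce the statement to a computation with the Kirillov--Kostant bracket on $\mathcal{O}_{\min}$ and the $r$-matrix bracket on $\mathrm{Gr}_G$, and then to exploit $G$-equivariance to pin down the Poisson tensor from its value at a single point. First I would recall that the exponential map factors as $\mathcal{O}_{\min}\hookrightarrow t^{-1}\mathfrak{g}[[t^{-1}]]\xrightarrow{\exp} G_1[[t^{-1}]]\hookrightarrow \mathrm{Gr}_G(\C)$, where in the first arrow a nilpotent $x\in\mathcal{O}_{\min}$ is sent to $x\,t^{-1}$ (note the inverse parameter, as flagged in Lemma~\ref{lem2.7}); since $x$ is nilpotent, $\exp(xt^{-1})$ is a finite sum and lands in the first congruence subgroup $K_1$, hence in $\overline{\mathcal{W}}^\theta_0\subset\mathrm{Gr}_G$. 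The key structural input is that this map is $G$-equivariant for the adjoint action on the source and the action of constant loops $G\subset G[[t^{-1}]]$ on the target; this is Proposition~\ref{2.10}'s companion statement, cited from Beilinson--Drinfeld \cite[pp.\ 182--183]{bei91}, and I would invoke it directly.

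The main body of the argument is then the following. Both Poisson structures are $G$-invariant: the Kirillov--Kostant bracket on a coadjoint/adjoint orbit is $G$-invariant by construction, and the $r$-matrix bracket on $\mathrm{Gr}_G$ attached to the Manin triple $(\mathfrak{g}((t^{-1})),\,t^{-1}\mathfrak{g}[[t^{-1}]],\,\mathfrak{g}[t])$ is invariant under the constant loop subgroup because that subgroup normalizes both isotropic subalgebras and preserves the residue pairing $(f,g)=-\Res_{t\to 0}\langle f,g\rangle$. Therefore both Poisson bivectors on $\overline{\mathcal{O}_{\min}}$ (the given one and the pullback under $\exp$) are $G$-invariant sections of $\wedge^2 T$, and by equivariance it suffices to check that they agree at one point of each $G$-orbit --- concretely at the highest root vector $e_{\theta^\vee}\in\mathcal{O}_{\min}$ (the cone point $0$ is handled by continuity, or trivially since both tensors vanish there). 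At $e_{\theta^\vee}$ the tangent space is $\mathfrak{g}/\mathfrak{g}^{e_{\theta^\vee}}$, spanned by the images of the root vectors $e_\alpha$ with $\langle\alpha,\theta^\vee\rangle<0$ together with a Cartan direction, and the Kirillov--Kostant tensor is the standard one: $\{X,Y\}(e_{\theta^\vee})=\langle e_{\theta^\vee},[X,Y]\rangle$ for $X,Y\in\mathfrak{g}$ viewed as linear functions. On the other side I would compute $d\exp$ at $e_{\theta^\vee}t^{-1}$ and transport the $r$-matrix bracket: since $e_{\theta^\vee}$ is nilpotent, $\exp(e_{\theta^\vee}t^{-1})=1+e_{\theta^\vee}t^{-1}+\tfrac12 e_{\theta^\vee}^2t^{-2}+\cdots$, and the tangent vectors are obtained by left-translating elements of $t^{-1}\mathfrak{g}[[t^{-1}]]$; the skew form $r$ is the projection onto $t^{-1}\mathfrak{g}[[t^{-1}]]$ minus the projection onto $\mathfrak{g}[t]$ along the decomposition, so the bracket of two such tangent vectors reduces, after taking residues, to $\langle e_{\theta^\vee},[X,Y]\rangle$ as well. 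Matching these two expressions is the heart of the proof.

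The step I expect to be the genuine obstacle is the explicit identification of the pullback of the $r$-matrix bracket with the Kirillov--Kostant bracket at the base point --- that is, keeping careful track of (i) the sign and normalization coming from $(f,g)=-\Res_{t\to0}\langle f,g\rangle$, (ii) the fact that the parameter is $t^{-1}$ rather than $t$, which swaps the roles of the two isotropic halves relative to the naive convention, and (iii) the passage from the group-level Poisson structure on $\mathrm{Gr}_G$ (defined via the Manin triple and the quotient $G((t^{-1}))/G[t]$) to a Lie-algebraic formula on tangent spaces via $d\exp$. Once the base-point computation is done, invariance and equivariance upgrade it to the global statement, and the extension to the closure $\overline{\mathcal{O}_{\min}}$ is automatic because both sides are regular Poisson structures on an affine variety agreeing on the dense orbit. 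I would also remark that, granting this, Lemma~\ref{lem2.7} identifies $\overline{\mathcal{W}^\theta_0}$ with $\overline{\mathcal{O}_{\min}}$ as Poisson varieties, which is exactly what feeds into the proof of the Main Theorem via diagram~\eqref{miao}.
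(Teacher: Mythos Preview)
Your strategy---reducing via $G$-equivariance of $\exp$ (from \cite{bei91}) and $G$-invariance of both brackets to a comparison of symplectic forms at a single base point---is exactly the scaffolding the paper uses. Where you diverge is at the base point itself: you propose an explicit residue computation matching the $r$-matrix bracket to the KKS bracket, and you correctly flag this as the delicate step. The paper bypasses that computation entirely with a uniqueness argument. Working at $x=E_{-\theta^\vee}$ (rather than $e_{\theta^\vee}$), it observes that the unipotent radical $U^-$ lies in the stabilizer $K=\ker(\theta^\vee\colon P_\theta\to\C^\times)$, that the tangent space $T_x$ as a $U^-$-module is cyclic with generator $E_{\theta^\vee}$ unique up to scalar, and hence---since the KKS form already furnishes a $U^-$-isomorphism $T_x\cong T_x^*$---that $\mathrm{Isom}_{U^-}(T_x,T_x^*)=\C^\times$. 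Thus any $K$-invariant symplectic form on $T_x$ is proportional to the KKS one, and the two Poisson structures agree up to a global nonzero scalar. Your approach, if carried through, would pin down that scalar; the paper's argument trades this precision for not having to track the signs and normalizations you list under (i)--(iii), and proportionality is in fact all that is invoked downstream.
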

\begin{proof}

Let $\theta$ be the coweight of $\mathfrak{g}=\mathrm{Lie}\,G$ corresponding to the maximal root $\theta^\vee$ of $\mathfrak{g}$.  The image of this exponential map is the closure of the orbit of affine grassmannian associated to quasi-minuscule coweight $\theta$, we denote it as  $\overline{\mathrm{Gr}^\theta}$, and $\mathrm{Gr}^\theta=G[[t]]\cdot t^\theta =G/P_\theta^- $ is a projective variety, where $P_\theta^-=\{g\in G| t^{-\theta}g t^\theta\in G[[t]]\}$ is the parabolic subgroup of G such that $\mathrm{Lie}\,P_\theta^-$ is the sum of Cartan and the root spaces corresponding to roots $\alpha$ with $(\alpha,\theta)\leq 0$.  

We know $\mathrm{Gr}^\theta$ is a Poisson subvariety of $\mathrm{Gr}_G(\C)$, and from \cite[pp.183]{bei91} we know that the exponential map is $G$-equivariant, so it will induce a $G$-invariant map on tangent spaces.
we consider the tangent space at the nagative maximal root vector $x=E_{-\theta^\vee}$ in the nilpotent orbit. The group $G$ acts on the tangent space $T_x$ of the minimal nilpotent orbit at $x$, and its stablizer $K$ is the kernel of the character $\theta^\vee:\,P_\theta\to\C^\times$. In particular,
the unipotent $U^-$ (radical of Borel) is in the stabilizer $K$ and also in the stabilizer $P_\theta ^-$. Consider $T_x$ as a $U^-$-module, it is generated by the highest root vector $E_{\theta^\vee}$, and $E_{\theta^\vee}$ is the unique such generator up to scaling.  Since we know there are $K$-invariant symplectic structure on $T_x$ (namely the KKS structure), then as a $U^-$-module, $T_x\cong T_x^*$, so $\mathrm{Isom}_{U^-}(T_x,T_x^*)=\C^\times$, in particular any $K$-invariant symplectic form must be proportional to this one. 
Consider the tangent space at the origin of $Gr^\theta$, we showed that the symplectic structure coming from the minimal nilpotent orbit and the symplectic structure from the Manin triple differs by a scalar multiplication. Since the LHS and RHS of the exponential map are both $G$-homogeneous space,  the symplectic structure at tangent space implies that it is a Poisson map.

\end{proof}

\section{Computation of Hamiltonians for every type}\label{s4}

\subsection{Chevalley basis}

In this subsection, we will compute the Hamiltonians on minimal orbit for complex simple Lie algebra for every type  in terms of Chevalley basis. 

From lemma \ref{2.9}, the Hamiltonian functions of our integrable system on minimal nilpotent orbit $\mathcal{O}_{min}$ are given by the equations of Schubert divisors,   For opposite Borel $B^{-}$ in $G$, denote Schubert divisors of $G/B^-$ by $X_i:=B^{+} s_i B^{-}$,
$v_i$ be a vector of the $B^{-}$-stable line in $V_{\varpi_i}$, $v^i$ be a vector in the $B^{+}$-stable line of the dual representation $\left(V_{\varpi_i}\right)^*$, and $\langle,\rangle$ is the natural pairing. 
It was shown in \cite{bern82} that the equations of Schubert divisors $X_i$ are the zeroes of functions $g \mapsto\left\langle  v^i, g v_i\right\rangle$ on $G$ (see also \cite[pp.237]{cg}). Let $x \in \mathcal{O}_{\min }=\mathrm{Ad}_G \cdot e_{\theta^\vee}$, by lemma \ref{2.9} and proposition \ref{2.10}, the Hamiltonians of the integrable system on $\mathcal{O}_{\min }$ are the coefficients of $t^{-n}\,(n\geq1)$ for the functions $x\mapsto\langle v^i,\exp (t^{-1} x)v_i\rangle$, where $1\leq i\leq l=\mathrm{rank}(\mathfrak{g})$, and we call the coefficients of $t^{-n}$ be \emph{Hamiltonians of order-$n$}.  More explicitly, we shall derive the coefficient of $t^{-n}$ of the equations $x\mapsto \langle v^i,\mathrm{exp}\,(t^{-1}x)v_i\rangle$ , where $x\in\mathfrak{g}$ is expanded in a set of Chevalley basis. The order-$1$ Hamiltonians are called linear, and order-$2$ are called quadratic. Notice when order higher than $1$ they may vanish when restricted to minimal nilpotent orbit. In next subsection we will use another coordinate to calculate and show when do they vanish. 
 \\ \hspace*{\fill}\\
 
\begin{prop}\label{xiaobaobao}
\begin{enumerate}[label=(\arabic*)]
\item The linear Hamiltonians are the linear form $\varpi_i\in \mathbb{C}[\mathfrak{g}]$.
 \item  \label{2222}The quadratic Hamiltonians are $\varpi_i^2+\sum_{\alpha\in R^+-R_{P_i}}\langle \varpi_i,\alpha^\vee\rangle e^*_\alpha f^*_\alpha\in\mathbb{C}[\mathfrak{g}]$.
 \item The Hamiltonians of any order associated to $\varpi_i$  lie in $\mathbb{C}[\mathfrak{g}]^{L_i}\,\mathrm{mod}\langle \mathbb{C}[\mathfrak{g}]^G_+\rangle$ .
 
 \item When the fundamental weight $\varpi_i$ is cominuscule, then the quadratic Hamiltonians are the dual Casimir element of $\mathfrak{g}$ minus the dual Casimir element of $L_i$. \end{enumerate}
\end{prop}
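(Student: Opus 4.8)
By Lemma \ref{2.9} and Proposition \ref{2.10}, the order-$n$ Hamiltonian attached to $\varpi_i$ is, up to an inessential nonzero scalar, the polynomial $H_{n,i}(x)=\langle v^i,x^nv_i\rangle$ on $\mathfrak{g}$, where $x$ acts through the fundamental representation $V_{\varpi_i}$ and we normalise $\langle v^i,v_i\rangle=1$. Writing $x=x_{\mathfrak{h}}+\sum_{\alpha\in R}x_\alpha e_\alpha$ in the Chevalley basis (so $x_\alpha=e_\alpha^*(x)$ and $f_\alpha^*(x)=x_{-\alpha}$ for $\alpha\in R^+$), everything reduces to extracting the $\mathrm{wt}(v_i)$-weight component of $x^n v_i$. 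For $n=1$: since $v_i$ spans an extremal weight line, only $x_{\mathfrak{h}}$ can return $v_i$ to that line, whence $H_{1,i}(x)=\langle\mathrm{wt}(v_i),x_{\mathfrak{h}}\rangle=\varpi_i(x)$, the composite $\mathfrak{g}\twoheadrightarrow\mathfrak{h}\xrightarrow{\varpi_i}\mathbb{C}$; this is (1). For $n=2$: a product $e_\gamma e_{\gamma'}$ shifts the weight by $\gamma+\gamma'$ and Cartan factors do not shift it, so the only surviving contributions to the $\mathrm{wt}(v_i)$-component of $x^2v_i$ are the Cartan--Cartan term $\langle\mathrm{wt}(v_i),x_{\mathfrak{h}}\rangle^2 v_i=\varpi_i(x)^2v_i$ and the diagonal root terms $x_\gamma x_{-\gamma}\,e_\gamma e_{-\gamma}v_i$. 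Because $v_i$ is annihilated by the nilradical on its side, one of $e_\gamma v_i$, $e_{-\gamma}v_i$ vanishes, and the $\mathfrak{sl}_2$-relation $[e_\gamma,e_{-\gamma}]=\gamma^\vee$ yields $e_\gamma e_{-\gamma}v_i=\langle\varpi_i,\gamma^\vee\rangle v_i$ after the sign bookkeeping. Since $\langle\varpi_i,\alpha_j\rangle=0$ for $j\neq i$, the coefficient $\langle\varpi_i,\gamma^\vee\rangle$ vanishes whenever $\gamma\in R_{P_i}$, so only $\gamma\in R^+\setminus R_{P_i}$ survives; this gives (2).

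\textbf{Part (3).} The lines $\mathbb{C}v_i\subset V_{\varpi_i}$ and $\mathbb{C}v^i\subset V_{\varpi_i}^*$ are extremal in opposite directions, so their stabilisers in $G$ form a pair of opposite maximal parabolics with common Levi $L_i$; in particular $L_i$ preserves both lines, say acting on them by characters $a$ and $b$. Using $G$-invariance of the pairing, for $g\in L_i$ we get $H_{n,i}(\mathrm{Ad}_g x)=\langle v^i,gx^ng^{-1}v_i\rangle=a(g)^{-1}\langle v^i,gx^nv_i\rangle=a(g)^{-1}b(g)^{-1}H_{n,i}(x)$. Taking $n=0$, $x=0$ and using $\langle v^i,v_i\rangle=1$ forces $a(g)b(g)=1$, hence $H_{n,i}\in\mathbb{C}[\mathfrak{g}]^{L_i}$ for every $n$; in particular $H_{n,i}$ represents an element of $\mathbb{C}[\mathfrak{g}]^{L_i}$ in the quotient by $\langle\mathbb{C}[\mathfrak{g}]^G_+\rangle$, as claimed in (3). (This invariance is exactly what Section \ref{ppl} uses to decide which Hamiltonians survive restriction to $\mathcal{O}_{\min}$.)

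\textbf{Part (4) and the main obstacle.} If $\varpi_i$ is cominuscule then $\mathfrak{g}=\mathfrak{u}_{P_i}^-\oplus\mathfrak{l}_i\oplus\mathfrak{u}_{P_i}$ is a short grading with $\mathfrak{u}_{P_i}$ abelian and irreducible as an $L_i$-module. By (2) and (3), the root part $\sum_{\alpha\in R^+\setminus R_{P_i}}\langle\varpi_i,\alpha^\vee\rangle e_\alpha^* f_\alpha^*$ of the quadratic Hamiltonian is an $L_i$-invariant pairing $\mathfrak{u}_{P_i}\otimes\mathfrak{u}_{P_i}^-\to\mathbb{C}$, so by Schur's lemma (using $L_i$-irreducibility of $\mathfrak{u}_{P_i}$) it is a scalar multiple of the restriction of the Casimir pairing of $\mathfrak{g}$, i.e. of the root part of $\mathrm{Cas}_{\mathfrak{g}}-\mathrm{Cas}_{\mathfrak{l}_i}$; and the Cartan part $\varpi_i^2$ is the restriction of $\mathrm{Cas}_{\mathfrak{g}}-\mathrm{Cas}_{\mathfrak{l}_i}$ to $\mathfrak{h}$, since that difference annihilates $\mathrm{span}(\alpha_j^\vee:j\neq i)$ and is supported on the line $\mathfrak{z}(\mathfrak{l}_i)=\mathbb{C}\varpi_i^\vee$. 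Comparing one coefficient, say that of $e_{\theta^\vee}^*f_{\theta^\vee}^*$, pins the scalars and gives the identity in (4). The one genuinely computational step in the whole proposition is the evaluation behind (2): tracking which products of Chevalley generators land back on the extremal weight line and computing $e_\gamma e_{-\gamma}v_i$ exactly, together with the signs and the harmless $w_0$-twist relating the index $i$ to its dual. Once (2) is in hand, (1) is a special case, (3) is the soft semi-invariance argument above, and (4) is the reorganisation just described, whose only delicate point is matching the normalisations of the two Casimir elements.
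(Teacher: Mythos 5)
Your proposal is correct, and for the computational core — parts (1) and (2) — it is the same argument as the paper's: expand $x$ in the Chevalley basis, observe that only weight-preserving words in the generators can return $v_i$ to the extremal line, and evaluate $h_\alpha^2 v_i$ and $e_\gamma e_{-\gamma}v_i=\langle\varpi_i,\gamma^\vee\rangle v_i$ via the $\mathfrak{sl}_2$-relation, with $R_{P_i}$ dropping out because $\langle\varpi_i,\alpha\rangle=0$ there. The two places where you diverge are (3) and (4). For (3) the paper asserts literally that $L_i v_i=v_i$ (justified via the Bruhat decomposition of $L_i$ and $\mathrm{Stab}_W(\varpi_i)=W_{P_i}$); taken at face value this is too strong, since the centre of $L_i$ — indeed already the torus $T$ — acts on $v_i$ by the nontrivial character $\varpi_i$. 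Your version, in which $L_i$ acts on the two extremal lines by characters $a$ and $b$ whose product is forced to be $1$ by $\langle v^i,v_i\rangle=1$, is the correct repair and is the argument one should keep. For (4) the paper is more direct: it simply notes that for cominuscule $\varpi_i$ the coefficients $\langle\varpi_i,\alpha^\vee\rangle$ on $R^+\setminus R_{P_i}$ are all equal (to $1$), so formula (2) visibly becomes the difference of the two dual Casimirs; your route through Schur's lemma and the $L_i$-irreducibility of the abelian nilradical $\mathfrak{u}_{P_i}$ reaches the same conclusion with more machinery, but it is arguably more robust, since it identifies the root part with the Casimir pairing without computing any individual coefficient (which matters in the non-simply-laced cases, where $\langle\varpi_i,\alpha^\vee\rangle$ can differ between long and short roots even for cominuscule $\varpi_i$). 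Both treatments of (4) leave the normalisation of $\mathrm{Cas}_{\mathfrak{l}_i}$ on the Cartan implicit, which is consistent with the informal phrasing of the statement.
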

\begin{proof}
\begin{enumerate}[label=(\arabic*)]
\item  Write elements in $\mathfrak{g}$ in the basis of the chosen Chevalley generators.
Each "$f$" in the triple draws $v_i$ to a lower weight space, which will not contribute when pairing with $v_i^\vee$, and $v_i$ is vanished under the action of "$e$". \\
To calculate the coefficient of $t^{-1}$ in $\langle \mathrm{exp}  (t^{-1}A) v_i,v^i\rangle$  , the only possible nonzero coefficient is contributed by these "$h_\alpha$", where $\alpha$ is a positive root. Note that
\[  h_\alpha v_{i}=\langle \varpi_i,\alpha^\vee\rangle v_i ,  \]

that is, the, we get the linear form $\varpi_i\in\mathbb{C}[\mathfrak{g}]$.

\item When we calculate $\langle(\mathrm{exp}(t^{-1}A) v_i,v^i\rangle$,  the second-order term can arise in two ways: first is to apply Cartan element $h_\alpha$ , i.e, the action of $h_\alpha^2$, the second is by first go to the lower weight space by $f_\alpha$ and then go up by $e_\alpha$.  
\[h_\alpha^2 v_i=\langle \varpi_i,  \alpha^\vee\rangle^2,\quad e_\alpha f_\alpha v_i=[e_\alpha, f_\alpha]v_i=\langle \varpi_i,\alpha^\vee\rangle v_i \]
Therefore we get the second Hamiltonian is $\varpi_i^2+\sum\langle \varpi_i,\alpha^\vee\rangle e_\alpha^* f_\alpha^*\in\mathbb{C}[\mathfrak{g}]$
where $\alpha$ runs over positive root $R^+$. Notice that for the maximal parabolic subgroup $P_i$ associated to fundamental weight $\varpi_i$, the associated root $R_{P_i}$ are linear combinations of $\alpha_j$ ($j\neq i)$, and for any $\alpha\in R_{P_i}$, $\langle \varpi_i, \alpha\rangle=0$. Hence we can sum only in $R^+-R_{P_i}$. 

\item
The fact that Hamiltonians associated with $\varpi_i$ lie in $\mathbb{C}[\mathfrak{g}]^{L_i}$is proved  by noticing the subrepresentation of the Levi factor $L_i$ on the fundamental representation $V_{\varpi_i}$ fixes the highest weight, i.e, $L_iv_i=v_i$.  Indeed, by Bruhat decomposition, it suffices to show the Borel $B_{L_i}$ and the Weyl group $W_{P_i}$ all fix $v_{i}$, which follows respectively by noticing that $B_{L_i}\subset B$ and $Stab_\Lambda(\varpi_i)=W_{P_i}$.\\
Moreover, recall for any $f\in \mathbb{C}[\mathfrak{g}]^G_+$, $f$ acts on any nilpotent element is zero. Hence we can modulo by $\langle\mathbb{C}[\mathfrak{g}]^G_+\rangle$. 

\item When $\varpi_i$ is cominuscule,  i.e,  the corresponding coweight $\varpi_i^\vee$ is minuscule,   $\langle \varpi_i,\alpha^\vee\rangle=1$ for $\alpha\in R^+-R_{P_i}$, which  gives the desired Casimir form.  In particular when $G$ is of type $A$, all quadratic Hamiltonians can be written as the substraction of two Casimirs. 

\end{enumerate}

\end{proof}

Next we give a formula for  Hamiltonians of any order.  Let $V$ be any fundamental representation with weight $\varpi$ of the complex simple Lie algebra $\mathfrak{g}$, let $v_m$, $v^m$ be respectively the highest weight vector and lowest weight vector. Pick a basis $(e_i)_{i=1,\cdots, l}$ of $\mathfrak{g}$,  recall we defined the Casimir element to be
    \[\mathrm{Cas}:=\sum_{i=1}^{l}e_i\otimes e_i^*\in U(g)\otimes U(g)^*,\]
    and we can write its power using Sweedler's notation:
    \[\mathrm{Cas}^n=\mathrm{Cas}^n_{(1)}\otimes\mathrm{Cas}_{(2)}^n\]

    \begin{prop}\label{145}
    For any $x\in\mathfrak{g}$, the order-$n$ Hamiltonians associated with fundamental weight $\varpi$ are the matrix coefficients 
    \begin{equation} \label{33}\langle v^m,x^nv_m\rangle=\mathrm{Cas}_{(2)}^n\langle\mathrm{Cas}_{(1)}^n v_m, v^m\rangle (x).\end{equation}
    
\end{prop}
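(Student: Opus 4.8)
The plan is to reduce the identity \eqref{33} to two purely formal manipulations. First, combining Lemmas~\ref{2.9} and~\ref{lem2.7} with Proposition~\ref{2.10}, the order-$n$ Hamiltonian associated with a fundamental weight $\varpi$ is by construction the coefficient of $t^{-n}$ in the function $x\mapsto\langle v^m,\exp(t^{-1}x)v_m\rangle$ on $\mathfrak{g}$. I would expand $\exp(t^{-1}x)=\sum_{k\ge 0}\tfrac{t^{-k}}{k!}x^k$, so that this coefficient equals $\tfrac{1}{n!}\langle v^m,x^nv_m\rangle$; since a Hamiltonian matters only up to a nonzero scalar (rescaling affects neither Poisson-commutativity nor algebraic independence), one may take the order-$n$ Hamiltonian to be the polynomial function $x\mapsto\langle v^m,x^nv_m\rangle\in\mathbb{C}[\mathfrak{g}]$. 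This is the left-hand side of \eqref{33}, so everything is reduced to matching it with the right-hand side.

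Next I would unwind the Sweedler notation on the right. With $(e_i)$ the chosen basis of $\mathfrak{g}$ and $(e_i^*)$ the dual basis, viewed as linear coordinates on $\mathfrak{g}$ (so that $x=\sum_i e_i^*(x)e_i$ for all $x$), the product $\mathrm{Cas}^n$ uses multiplication in $U(\mathfrak{g})$ on the first tensor slot and the commutative multiplication of $\mathbb{C}[\mathfrak{g}]=\mathrm{Sym}(\mathfrak{g}^*)$ on the second, whence
\[
\mathrm{Cas}^n=\sum_{i_1,\dots,i_n}e_{i_1}\cdots e_{i_n}\otimes e_{i_1}^*\cdots e_{i_n}^*=\mathrm{Cas}^n_{(1)}\otimes\mathrm{Cas}^n_{(2)}.
\]
The single observation driving the proof is that $\mathrm{Cas}^n$ is the ``universal $n$-th power'': evaluating its second tensor factor at a point $x\in\mathfrak{g}$ returns $x^n\in U(\mathfrak{g})$. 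Indeed, evaluation at $x$ is an algebra homomorphism $\mathrm{Sym}(\mathfrak{g}^*)\to\mathbb{C}$, so $(e_{i_1}^*\cdots e_{i_n}^*)(x)=\prod_j e_{i_j}^*(x)$, and therefore $\sum_{i_1,\dots,i_n}\big(\prod_j e_{i_j}^*(x)\big)e_{i_1}\cdots e_{i_n}=\big(\sum_i e_i^*(x)e_i\big)^n=x^n$; here it matters that the noncommutative monomials $e_{i_1}\cdots e_{i_n}$ appearing in $\mathrm{Cas}^n$ are ordered left to right exactly as the iterated product $x\cdot x\cdots x$, so no reordering is needed.

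Finally I would pull the scalars $\mathrm{Cas}^n_{(2)}(x)=\prod_j e_{i_j}^*(x)$ out of each summand, using bilinearity of the pairing $V^*\otimes V\to\mathbb{C}$ and linearity of the $U(\mathfrak{g})$-action on $V$, to obtain $\mathrm{Cas}_{(2)}^n\langle\mathrm{Cas}_{(1)}^n v_m,v^m\rangle(x)=\langle x^nv_m,v^m\rangle$, which is exactly the Hamiltonian identified in the first step. I do not expect any genuine obstacle here: the statement is essentially a bookkeeping identity, and the only points requiring attention are absorbing the constant $n!$ into the ``up to scalar'' convention for Hamiltonians and keeping the ordering conventions in $U(\mathfrak{g})$ consistent on the two sides of the equation.
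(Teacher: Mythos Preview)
Your proposal is correct and follows essentially the same approach as the paper: both identify the order-$n$ Hamiltonian with the matrix coefficient $\langle v^m,x^n v_m\rangle$ via the exponential expansion, then verify \eqref{33} by writing $x=\sum_i e_i^*(x)e_i$, expanding $x^n$ multilinearly, and matching the resulting sum $\sum_{i_1,\dots,i_n}(e_{i_1}^*\cdots e_{i_n}^*)(x)\langle e_{i_1}\cdots e_{i_n}v_m,v^m\rangle$ against the Sweedler expansion of $\mathrm{Cas}^n$. Your observation that $\mathrm{Cas}^n$ is the ``universal $n$-th power'' is a slightly cleaner way to phrase the same computation, and you are right to flag the $1/n!$ factor, which the paper silently absorbs.
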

\begin{proof}

By the discussion at the beginning of \ref{345}, we see the order-$n$ Hamiltonians are exactly those matrix coefficients $\langle v^m,x^nv_m\rangle$ after expanding $\langle v^m,\exp (t^{-1} x)v_m\rangle $ into powers of $t$.   For any element in the Lie algebra, expand the matrix coefficients in the chosen basis, i.e, $x=a_1e_1+\cdots a_le_l\in\mathfrak{g}$,  then

\begin{align*}
    \langle v^m,x^nv_m\rangle&=\langle v^m, (a_1e_1+\cdots+a_le_l)\cdots(a_1e_1+\cdots a_le_l)v_m\rangle
    \\&=\sum_{i_1,\cdots,i_n}a_{i_n}a_{i_{n-1}}\cdots a_{i_1}\langle e_{i_n}e_{i_{n-1}}\cdots e_{i_1}v_m,v^m\rangle \\&=\sum_{i_1,\cdots,i_n}e_{i_n}^*\cdots e_{i_1}^*\langle e_{i_n}\cdots e_{i_1}v_m,v^m\rangle x ,
\end{align*}

    where in the above equation we used $a_i=e_i^* x$, and $a_{i_n}\cdots a_{i_1}=(e_{i_n}^*\cdots e_{i_1}^*)x$. On the other hand,
    
\begin{align*}\mathrm{Cas}^n&=(e_1\otimes e^*_1+\cdots +e_l\otimes e^*_l)\cdots (e_1\otimes e^*_1+\cdots +e_l\otimes e^*_l)\\&=\sum_{i_1,\cdots,i_n}e_{i_n}\cdots e_{i_1}\otimes e^*_{i_n}\cdots e^*_{i_1} \end{align*}

Hence $\mathrm{Cas}^n_{(1)}$ are $e_{i_n}\cdots e_{i_1}$, $\mathrm{Cas}^n_{(2)}$ are $e_{i_n}^*\cdots e_{i_1}^*$. Substituting to $\mathrm{Cas}_{(2)}^n\langle\mathrm{Cas}_{(1)}^n v_m, v^m\rangle (x)$, we see immediately the equality holds.
\end{proof}
\begin{rmk}
    The equation \eqref{33} holds for any irreducible representation of $\mathfrak{g}$.
\end{rmk}

Next we show another formulation of Hamiltonians when $\mathfrak{g}$ is classical Lie algebra. We first develop a lemma in linear algebra. Let $A$  be inside any of the classical Lie groups: $\mathrm{SL}_{n+1}(\C)$, $\mathrm{Sp}_{2n}(\mathbb{C})$, $\mathrm{SO}_{2n}(\C)$,  $\mathrm{SO}_{2n+1}(\mathbb{C})$.   If $A$ lies in $\mathrm{SL}_{n+1}$ or $\mathrm{Sp}_{2n}$ , pick $k\in\{1,\cdots, n\}$. If $A\in\mathrm{SO}_{2n}$ , pick $k\in\{1, \cdots, n-2\}$. If $A\in SO_{2n+1}$, pick $ k\in\{1, \cdots, n-1\}$. 
\begin{lma}\label{xiaoai}
For $r\in\N_+$, the degree-$r$ matrix coefficients associated to $k$-th fundamental weight $\varpi_k$  are
\[\langle v^{\omega_k},A^r v_{\omega_k}\rangle=\det((A^r)_{k\times k})\], where $(,)_{k\times k}$ signifies the upper-left $k\times k$ block of that matrix.  
\end{lma}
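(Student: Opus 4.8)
The plan is to compute the matrix coefficient $\langle v^{\omega_k}, A^r v_{\omega_k}\rangle$ by recalling the explicit realization of the fundamental representations $V_{\omega_k}$ of the classical groups. For $\SL_{n+1}$ one has $V_{\omega_k} = \Lambda^k(\C^{n+1})$, with highest weight vector $v_{\omega_k} = e_1\wedge\cdots\wedge e_k$ and lowest weight vector (with respect to $B^+$) given by the complementary wedge $e_{n-k+2}\wedge\cdots\wedge e_{n+1}$ up to normalization; the natural pairing $\langle v^{\omega_k}, \cdot\rangle$ simply extracts the coefficient of $e_1\wedge\cdots\wedge e_k$. For $\Sp_{2n}$ the $k$-th fundamental representation sits inside $\Lambda^k(\C^{2n})$ as the kernel of contraction with the symplectic form, and again $v_{\omega_k} = e_1\wedge\cdots\wedge e_k$ works with the standard ordering of a symplectic basis; similarly for $\SO_{2n+1}$ with $k \le n-1$ and $\SO_{2n}$ with $k \le n-2$ the fundamental representation $V_{\omega_k}$ is $\Lambda^k$ of the standard representation (the restriction on $k$ is precisely to stay away from the spin representations and the middle exterior power that splits). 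The key point is that in all these cases $v_{\omega_k}$ and $v^{\omega_k}$ can be taken to be coordinate wedges $e_1\wedge\cdots\wedge e_k$ and $e_{?}\wedge\cdots$, compatible with the block structure.

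Next I would observe that for any matrix $M \in \GL_N$, the action on $\Lambda^k(\C^N)$ sends $e_1\wedge\cdots\wedge e_k \mapsto M(e_1)\wedge\cdots\wedge M(e_k)$, and the coefficient of $e_1\wedge\cdots\wedge e_k$ in this expression is, by the Cauchy–Binet / definition of the induced map on exterior powers, exactly $\det(M_{k\times k})$, the determinant of the upper-left $k\times k$ block. Applying this with $M = A^r$ gives $\langle v^{\omega_k}, A^r v_{\omega_k}\rangle = \det((A^r)_{k\times k})$ for the $\SL$ case immediately, and for the other classical types one notes that $A^r$ still lies in $\GL_N$ (indeed in the relevant classical group), that $A^r$ preserves the subspace $V_{\omega_k} \subset \Lambda^k$, and that the pairing of $v^{\omega_k}$ with $A^r v_{\omega_k}$ computed inside $V_{\omega_k}$ agrees with the coefficient extraction inside the ambient $\Lambda^k$ because $v_{\omega_k}, v^{\omega_k}$ are coordinate wedges and the projection $\Lambda^k \to V_{\omega_k}$ is the identity on the relevant coordinate. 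Combined with Proposition \ref{145}, which identifies the order-$r$ Hamiltonian associated to $\varpi_k$ with the matrix coefficient $\langle v^m, x^r v_m\rangle$, this yields the stated formula.

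The main obstacle, and the step requiring genuine care rather than bookkeeping, is verifying that for the orthogonal and symplectic groups the lowest weight vector $v^{\omega_k}$ really pairs with $A^r v_{\omega_k}$ to give precisely the $k\times k$ block determinant and not that determinant times a correction coming from the invariant bilinear form. Concretely, one must check that with a basis of $\C^N$ adapted to the form — say $e_1,\dots,e_n, e_{-n},\dots,e_{-1}$ in the symplectic/odd-orthogonal conventions — the $B^+$-highest line in $V_{\omega_k}^*$ is spanned by the functional dual to $e_1\wedge\cdots\wedge e_k$, so that the "upper-left $k\times k$ block" is genuinely the block indexed by $\{1,\dots,k\}$ in the ordered basis, and that the restriction on $k$ guarantees $V_{\omega_k} = \Lambda^k$ (no quotient/summand subtlety). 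I would handle this by fixing once and for all the Bourbaki-compatible ordered basis in each type, exhibiting $v_{\omega_k}$ and $v^{\omega_k}$ explicitly as coordinate wedges in that basis, and then invoking the exterior-power computation above uniformly. The $r$-th power causes no difficulty since $A \mapsto A^r$ commutes with all of this.
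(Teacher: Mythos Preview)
Your proposal is correct and follows essentially the same route as the paper: both identify $v_{\omega_k}$ with $e_1\wedge\cdots\wedge e_k$ in $\Lambda^k(\C^N)$ and then read off $\det((A^r)_{k\times k})$ as the coefficient of that wedge in $A^r(e_1)\wedge\cdots\wedge A^r(e_k)$ via the Cauchy--Binet expansion. Your discussion of the subspace subtlety for types $B$, $C$, $D$ is actually more careful than the paper's brief proof, which simply works in $\GL_N$ and cites a textbook for the exterior-power identity; the closing reference to Proposition~\ref{145} is extraneous to the lemma itself but harmless.
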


\begin{proof}
    For $A\in\mathrm{GL}_{N}(\C)$, set $A^r=(a_{ij})$, the highest weght vector of $\varpi_k$ is $v_m=e_1\wedge\cdots \wedge e_k$, where $e_i$ is the $i$-th standard basis vector of $\mathbb{C}^N$. Denote $u_j=\sum_{i=1}^N a_{ij}e_i$, then 
\[\langle v^m,A^rv_m\rangle=\langle v^m,(A^re_1)\wedge(A^re_2)\cdots\wedge (A^re_k)\rangle=\langle v^m, u_1\wedge\cdots\wedge u_k\rangle. \]
By proposition 3.3 in \cite[pp.84]{Yokonuma}, $u_1\wedge\cdots \wedge u_k=\sum_I (A^r)_Ie_I$, where $I$ runs through all strictly increasing sequence of integers $I=(1\leq i_1<\cdots<i_k\leq N)$, $e_I=e_{i_1}\wedge\cdots\wedge e_{i_k}\in\bigwedge^k(\C^n)$, and $(A^r)_I$ is the determinant of $k\times k$ matrix obtained by extracting lines $i_1,\cdots,i_k$ from $
N\times k$ matrix $(A^r)_{N\times k}=(a_{ij})_{1\leq i\leq N, 1\leq j\leq k}$. 
After pair with $v^m$, only first $k\times k$-part left, i.e, we get  $\det((A^r)_{k\times k})$.

 \end{proof}

 In the following, let the integer $k$  be the same as in the lemma \ref{xiaoai}.
\begin{prop}\label{999}
    For any matrix $x\in\mathfrak{sl}_n(\C)$,  $\mathfrak{sp}_{2n}$, $\mathfrak{so}_{2n}(\C)$, $\mathfrak{so}_{2n+1}(\C)$, the Hamiltonians associated to $\varpi_k$  is \[\langle v^{\omega_k},x^m v_{\omega_k}\rangle=\text{coefficient of }\, t^m \,\text{in}\, \det[(1+tx+\frac{t^2x^2}{2}+\cdots)_{k\times k}]\]
\end{prop}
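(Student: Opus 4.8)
The plan is to combine Proposition \ref{145} with Lemma \ref{xiaoai}, using the exponential series as the unifying generating function. The key observation is that by Proposition \ref{145} (and the remark following it), for any $x$ in a classical Lie algebra the order-$n$ Hamiltonian associated to $\varpi_k$ is the matrix coefficient $\langle v^{\omega_k}, x^n v_{\omega_k}\rangle$ taken in the fundamental representation $V_{\varpi_k}$. Summing these against powers of $t$ reconstructs $\langle v^{\omega_k}, \exp(tx) v_{\omega_k}\rangle$, so the whole collection of Hamiltonians is packaged into a single matrix coefficient of the exponential.

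First I would write $\exp(tx) = 1 + tx + \tfrac{t^2 x^2}{2} + \cdots$ as a formal power series in $t$ with coefficients in $\mathrm{End}(\C^N)$ (where $N = n+1, 2n, 2n, 2n+1$ respectively), noting that this series also makes sense inside the corresponding classical group since $\exp(tx)$ lies in $\mathrm{SL}_{n+1}$, $\mathrm{Sp}_{2n}$, $\mathrm{SO}_{2n}$ or $\mathrm{SO}_{2n+1}$ for $x$ in the corresponding Lie algebra. Then I would apply Lemma \ref{xiaoai} with ``$A$'' replaced by the group element $\exp(tx)$, except that the lemma as stated is about powers $A^r$ of a fixed group element; what I actually need is its proof, which only uses that the matrix coefficient $\langle v^{\omega_k}, M v_{\omega_k}\rangle$ of \emph{any} $N\times N$ matrix $M$ acting on $\bigwedge^k \C^N$ equals $\det(M_{k\times k})$, the upper-left $k\times k$ minor. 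This is precisely the content of the Cauchy--Binet / exterior-power computation in the proof of Lemma \ref{xiaoai} (via Proposition 3.3 of \cite{Yokonuma}), and it is multilinear, hence valid for matrices with entries in the polynomial ring $\C[t]$. Applying it to $M = \exp(tx)$ gives
\[
\langle v^{\omega_k}, \exp(tx)\, v_{\omega_k}\rangle = \det\bigl[(1 + tx + \tfrac{t^2x^2}{2} + \cdots)_{k\times k}\bigr].
\]
Finally I would extract the coefficient of $t^m$ on both sides: on the left it is $\tfrac{1}{m!}\langle v^{\omega_k}, x^m v_{\omega_k}\rangle$, which up to the harmless scalar $m!$ is the order-$m$ Hamiltonian by Proposition \ref{145}; on the right it is the coefficient of $t^m$ in the determinant of the truncated exponential's $k\times k$ block, as claimed. (If one wants the Hamiltonian on the nose without the $1/m!$, one notes that rescaling by a nonzero constant does not change the function up to the conventions already fixed in \ref{345}, or one absorbs it by using $\exp(t^{-1}x)$ as in the definition and reading off $t^{-m}$-coefficients.)

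The only genuinely delicate point is the compatibility of the range of $k$ in Lemma \ref{xiaoai} with the claim: for $\mathrm{SO}_{2n}$ and $\mathrm{SO}_{2n+1}$ the top fundamental representations are the spin representations, which are \emph{not} realized on $\bigwedge^k \C^N$, so the minor formula only applies to the ``vector-type'' fundamental weights, and this is exactly why the proposition restricts $k$ to $\{1,\dots,n-2\}$ and $\{1,\dots,n-1\}$ respectively — I would state this restriction explicitly and point out that for the excluded spinor nodes the Hamiltonians require the separate treatment given by Proposition \ref{145} directly. A secondary point to check is that the wedge-power realization of $V_{\varpi_k}$ is irreducible (true for type $A$, and for $C_n$, $B_n$, $D_n$ in the stated range one must recall $\bigwedge^k \C^N$ decomposes but the Hamiltonian, being a matrix coefficient at the highest/lowest weight line, only sees the top summand $V_{\varpi_k}$, so the computation still returns the correct function). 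These are bookkeeping issues rather than real obstacles; the mathematical heart is just the multilinearity of the minor formula, which lets us substitute a power series for a matrix.
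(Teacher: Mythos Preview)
Your argument is correct and follows essentially the same route as the paper: both set the matrix in Lemma~\ref{xiaoai} equal to the exponential $e^{tx}$ (the paper phrases this as ``write $A=e^x$'' and then rescales $x\mapsto tx'$) and read off the $t^m$-coefficient. Your justification via the multilinearity of the minor formula over $\C[[t]]$ is more explicit than the paper's substitution, and your remarks on the stray factor $1/m!$ and on the excluded spinor nodes are accurate observations that the paper leaves implicit.
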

\begin{proof}
    Write $A=e^x\in\mathrm{SL}_n(\mathbb{C})$, then $A^n=e^{nx}$. From lemma \ref{xiaoai}, 
\[\langle v^{\omega_k}, (1+nx+\frac{n^2x^2}{2}+\cdots)v_{\omega_k}\rangle=\det[(1+nx+\frac{n^2x^2}{2}+\cdots)_{k\times k}].\]
Writing $x=tx'$ and substituting to the above formular, we see the proposition follows.
\end{proof}
\begin{rmk}\label{1000}
    If we are considering $x$ in the minimal nilpotent orbit, then $x^2=0$ . For type $A$ and type $C$,  $\mathrm{rank}(x)=1$,  so we get the coefficient of $t$ in $\det[(1+tx)_{k\times k}]$, which is $\mathrm{tr}(x_{k\times k})$. For type $B$ and type $D$ when the fundamental representation is not the spinor representation, $\mathrm{rank}(x)=2$, we get the coefficient of $t$ in $\det[(1+tx)_{k\times k}]$ and the coefficient of $t^2$ in  $\det[(1+tx)_{k\times k}]$  , the latter is $\mathrm{Tr}(\bigwedge^2 B_{k\times k})$, which is a coefficient of the characteristic polynomial of $x_{k\times k}$. 
For type $B$ and $D$ when the fundamental representation is the spinor representation, we need to use the vanishing results by Heisenberg algebra basis, as in theorem \ref{3.1}.
\end{rmk}

\subsection{ Heisenberg algebra's basis}
We have
\[\mathcal{O}_{\text{min}}=Gf_{\theta^\vee}=\bigcup_{w\in W}BwB f_{\theta^\vee}=\bigcup_{w\in W}Bf_{w\theta^\vee}=\bigcup_{\alpha\in\Phi_{long}}B\cdot f_{\alpha}\]

In particular $B f_{\theta^\vee}$  is a dense open set in $\mathcal{O}_{min}$, in the following we show it is the same as $\C^*$ times the orbit of Heisenberg group $H$ of Kostant's Heisenberg algebra. We let $n^\star:=\mathrm{span}\{\varphi\in R^+|\, \langle \varphi, \theta\rangle \neq 0\}$ be Kostant's Heisenberg algebra. In \cite[pp.252]{kostant85} it is shown that $n^\star$ is a Heisenberg algebra and has dimension $2h^\vee-3$.
We write a basis of the Lie algebra of $B$ as $\{H_1,\cdots, H_l, Y_1,\cdots Y_m, X_1, \cdots, X_{2h^\vee-3}\}$, where $H_1,\cdots, H_l$ are Cartan parts, $Y_1,\cdots, Y_m$ are those positive roots which are perpendicular to the highest root $\theta^\vee$, and $X_1,\cdots,X_{2h^\vee-3}$ are Kostant's roots.

\begin{lma}\label{zisha}
    There is an isomorphism of varieties between $B$-orbit of $f_{\theta^\vee}$ and $C^*$ times the Heisenberg orbit of $f_{\theta^\vee}$, i.e,
    \[Bf_{\theta^\vee}\cong \C^*\times Hf_{\theta^\vee}\cong \C^*\times \mathfrak{n}^\star \]
\end{lma}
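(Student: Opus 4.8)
The plan is to analyze the $B$-action on $f_{\theta^\vee}$ weight-by-weight using the basis $\{H_i, Y_j, X_k\}$ of $\mathfrak{b}$ described above. First I would record the bracket relations: since $\theta^\vee$ is the highest root, $[H_i, f_{\theta^\vee}] = -\langle\theta^\vee,\alpha_i^\vee\rangle f_{\theta^\vee}$ is a scalar multiple of $f_{\theta^\vee}$; since each $Y_j$ is a positive root vector perpendicular to $\theta$, the sum $\theta^\vee + (\text{root of } Y_j)$ is never a root (it would have to be longer than $\theta^\vee$, impossible), so $[Y_j, f_{\theta^\vee}] = 0$; and for each Kostant root $\varphi$ with $\langle\varphi,\theta\rangle \neq 0$, one has $\langle\varphi,\theta\rangle = 1$ (as $\theta^\vee$ is quasi-minuscule in the relevant sense), so $\theta^\vee + \varphi$ is not a root but $\theta^\vee - \varphi =: \psi_\varphi$ is a (long) root, whence $[X_\varphi, f_{\theta^\vee}]$ is a nonzero multiple of $f_{\psi_\varphi}$. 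Thus acting on $f_{\theta^\vee}$, the Cartan rescales, the $Y_j$'s act trivially, and the $X_\varphi$'s move $f_{\theta^\vee}$ into the span of the $f_{\psi}$ for $\psi$ long.

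Next I would write $B = T \cdot \exp(\mathfrak{n}^\star) \cdot \exp(\mathrm{span}\{Y_j\})$ (using that $\mathfrak{n}^\star$ together with the $Y_j$'s spans $\mathfrak{n}_+$, with $\mathfrak{n}^\star$ an ideal complemented by the perpendicular part — this should be extracted from Kostant's structural results cited above). Because the $Y_j$'s annihilate $f_{\theta^\vee}$, the factor $\exp(\mathrm{span}\{Y_j\})$ acts trivially, so $B f_{\theta^\vee} = T \cdot H \cdot f_{\theta^\vee}$ where $H = \exp(\mathfrak{n}^\star)$ is the Heisenberg group. The torus acts on $f_{\theta^\vee}$ through the single character $\theta^\vee$, surjecting onto $\C^*$; and one checks $T$ normalizes $H$ and the combined action factors as the asserted product $\C^* \times H f_{\theta^\vee}$ — the point being that the $\C^*$-scaling direction is transverse to the $H$-orbit directions, which all lie in $\mathrm{span}\{f_\psi : \psi \text{ long}, \psi \neq \theta^\vee\}$, giving a direct product of varieties rather than merely a surjection. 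Finally, for the identification $H f_{\theta^\vee} \cong \mathfrak{n}^\star$, I would use that the Heisenberg group $H$ has one-dimensional center acting trivially on $f_{\theta^\vee}$ (the center corresponds to the root space $\mathfrak{g}_{\theta^\vee}$ itself, and $[f_{\theta^\vee},f_{\theta^\vee}]=0$ wait — rather the center maps $f_{\theta^\vee}$ to a multiple of $f_{\theta^\vee}$, already absorbed into the $\C^*$), so the effective action is by $H/Z(H) \cong \mathfrak{n}^\star/\text{(center)} $, and the orbit map $\exp(X) \mapsto \exp(X)f_{\theta^\vee}$ becomes, to leading order, the linear isomorphism $X \mapsto [X, f_{\theta^\vee}]$ onto a complementary subspace; nilpotence of the action (each $X_\varphi$ kills $f_{\theta^\vee}$ after finitely many applications, since iterated brackets leave the root string quickly) makes the exponential a polynomial map, and a dimension count ($\dim \mathfrak{n}^\star = 2h^\vee - 3$, matching $\dim \mathcal{O}_{\min} - 1 - l + (\text{stuff})$, to be reconciled with the earlier $\dim\mathcal{O}_{\min} = 2h^\vee-2$ computation) together with injectivity gives the isomorphism of varieties.

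The main obstacle I anticipate is the bookkeeping around the center of the Heisenberg algebra and making the claim $H f_{\theta^\vee} \cong \mathfrak{n}^\star$ literally correct: the center of $\mathfrak{n}^\star$ is the line $\mathfrak{g}_{\theta^\vee}$, and $\exp(t e_{\theta^\vee})$ acting on $f_{\theta^\vee}$ gives $f_{\theta^\vee} + t[e_{\theta^\vee},f_{\theta^\vee}] + \ldots = f_{\theta^\vee} + t h_{\theta^\vee}\cdot(\text{const}) + \ldots$, which is \emph{not} trivial — so one must be careful about which scalar/direction is being quotiented and verify that the stated isomorphism $\C^*\times Hf_{\theta^\vee}\cong \C^*\times\mathfrak{n}^\star$ is really an equality of the \emph{second} factors or whether the $\C^*$ is needed precisely to soak up the center direction. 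I would resolve this by computing the orbit map explicitly in a two-step filtration of $\mathfrak{n}^\star$ (center, then the rest) and checking that $\exp(\mathfrak{n}^\star)f_{\theta^\vee}$ is an affine space of the right dimension on which the map from $\mathfrak{n}^\star$ is a polynomial bijection with polynomial inverse. The remaining steps — trivial action of the $Y_j$, surjectivity of $T \to \C^*$ via $\theta^\vee$, and $T$ normalizing $H$ — are routine once the bracket relations above are in hand.
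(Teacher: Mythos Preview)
Your approach is essentially the same as the paper's: decompose $B=TN$, observe that the positive root vectors $Y_j$ perpendicular to $\theta^\vee$ annihilate $f_{\theta^\vee}$ (the paper cites this directly from Humphreys rather than deriving it from root-string considerations as you do), reduce to the torus times the Heisenberg orbit, and finish with a dimension count against $\dim\mathcal{O}_{\min}=2h^\vee-2$.

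Where you diverge is in the treatment of $Hf_{\theta^\vee}\cong\mathfrak{n}^\star$. You try to analyze the orbit map structurally, worrying about the center of $\mathfrak{n}^\star$ and whether it is absorbed into the $\C^*$ factor. The paper bypasses this entirely: it simply writes down the surjection $\C^*\times\mathfrak{n}^\star\to\C^*\times Hf_{\theta^\vee}\cong Bf_{\theta^\vee}$ and observes that source and target are irreducible of the same dimension $2h^\vee-2$, hence isomorphic. Your instinct that the center $\mathfrak{g}_{\theta^\vee}$ does \emph{not} act trivially on $f_{\theta^\vee}$ is correct and is precisely why the dimension count goes through --- the $H$-stabilizer of $f_{\theta^\vee}$ is trivial (its Lie algebra is $\mathfrak{n}^\star\cap\{Z:[Z,f_{\theta^\vee}]=0\}=0$ by the same Humphreys fact), so $H\to Hf_{\theta^\vee}$ is already an isomorphism and $\exp:\mathfrak{n}^\star\to H$ supplies the rest. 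Your filtration argument would work but is more than is needed; the dimension count you mention at the very end is the whole story.
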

\begin{proof}
The Borel $B$ has a Levi decomposition as a semidirect product $B=TN$, with $T$ the maximal torus and $N$ its unipotent radical. From \cite[proposition. 8.2.1]{sp} , there is an isomorphism of varieties 
 $\phi:\,\mathbb{C}^{m+2h^\vee-3}\xrightarrow{\sim} N$ with $\phi(x_1,\cdots,x_{2h^\vee-3},y_1,\cdots,y_m)=\exp(x_1X_1)\cdots\exp(x_{2h^{\vee}-3}X_{2h^\vee-3})\exp(y_1Y_1)\cdots\exp(y_mY_m).$
Hence $Bf_{\check{\theta}}=NTf_{\check{\theta}}=\mathbb{C}^*\times Nf_{\check{\theta}}=
\mathbb{C}^*\times \exp(x_1X_1)\cdots\exp(x_{2h^{\vee}-3}X_{2h^\vee-3})\exp(y_1Y_1)\cdots\exp(y_mY_m)
f_{\check{\theta}}.
$
Notice the action of $\exp(y_1Y_1)\cdots\exp(y_mY_m)$ stabilizes $f_{\theta^\vee}$, which following from the fact that any positive root vector $e_\gamma$ annihilates $f_{\theta^\vee}$ if and only if $(\theta^\vee, \gamma)=0$, see \cite[8.4,9.4]{humphreys12}  , i.e, $\gamma\in\{Y_1,\cdots, Y_m\}$. Therefore, we have the following surjection map 

\begin{equation}\label{mmn}\C^*\times \mathfrak{n}^\star \xrightarrow{\exp} \C^*\times H\to \C^*\times Hf_{\theta^\vee}\cong B f_{\theta^\vee}.     \end{equation}

From $\dim\,\C^*\times\mathfrak{n}^\star=2h^\vee-2$, $\dim B f_{\theta^\vee}=\dim \,\mathcal{O}_{min}=2h^\vee-2$,  we obtain the map is an isomorphism.
\end{proof}
In the following we identify $\mathbb{C}^*\times \mathfrak{n}^\star$ as a subset in $\mathcal{O}_{\min}$ thanks to the previous lemma \ref{zisha}. The part of $\mathbb{C}^*$ gives a scalar multiplication factor on the Hamiltonian function, so it will be sufficient for us to calculate Hamiltonians on $\mathfrak{n}^\star$. 
Let $x_0,x_1,\cdots$ be the coefficients of positive roots in $n^\star$.  For $x=x_1 \varphi_{1}+x_2\varphi_{2}+\cdots+x_{2h^\vee-3}\varphi_{2h^\vee-3}\in \mathfrak{n}^\star$,  denote by $E_i$ the element in $U(n^\star)$ corresponding to the root $\varphi_i$, define 
$\mathrm{Kos}:=\sum_{i=1}^{2h^\vee-3}E_i\otimes (E_i)^* \in U(\mathfrak{n}^\star)\otimes U(\mathfrak{n}^\star)^*$.
\begin{prop}
The degree-$r$ Hamiltonian corresponding to the fundamental weight $\varpi_i$ is 
\begin{equation}\label{778}\langle v^i, \exp(x)f_\theta^r v_i\rangle=\sum_{k=1}^{l}\frac{1}{k!}\mathrm({Kos}^k_{(2)})\langle \mathrm{Kos}^k_{(1)}f_\theta^r v_i,v^i\rangle(x)\end{equation}
where as before we use Sweedle notation $\mathrm{Kos}^n=\mathrm{Kos}_{(1)}^n\otimes\mathrm{Kos}_{(2)}^n$.
\end{prop}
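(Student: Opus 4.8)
The plan is to unpack the statement into two independent assertions and prove each in turn. First, that on the dense open subset $Bf_\theta\cong\C^*\times\mathfrak{n}^\star$ of $\mathcal{O}_{\min}$ (Lemma \ref{zisha}) the order-$r$ Hamiltonian attached to $\varpi_i$ is, up to the overall $\C^*$-scaling already flagged after Lemma \ref{zisha}, the function $x\mapsto\langle v^i,\exp(x)f_\theta^r v_i\rangle$ on $\mathfrak{n}^\star$. Second, that this function admits the Kostant--Sweedler expansion on the right-hand side, in exact analogy with Proposition \ref{145}. Since the Hamiltonians $f_{n,k}$ are regular functions on $\mathcal{O}_{\min}$, it is enough to identify them on the dense open $Bf_\theta$, so nothing is lost by restricting to $\C^*\times\mathfrak{n}^\star$.

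For the first assertion, recall from Proposition \ref{xiaobaobao} and the discussion preceding Proposition \ref{145} that the order-$r$ Hamiltonian attached to $\varpi_i$ is the matrix coefficient $\xi\mapsto\langle v^i,\xi^r v_i\rangle$ on $\mathcal{O}_{\min}$, where $\xi^r$ is the $r$-th power of the action of $\xi$ on $V_{\varpi_i}$. By Lemma \ref{zisha} a point of $Bf_\theta$ has the form $c\cdot\mathrm{Ad}(\exp x)\,f_\theta$ with $c\in\C^*$ and $x\in\mathfrak{n}^\star$, so the Hamiltonian there equals $c^r\langle v^i,(\mathrm{Ad}(\exp x)f_\theta)^r v_i\rangle$; the scalar $c^r$ is the harmless $\C^*$-factor, which we drop. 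Now $\mathrm{Ad}(\exp x)=\exp(\mathrm{ad}_x)$, so as operators on $V_{\varpi_i}$ one has $\mathrm{Ad}(\exp x)f_\theta=\exp(x)\,f_\theta\,\exp(-x)$, where $\exp(x)$ is the operator exponential (a finite sum, since $x\in\mathfrak{n}_+$ acts nilpotently on $V_{\varpi_i}$); raising to the $r$-th power telescopes to $\exp(x)\,f_\theta^r\,\exp(-x)$. Finally, $x\in\mathfrak{n}^\star\subset\mathfrak{n}_+$ annihilates the highest weight vector, so $\exp(-x)v_i=v_i$, and we are left with $\langle v^i,\exp(x)f_\theta^r v_i\rangle$, as claimed.

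The second assertion is then purely combinatorial and mirrors the proof of Proposition \ref{145} with $\mathrm{Cas}$ replaced by $\mathrm{Kos}$. Expand $\exp(x)=\sum_{k\ge 0}\tfrac1{k!}x^k$ (again a finite sum by nilpotency), write $x=\sum_j x_j E_j$ in the chosen basis of $\mathfrak{n}^\star$, so that $x^k$ acting on $V_{\varpi_i}$ equals $\sum_{j_1,\dots,j_k}x_{j_1}\cdots x_{j_k}\,E_{j_1}\cdots E_{j_k}$; recognizing $\sum_{j_1,\dots,j_k}E_{j_1}\cdots E_{j_k}\otimes E^*_{j_1}\cdots E^*_{j_k}=\mathrm{Kos}^k$ and that $\mathrm{Kos}^k_{(2)}$ evaluated at $x$ reproduces the monomial $x_{j_1}\cdots x_{j_k}$, one obtains $\sum_k\tfrac1{k!}\langle v^i,x^k f_\theta^r v_i\rangle=\sum_k\tfrac1{k!}\mathrm{Kos}^k_{(2)}\langle\mathrm{Kos}^k_{(1)}f_\theta^r v_i,v^i\rangle(x)$. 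The $k=0$ term is the constant $\langle v^i,f_\theta^r v_i\rangle$, which vanishes by a weight count unless $r\theta^\vee=\varpi_i-w_0\varpi_i$; this is why the sum effectively starts at $k=1$, and the upper cutoff is simply the nilpotency degree of $\mathfrak{n}^\star$ on $V_{\varpi_i}$.

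I expect the only genuine friction to be bookkeeping the conventions: keeping straight which extremal vector is $v_i$ versus $v^i$, and in particular the point that the lowest root vector $f_\theta$ does \emph{not} kill $v_i$ (so $f_\theta^r v_i\neq 0$ in the relevant range of $r$) while all of $\mathfrak{n}^\star$ does, so that the conjugation collapses cleanly to $\exp(x)f_\theta^r\exp(-x)\mapsto\exp(x)f_\theta^r$ on $v_i$. There is no analytic or geometric difficulty: each step is either Lemma \ref{zisha}, the identity $\mathrm{Ad}(\exp x)=\exp(\mathrm{ad}_x)$, or a formal power-series manipulation that literally copies the proof of Proposition \ref{145}; the operator-ordering subtlety in the Sweedler sum is the same one already resolved there.
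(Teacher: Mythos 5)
Your proposal is correct and follows essentially the same route as the paper: restrict to the dense orbit $Bf_\theta\cong\C^*\times H f_\theta$ via Lemma \ref{zisha}, collapse the conjugation using $\exp(-x)v_i=v_i$ for $x\in\mathfrak{n}^\star\subset\mathfrak{n}_+$ (the paper does this by writing $\exp(ty)=\exp(x)\exp(tf_\theta)\exp(-x)$ and extracting the coefficient of $t^r$, which is your telescoping identity in exponentiated form), and then repeat the Sweedler expansion of Proposition \ref{145} with $\mathrm{Cas}$ replaced by $\mathrm{Kos}$. Your extra remarks on the $k=0$ term and the convention for $v_i$ versus $v^i$ are harmless additions not present in the paper's proof.
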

\begin{proof}
    The $H$- orbit of $f_{\theta}$  is $Hf_{\theta}=\{\exp(x)f_{\theta}\exp(-x):\,x\in \mathfrak{n}^\star\}$,  let $v_m$ be a highest weight vector of fundamental representation, $y=\exp(x)f_{\theta}\exp(-x)\in Hf_\theta$, the matrix coefficient  
 
\begin{align*}\langle v^m, \exp(y)v_m\rangle &=\langle v^m,\exp(x)\exp(tf_{\theta})\exp(-x)v_m\rangle=\langle v^m, \exp(x)\exp(tf_{\theta})v_m\rangle\\&=\langle v^m, \exp(x)(v_m+tf_\theta v_m+t^2\frac{f_\theta^2}{2}v_m+\cdots)\rangle\end{align*}\\
Extracting powers of $t$, same calculation as in \ref{145} yields the results.
\end{proof}

\begin{eg}In this example we write explicitly the Hamiltonians from $\mathfrak{so}(5)$ to $\mathfrak{so}(10)$ and $\mathfrak{g}_2$ using Heisenberg basis. First we label the Heisenberg's root as the follwing table :
\\ \hspace*{\fill}\\
\resizebox{\linewidth}{!}{
$\begin{array}{ |ccccccc|  }
\hline
\text{Roots in Heisenberg}&B_2&D_3&B_3&D_4&B_4&D_5 \\
\hline
E[0]&\alpha_2&\alpha_2 &\alpha_2 &\alpha_2&\alpha_2 &\alpha_2 \\
\hline
E[1]&\alpha_1+2\alpha_2&\alpha_3&\alpha_1+\alpha_2&\alpha_1+\alpha_2&\alpha_1+\alpha_2&\alpha_1+\alpha_2
\\
\hline
E[2]&\alpha_1+\alpha_2&\alpha_1+\alpha_2&\alpha_2+2\alpha_3&\alpha_2+\alpha_3&\alpha_2+\alpha_3&\alpha_2+\alpha_3
\\
\hline
E[3]&&\alpha_1+\alpha_3&\alpha_2+\alpha_3&\alpha_2+\alpha_4&\alpha_1+\alpha_2+\alpha_3&\alpha_1+\alpha_2+\alpha_3    \\
\hline
E[4]&&\alpha_1+\alpha_2+\alpha_3&\alpha_1+\alpha_2+2\alpha_3&\alpha_1+\alpha_2+\alpha_3&\alpha_2+\alpha_3+2\alpha_4&\alpha_2+\alpha_3+\alpha_4 \\
\hline
E[5]&&&\alpha_1+\alpha_2+\alpha_3&\alpha_1+\alpha_2+\alpha_4&\alpha_2+\alpha_3+\alpha_4&\alpha_2+\alpha_3+\alpha_5\\
\hline
E[6]&&&\alpha_1+2\alpha_2+2\alpha_3&\alpha_2+\alpha_3+\alpha_4&\alpha_1+\alpha_2+\alpha_3+2\alpha_4&\alpha_1+\alpha_2+\alpha_3+\alpha_4\\
\hline
E[7]&&&&\alpha_1+\alpha_2+\alpha_3+\alpha_4&\alpha_2+2\alpha_3+2\alpha_4&\alpha_1+\alpha_2+\alpha_3+\alpha_5\\
\hline
E[8]&&&&\alpha_1+2\alpha_2+\alpha_3+\alpha_4&\alpha_1+\alpha_2+\alpha_3+\alpha_4&\alpha_2+\alpha_3+\alpha_4+\alpha_5\\
\hline
E[9]&&&&&\alpha_1+\alpha_2+2\alpha_3+2\alpha_4&\alpha_1+\alpha_2+\alpha_3+\alpha_4+\alpha_5\\
\hline
E[10]&&&&&\alpha_1+2\alpha_2+2\alpha_3+2\alpha_4&\alpha_2+2\alpha_3+\alpha_4+\alpha_5
\\
\hline
E[11]&&&&&&\alpha_1+\alpha_2+2\alpha_3+\alpha_4+\alpha_5\\
\hline
E[12]&&&&&&\alpha_1+2\alpha_2+2\alpha_3+\alpha_4+\alpha_5\\
\hline
\end{array}$}
\\ \hspace*{\fill}\\
Then
\begin{itemize}
    \item $\mathfrak{so}(5)$: 
    For a generic element $x=x_0E[0]+x_1E[1]+x_2E[2]$ inside the Heisenberg, we summerize the Hamiltonians in the table:\\ \hspace*{\fill}\\ 
    
    \[\begin{array}{|ccc|}
    \hline
    &\varpi_1&\varpi_2\\
    \hline
    \text{degree 1 parts} &x_0x_2+x_1&x_1\\
    \hline
    \text{degree 2 parts} &0&0\\
    \hline
     \end{array}
     \]
\\
    \item 
    $\mathfrak{so}(6)$: For a generic element $x=x_0E[0]+x_1E[1]+x_2E[2]+x_3E[3]+x_4E[4]$ inside the Heisenberg, we summerize the Hamiltonians in the table:    \\ \hspace*{\fill}\\
\[
    \begin{array}{|cccc|}
    \hline
    &\varpi_1&\varpi_2&\varpi_3\\
    \hline
    \text{Order-1} &\frac{1}{2}x_1x_2+\frac{1}{2}x_0x_3+x_4&\frac{1}{2}x_1x_2-\frac{1}{2}x_0x_3+x_4&-\frac{1}{2}x_1x_2+\frac{1}{2}x_0x_3+x_4    \\
    \hline
    \text{Order-2} &0&0&0\\
    \hline
     \end{array}
     \]
    \\
\item 
    $\mathfrak{so}(7)$: For a generic element $x=x_0E[0]+x_1E[1]+x_2E[2]+x_3E[3]+x_4E[4]+x_5E[5]+x_6E[6]$ inside the Heisenberg, we summerize the Hamiltonians in the table:      \\ \hspace*{\fill}\\    \resizebox{\linewidth}{!}{
    $\begin{array}{|cccc|}
    \hline
    &\varpi_1&\varpi_2&\varpi_3\\
    \hline
    \text{Order-1} &-\frac{1}{2}x_1x_2+\frac{1}{2}x_0x_4+x_3x_5+x_6&2x_6&\frac{1}{2}x_1x_2+\frac{1}{2}x_0x_4+x_6   \\
    \hline
    \text{Order-2} &0&-\frac{1}{4}x_1^2x_2^2 - \frac{1}{2}x_0x_1x_2x_4 - x_1x_3^2x_4 - \frac{1}{4}x_0^2x_4^2 &0\\& &+ x_1x_2x_3x_5 - x_0x_3x_4x_5 + x_0x_2x_5^2 + x_6^2&\\
    \hline
     \end{array}
     $
}
    \\
\item 
    $\mathfrak{so}(8)$: For a generic element $x=x_0E[0]+x_1E[1]+x_2E[2]+x_3E[3]+x_4E[4]+x_5E[5]+x_6E[6]+x_7E[7]+x_8E[8]$ inside the Heisenberg, we summerize the Hamiltonians in the table:         \\ \hspace*{\fill}\\
    \resizebox{\linewidth}{!}{   
    $\begin{array}{|ccccc|}
    \hline
    &\varpi_1&\varpi_2&\varpi_3&\varpi_4\\
    \hline
    \text{Order-1} &\frac{1}{2}x_3x_4+\frac{1}{2}x_2x_5-\frac{1}{2}x_1x_6   &2x_8 & \frac{1}{2}x_3x_4 - \frac{1}{2}x_2x_5 + \frac{1}{2}x_1x_6 &
    -\frac{1}{2}x_3x_4 + \frac{1}{2}x_2x_5 + \frac{1}{2}x_1x_6    \\
&+\frac{1}{2}x_0x_7+x_8 & &+ \frac{1}{2}x_0x_7 + x8&    + \frac{1}{2}x_0x_7 + x_8\\    \hline
    \text{Order-2} &0&-\frac{1}{4}x_3^2x_4^2+\frac{1}{2}x_2x_3x_4x_5-\frac{1}{4}x_2^2x_5^2&0 &0\\ & & +\frac{1}{2}x_1x_3x_4x_6+\frac{1}{2}x_1x_2x_5x_6+x_0x_4x_5x_6& &\\& &-\frac{1}{4}x_1^2x_6^2-x_1x_2x_3x_7-\frac{1}{2}x_0x_3x_4x_7& &\\& &-\frac{1}{2}x_0x_2x_5x_7-\frac{1}{2}x_0x_1x_6x_7-\frac{1}{4}x_0^2x_7^2+x_8^2&&\\    \hline
     \end{array}
    $}
\\ \hspace*{\fill}\\
\item 
    $\mathfrak{so}(9)$: For a generic element $x=x_0E[0]+x_1E[1]+x_2E[2]+x_3E[3]+x_4E[4]+x_5E[5]+x_6E[6]+x_7E[7]+x_8E[8]+x_9E[9]+x_{10}E[10]$ inside the Heisenberg, we summerize the Hamiltonian in the table:     \\ \hspace*{\fill}\\
    \resizebox{\linewidth}{!}{   $ \begin{array}{|ccccc|}
    \hline
    &\varpi_1&\varpi_2&\varpi_3&\varpi_4\\
    \hline
    \text{Order-1} &\frac{1}{2}x_3x_4+\frac{1}{2}x_2x_6-\frac{1}{2}x_1x_7   &2x_{10} & x_1x_7+x_0x_9+2x_{10} &
    -\frac{1}{2}x_3x_4 + \frac{1}{2}x_2x_6      \\ &-x_5x_8+\frac{1}{2}x_0x_9+x_10& & &+ \frac{1}{2}x_1x_7+\frac{1}{2}x_0x_9+x_{10} \\
    \hline
    \text{Order-2} &0&
    -\frac{1}{4}x_3^2x_4^2+\frac{1}{2}x_2x_3x_4x_6
    &
    -\frac{1}{4}x_3^2x_4^2+\frac{1}{2}x_2x_3x_4x_6
    &0\\    
    & & -x_3x_5^2x_6-\frac{1}{4}x_2^2x_6^2 &    -x_3x_5^2x_6-\frac{1}{4}x_2^2x_6^2& \\
    & & +\frac{1}{4}x_1^2x_7^2+x_3x_4x_5x_8&     +\frac{1}{4}x_2^2x_6^2+\frac{1}{4}x_1^2x_7^2& \\
    & & +x_2x_5x_6x_8-x_2x_4x_8^2& +x_3x_4x_5x_8+x_2x_5x_6x_8-x_2x_4x_8^2&\\
    & & +\frac{1}{2}x_0x_1x_7x_9+\frac{1}{4}x_0^2x_9^2&+\frac{1}{2}x_0x_1x_7x_9+\frac{1}{4}x_0^2x_9^2&\\
    & & +x_1x_7x_{10}+x_0x-9x_{10}+x_{10}^2&    +x_1x_7x_{10}+x_0x_9x_{10}+x_{10}^2& \\ 
    \hline
     \end{array}$
    }
\\ \hspace*{\fill}\\
 \item $\mathfrak{g}_2$: We label the Heisenberg's root by 
$ \begin{array}{|cc|}\hline
      E[0]&\alpha_2  \\
      \hline
      E[1]&\alpha_1+\alpha_2\\
      \hline
      E[2]&3\alpha[1]+\alpha[2]\\
      \hline
      E[3]&2\alpha[1]+\alpha[2]\\
      \hline
      E[4]&3\alpha[1]+2\alpha[2]\\
      \hline
 \end{array}$
\\ \hspace*{\fill}\\ For a generic element $x=x_0E[0]+x_1E[1]+x_2E[2]+x_3E[3]+x_4E[4]$ inside the Heisenberg, we summerize the Hamiltonian in the table:  \\ \hspace*{\fill}\\
 $\begin{array}{|ccc|}
    \hline
    &\varpi_1&\varpi_2\\
    \hline
    \text{Order-1} &\frac{1}{2}x_0x_2-\frac{1}{2}x_1x_3+x_4&2x_4\\
    \hline
    \text{Order-2} &0&-x_1^3x_2-\frac{1}{4}x_0^2x_2^2+\frac{3}{2}x_0x_1x_2x_3+\frac{3}{4}x_1^2x_3^2-x_0x_3^3+x_4^2\\
    \hline\end{array}$
    \end{itemize}
    \hspace*{\fill}\\
   \end{eg}

\subsection{Proof of main theorem}
Now, we can finish the proof of the main theorem \ref{998}. \begin{proof}[Proof of Theorem \ref{998}]\label{ppl}
For part one of the theorem, take the Manin triple for simple algebraic group $G$ to be $\left(G\left(\!\left(t^{-1}\right)\!\right), G_1 [\![ t^{-1}]\!], G[t]\right)$.  From proposition 2.2, theorem 2.3 and corollary 2.9 in \cite{LY}, we deduce that $\overline{\mathcal{W}^\theta_0}$ is a Poisson subvariety of  $\operatorname{Gr}_G$, with nature Poisson structure  invariant under the $G\left(\!\left(t^{-1}\right)\!\right)$-action.

It follows from proposition \ref{2.10} and the isomorphism in \ref{lem2.7} that the Kirillov-Kostant Poisson structure on minimal orbit is the same as the nature $G\left(\!\left(t^{-1}\right)\!\right)$-invariant Poission structure on $\overline{\mathcal{W}^\theta_0}$ induced from the Manin triple.  
\\
For part two, the expression of amiltonians follows from proposition \ref{145}.   It remains to calculate the numbers $m_k$. The method is case by case discussion, we show type $F_4$ and $E_8$ as follows.  The arguments of other types are the same. \\ 
For type $F_4$, identify
$\mathfrak{h}^*=\mathbb{R}^{4}$, $\varepsilon_i$ $(1\leq i\leq 4) $ be the unit coordinate vectors.
We refer to appendix in \cite{bou} for the choice of simple roots, they are
\[
\alpha_1=\varepsilon_2-\varepsilon_3, \alpha_2=\varepsilon_3-\varepsilon_4, \alpha_3=\varepsilon_1, \alpha_4=\frac{1}{2}\left(\varepsilon_1-\varepsilon_2-\varepsilon_3-\varepsilon_4\right).\]

       Highest root: $\check{\theta}=\varepsilon_1+\varepsilon_2=2 \alpha_1+3 \alpha_2+4 \alpha_3+2 \alpha_1.$

    Fundamental weights:
$$
\begin{aligned}
& \varpi_1=\varepsilon_1+\varepsilon_2=2 \alpha_1+3 \alpha_2+4 \alpha_3+2 \alpha_4 \\
& \varpi_2=2 \varepsilon_1+\varepsilon_2+\varepsilon_3=3 \alpha_1+6 \alpha_2+8 \alpha_3+4 \alpha_4 \\
& \varpi_3=\frac{1}{2}\left(3 \varepsilon_1+\varepsilon_2+\varepsilon_3+\varepsilon_4\right)=2 \alpha_1+4 \alpha_2+6 \alpha_3+3 \alpha_4 \\
& \varpi_4=\varepsilon_1=\alpha_1+2 \alpha_2+3 \alpha_3+2 \alpha_1 .
\end{aligned}
$$    

    From the expression of Hamiltonians \eqref{778}, the numbers $m_k$ are essentially the same as the maximal number $r_i$ such that $f_\theta ^{r_i} v_i\neq 0$. Notice that this is equivalent to the maximal number $r_i$ such that $\varpi_i-r_i\check{\theta}$ is a weight of the fundamental representation with highest weight $\varpi_i$.  We fix the usual partial order on weights with $\mu\leq \lambda$ if and only if $\lambda-\mu=\sum_ic_i\alpha_i$ with $c_i\in\Z_{\geq0}$. 
    
    For $i=1$, $\varpi_1-2\check{\theta}=-\varpi_1=w_0\varpi_1$ gives $r_1=2$. 
    For $i=2$, $\varpi_2-3\check{\theta}=-3\alpha_1-3\alpha_2-4\alpha_3-2\alpha_4$, compare the order: $\varpi_2\geq\varpi_2-3\check{\theta}\geq-\varpi_2$,  and $\varpi_2-4\check{\theta}\leq-\varpi_2$ gives $r_2=3$.
    Similar discussion shows $r_3=2$, $r_4=1$.  Labeling these numbers on Dynkin diagram gives the diagram
    \begin{align*}\underset{\mathclap{2}}{\circ} - \underset{\mathclap{3}}{\circ} \Rightarrow \underset{\mathclap{2}}{\circ} - \underset{\mathclap{1}}{\circ}. \end{align*}
\\
For type $E_8$,  we identify 
$\mathfrak{h}^*=\mathbb{R}^8$.
Choose simple roots as in \cite{bou}: 
$$
\begin{gathered}
\alpha_1=\frac{1}{2}\left(\varepsilon_1+\varepsilon_8\right)-\frac{1}{2}\left(\varepsilon_2+\varepsilon_3+\varepsilon_4+\varepsilon_5+\varepsilon_6+\varepsilon_7\right), \\
\alpha_2=\varepsilon_1+\varepsilon_2, \alpha_3=\varepsilon_2-\varepsilon_1, \alpha_4=\varepsilon_3-\varepsilon_2, \alpha_5=\varepsilon_4-\varepsilon_3, \\
\alpha_6=\varepsilon_5-\varepsilon_4, \alpha_7=\varepsilon_6-\varepsilon_5,
\alpha_8=\varepsilon_7-\varepsilon_6.
\end{gathered}
$$

Highest root $\check{\theta}=\varepsilon_7+\varepsilon_8=2 \alpha_1+3 \alpha_2+4 \alpha_3+6 \alpha_4+5 \alpha_5+4 \alpha_6+3 \alpha_7+2 \alpha_8.$\\

The first two fundamental weights are:
\begin{align*}
\varpi_1 & =2 \varepsilon_8=4 \alpha_1+5 \alpha_2+7 \alpha_3+10 \alpha_4+8 \alpha_5+6 \alpha_6+4 \alpha_7+2 \alpha_8 \\
\varpi_2 & =\frac{1}{2}\left(\varepsilon_1+\varepsilon_2+\varepsilon_3+\varepsilon_4+\varepsilon_5+\varepsilon_6+\varepsilon_7+5 \varepsilon_8\right) \\
& =5\alpha_1+8\alpha_2+10 \alpha_3+15 \alpha_4+12 \alpha_5+9 \alpha_6+6 \alpha_7+3 \alpha_8.
\end{align*}
We have   $-\varpi_1=w_0\varpi_1\leq\varpi_1-2\check{\theta}\leq\varpi_1$ and $\varpi_1-3\check{\theta}\leq-\varpi_1$, which give $r_1=2$.  Besides, $-\varpi_2\leq\varpi_2-3\check{\theta}\leq \varpi_2$ and $\varpi_2-4\check{\theta}<-\varpi_2$ gives $r_2=3$. A routine calculation shows $r_3=4$, $r_4=6$, $r_5=5$, $r_6=4$, $r_7=3$, $r_8=2$. 
Labeling these numbers on Dynkin diagram gives the diagram then we obtain:
\begin{align*}\underset{\mathclap{2}}{\circ} - \underset{\mathclap{3}}{\circ} - \underset{\mathclap{4}}{\circ} - \underset{\mathclap{5}}{\circ} - \underset{\mathclap{6}}{\overset{\overset{\textstyle\circ_{\mathrlap{3}}}{\textstyle\vert}}{\circ}} - \underset{\mathclap{4}}{\circ} - \underset{\mathclap{2}}{\circ} .
\end{align*}
For other types, similar discussions give the diagrams \eqref{xiaoxiao} in Figure 1.

    \end{proof}

\section{Further Discussions}
\label{1111}

\subsection{Classical Types and $\mathfrak{g}_2$ }\label{345}

In this subsection, we compute Hamiltonians for type $ABCD$ and $G_2$ in more explicit form. Fix a complex simple classical Lie algebra $\mathfrak{g}$, consider an element in its minimal nilpotent orbit: $A=\left(a_{i j}\right)\in\mathcal{O}_{\text{min}}$, let  $A_{k, k} \text { be the left-upper }k\times k \,\text{submatrix of } A \text {. }$ Then from part two of theorem \ref {998}, proposition \ref{999} and remark \ref{1000}, we have:
\begin{thm}\label{3.1}
The Hamiltonians of the integrable system on $\overline{\mathcal{O}}_{\text{min}}$ for each $\mathfrak{g}$ are giving  by the follwing:

\begin{enumerate}
\item Type $A_n$,  let $A=(a_{ij})_{1\leq i,j\leq n+1}\in \mathfrak{sl}(n+1,\mathbb{C}),$\,there are $n=h^\vee-1$ Hamiltonians: $\mathrm{Tr}A_{k,k}$, for $1\leq k\leq n$. 
\item Type $B_n$, let  $A=(a_{ij})_{1\leq i,j\leq 2n+1}\in\mathfrak{so}(2n+1,\mathbb{C})$, there are $2n-2=h^\vee-1$ Hamiltonians:  $\operatorname{Tr} (A_{k, k})$ for $1\leq k\leq n$, $\, \operatorname{Tr}\left(\bigwedge^2 A_{k, k}\right)$ for $2\leq k\leq n-1$, 
\item Type $C_n$, let $A=(a_{ij})_{1\leq i, j\leq 2n}\in \mathfrak{sp}(2n,\mathbb{C})$, there are $n=h^\vee-1$ Hamiltonians: $\mathrm{Tr}(A_{k,k})$ for $1\leq k\leq n$.
\item Type $D_n$, let $A=(a_{ij})_{1\leq i, j\leq 2n}\in\mathfrak{so}(2n,\mathbb{C})$, there are $2n-3=h^\vee-1$ Hamiltonians: $\operatorname{Tr}( A_{k, k})$ for $1\leq k\leq n$, $\operatorname{Tr}\left(\bigwedge^2 A_{k, k}\right)$ for $2\leq k\leq n-2$.
\end{enumerate}
\end{thm}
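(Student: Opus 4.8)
The plan is to assemble three ingredients already in hand: the description of the Hamiltonians attached to $\varpi_k$ as the matrix coefficients $\langle v^{\varpi_k},x^n v_{\varpi_k}\rangle$ (part (2) of Theorem~\ref{998} together with Proposition~\ref{145}), their evaluation as principal-block determinants in the classical defining representations (Lemma~\ref{xiaoai} and Proposition~\ref{999}), and the collapse forced by $x^2=0$ on $\mathcal{O}_{\min}$ (Remark~\ref{1000}).

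First I would fix, for each classical $\mathfrak{g}$, the matrix realization of the defining representation and record that the minimal nilpotent element $x$ satisfies $x^2=0$ there, with $\mathrm{rank}(x)=1$ in types $A$ and $C$ (the highest root vector is a single matrix unit) and $\mathrm{rank}(x)=2$ in types $B$ and $D$ (the highest root vector $e_{\varepsilon_1+\varepsilon_2}$ has rank two and still squares to zero). For a vertex $k$ whose fundamental representation is the exterior power $\bigwedge^k$ of the defining representation — namely $1\le k\le n$ for $A_n,C_n$, $1\le k\le n-1$ for $B_n$, and $1\le k\le n-2$ for $D_n$ — Lemma~\ref{xiaoai} with Proposition~\ref{999} identifies the order-$m$ Hamiltonian of $\varpi_k$ with the coefficient of $t^m$ in $\det\bigl[(\exp(tx))_{k\times k}\bigr]$. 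Using $x^2=0$ this equals $\det(1+t\,x_{k\times k})=\sum_{m\ge 0}t^m\,\mathrm{Tr}\bigl(\textstyle\bigwedge^m x_{k\times k}\bigr)$, so the only Hamiltonians it produces are $\mathrm{Tr}(x_{k\times k})$ in degree one and, when $\mathrm{rank}(x)=2$, $\mathrm{Tr}(\bigwedge^2 x_{k\times k})$ in degree two, the latter vanishing identically for $k=1$. This yields the full list in types $A$ and $C$, and in types $B,D$ it yields $\mathrm{Tr}(A_{k,k})$ together with $\mathrm{Tr}(\bigwedge^2 A_{k,k})$ over exactly the stated ranges of $k$.

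The vertices not covered this way are the spinor vertex $k=n$ of $B_n$ and the two half-spin vertices $k=n-1,n$ of $D_n$, where $e_1\wedge\dots\wedge e_k$ is no longer the highest weight vector of an irreducible fundamental representation, so Lemma~\ref{xiaoai} does not apply. For these I would combine two observations: Lemma~\ref{xiaoai} and the rank bound force the exterior-power vertices to account for exactly $2n-3$ functions in type $B$ (resp. $2n-5$ in type $D$), while $\sum_k m_k=h^\vee-1$ from Theorem~\ref{998} (equivalently Figure~\ref{xiaoxiao}) gives $2n-2$ (resp. $2n-3$) in total; hence each remaining vertex contributes exactly its order-one Hamiltonian, which by Proposition~\ref{xiaobaobao}(1) is the linear form $\varpi_k\in\mathbb{C}[\mathfrak{g}]$. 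Expanding $\varpi_k$ in the matrix entries of $\mathfrak{so}_{2n+1}$ (resp. $\mathfrak{so}_{2n}$) rewrites it, up to a nonzero scalar and addition of previously listed linear Hamiltonians, as $\mathrm{Tr}(A_{n,n})$ (resp. as $\mathrm{Tr}(A_{n-1,n-1})$ and $\mathrm{Tr}(A_{n,n})$); since passing from $\{\varpi_1,\dots,\varpi_n\}$ to $\{\mathrm{Tr}(A_{1,1}),\dots,\mathrm{Tr}(A_{n,n})\}$ is an invertible change of coordinates on $\mathfrak{h}^*$ and leaves the $\bigwedge^2$-functions untouched, the resulting collection generates the same integrable system. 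A final tally against $h^\vee-1$ confirms the counts $n$, $2n-2$, $n$, $2n-3$ in the four cases.

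The main obstacle is precisely the spinor and half-spin vertices: there is no exterior-power model to feed into Lemma~\ref{xiaoai}, so one must genuinely import the vanishing of the higher Hamiltonians there from the weight-diagram / Heisenberg-basis analysis underlying Theorem~\ref{998}, and then recognize the surviving linear functional as a trace of a principal block of $A$. This is a bookkeeping point about the chosen matrix form of $\mathfrak{so}$ rather than a deep one, but it is the step that is least automatic.
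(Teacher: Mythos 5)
Your proposal is correct and follows essentially the same route as the paper, whose proof is precisely the assembly of Theorem~\ref{998}(2), Proposition~\ref{999} and Remark~\ref{1000}: the exterior-power vertices collapse via $x^2=0$ to $\det(1+t\,x_{k\times k})$, and the spinor/half-spin vertices are handled by importing the vanishing of higher-order Hamiltonians from the Heisenberg/weight-diagram analysis behind the $m_k$. Your added bookkeeping for the spinor vertices --- reading off $m_k=1$ from Figure~1 and rewriting the linear form $\varpi_k$ as a trace of a principal block up to an invertible linear change --- is a detail the paper leaves implicit, but it is the same argument, not a different one.
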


We remark that theorem \ref{3.1} can also be calculated directly for each type. We discuss type $A$, $C$ and $D$ in the following. \\
\textbf{Type} $\mathbf{A_n}$: Let $\mathfrak{g}=\mathfrak{sl}(n+1,\mathbb{C})$, $G=SL(n+1)$, the minimal orbit is the orbit of maximal root vector $e_{\theta^\vee}$ under the adjoint action, i.e, 
$\mathcal{O}_{\min}=G\cdot e_\theta=\{g\cdot \begin{pmatrix}
0 & \cdots & 0 & 1 \\
 \vdots& \ddots & & 0 \\
0& &\ddots & \vdots \\
0 & & \cdots & 0
\end{pmatrix} g^{-1}\}_{g\in G}$.
Let $A \in \mathcal{O}_{\text {min }}$, then $A^2=0$, rank $A=1$. Write $A=x y^t=\left(x_i y_j\right)$, where $x=\left(\begin{array}{c}x_1 \\ \vdots \\ x_{n+1}\end{array}\right), y=\left(\begin{array}{c}y_1 \\ \vdots \\ y_{n+1}\end{array}\right)$
, then $ \exp (t^{-1} A)=1+t^{-1} A$ . For $1\leq k\leq n+1$, the $k$-th fundamental representation is $V_{\varpi_k}=\bigwedge^k\mathbb{C}^{n+1}$, its highest weight vector is $e_1 \wedge \cdots \wedge e_k$. \\
\[\begin{aligned} \exp \left(t^{-1} A\right)\left(e_1 \wedge \cdots \wedge e_k\right) & =\left(1+t^{-1} A\right)\left(e_1 \wedge \cdots \wedge e_k\right)\\&=\left(e_1+t^{-1} y_1 x\right) \wedge \cdots \wedge\left(e_k+t^{-1} y_k x\right)\\&=e_1 \wedge \cdots \wedge e_k+t^{-1} \sum_{i=1}^k y_i \cdot e_1 \wedge \cdots \wedge x\wedge \cdots \wedge e_k+\cdots\\& =\left(1+t^{-1} \sum_{i=1}^k y_i x_i\right)\left(e_1 \wedge \cdots \wedge e_k\right)+\text { higher terms }\end{aligned}\]The higher terms vanish by linear dependence, hence we have
$\left\langle v^k, \exp \left(t^{-1} x\right) v_k\right\rangle=1+t^{-1} \sum_{i=1}^k y_i x_i=1+t^{-1}\mathrm{Tr}A_{kk}$. 
Since $\mathrm{Tr}(A_{n+1,n+1})=0$, we obtain all the Hamiltonians are 
$\langle(e_1\wedge e_2\cdots \wedge e_k)^*,\exp(t^{-1}A)e_1\wedge e_2\wedge\cdots\wedge e_k\rangle= \mathrm{Tr}(A_{k,k})$ for $1\leq k\leq n$. Observe they are exactly the linear term of Gelfand Zetlin system.\\

\textbf{Type} $\mathbf{C_n}$: Let $\mathfrak{g}=\mathfrak{sp}(2n,\mathbb{C})$, root space of maximal root $\mathfrak{g}_{\theta^\vee}=\mathbb{C}E_{1,n+1}$. For $1\leq k\leq n$, highest weight vector of the fundamental representations $V_{\varpi_k}$ are $v_k=e_1\wedge\cdots \wedge e_{k-1}\wedge e_{k}$, see \cite[pp.261]{fulton13}.
Let $A \in \mathcal{O}_{\text {min }}$, then $A^2=0$, rank $A=1, A=x y^t=\left(x_i y_j\right)$
Similar as type $A$, we have 
$\left\langle v^k, \exp \left(t^{-1} x\right) v_k\right\rangle=1+t^{-1} (\sum_{i=1}^{k} y_i x_i)=1+t^{-1}(\mathrm{Tr}A_{k,k})$. Therefore the Hamiltonians are  $\mathrm{Tr}(A_{k,k})$ for $1\leq k\leq n$. \\

\textbf{Type} $\mathbf{G_2}$:
Select a basis of $\mathfrak{g}_2$ by $\{H_1,H_2, X_1,X_2,\cdots, X_6,Y_1,Y_2 \cdots Y_6\}$ as in the convention 
in \cite[pp.346]{fulton13}, note that  each of the brackets $\left[X_i, Y_i\right]$ will be the distinguished element of $\mathfrak{h}$ corresponding to the root $\alpha_i$, and the highest root vector is $e_\theta=X_6$. From proposition \ref{xiaobaobao}, we know there are three Hamiltonians: two linear of form $\varpi_1$, $\varpi_2$ and one second order adjoint Hamiltonian associated with the fundamental weight $\varpi_2=3\alpha_1+2\alpha_2$.  In this case, $i=2$, $R_{P_2}=\alpha_1$, $R-R_{P_2}= \{\alpha_2, \alpha_3, \alpha_4, \alpha_5,\alpha_6\}$. 
Note that we have 
\begin{align*}
&H_3=H_1+3 H_2, \quad H_4=2 H_1+3 H_2 \\&
H_5=H_1+H_2, \quad  H_6=H_1+2 H_2
\end{align*}

Therefore
\begin{align*}&
\langle \varpi_2,H_{\alpha_1}\rangle=0, \quad \langle \varpi_2,H_{\alpha_2}\rangle=1, \quad \langle \varpi_2,H_{\alpha_3}\rangle=3\\&
\langle \varpi_2,H_{\alpha_4}\rangle=3,\quad \langle \varpi_2,H_{\alpha_5}\rangle=1,\quad
\langle \varpi_2,H_{\alpha_6}\rangle=2.
\end{align*}
Hence by the second part of proposition \ref{xiaobaobao}, this Hamiltonian is of the following form:
\begin{align*}
{}& \omega_2^2 +\langle \omega_2,\alpha_2^\vee\rangle X_2^\vee Y_2^\vee+\langle \alpha_3^\vee,\omega_2\rangle X_3^\vee Y_3^\vee+\langle \alpha_4^\vee,\omega_2\rangle X_4^\vee Y_4^\vee+\langle \alpha_5^\vee,\omega_2\rangle X_5^\vee Y_5^\vee+\langle \alpha_6^\vee,\omega_2\rangle X_6^\vee Y_6^\vee \\
&=\omega_2^2+X_2^\vee Y_2^\vee+3X_3^\vee Y_3^\vee+3X_4^\vee Y_4^\vee+X_5^\vee Y_5^\vee+2X^\vee_6Y^\vee_6
\end{align*}

We mention that there is another way to calculate the second order Hamiltonian directly. 
Denote an element $A$ in the minimal nilpotent cone by \[A=h_1 H_1+h_2 H_2+x_1 X_1+\cdots +x_6 X_6+y_1 Y_1+\cdots+ y_6Y_6.\] 
Note that for $X,Y$ in Lie algebra,
\[\operatorname{Ad}_{\exp X}(Y)=\exp \left(\operatorname{ad}_X\right)(Y)=Y+[X, Y]+\frac{1}{2 !}[X,[X, Y]]+\frac{1}{3 !}[X,[X,[X, Y]]]+\cdots\] 
Hence to compute the second order term of $\langle\mathrm{exp}(t^{-1}A)e_\theta, e_\theta^*\rangle$, it suffices to compute $([A,[A,X_6]],X_6^*)$.
Using the multiplication table in \cite{fulton13}, we have:
\[[A,X_6]=h_2X_6-y_2X_5-y_3X_4+y_4X_3+y_5X_2-y_6(H_1+2H_2)\]

\begin{equation}
\begin{aligned}
{}&
\langle [A, X_5],X_6^*\rangle =-x_2\quad
\langle [A, X_4],X_6^*\rangle =-3x_3\quad
\langle [A, X_3],X_6^*\rangle =3x_4\\&
\langle [A, X_2],X_6^*\rangle =x_5\quad
\langle [A, H_1],X_6^*\rangle =0\quad
\langle [A, H_2],X_6^*\rangle =-x_6
\end{aligned}
\end{equation}Therefore the desired second order Hamiltonian is:
\begin{equation}
\begin{aligned}
\langle[A,[A,X_6]],X_6^*\rangle={}&\langle h_2[A,X_6]-y_2[A,X_5]-y_3[A,X_4]+y_4[A,X_3]+y_5[A,X_2]-y_6[A,H_1+2H_2],X_6^*\rangle\\&=h_2^2+x_2y_2+3x_3y_3+3x_4y_4+x_5y_5+2x_6y_6.
\end{aligned}
\end{equation}

\subsection{Discussions for quantization}

There are two guesses of quantization for the Hamiltonians, which means lifting them to a noncommutative algebra, such that after lifting they  pairwise commute under the usual commutator. 

\textbf{Travkin's quantization.}
Let $\widetilde{Cas}:=\sum_{i=1}^{l} \frac{1}{\kappa(\alpha_i,\alpha_i)}e_i\otimes e_i\in U(\mathfrak{g})\otimes U(\mathfrak{g})$, $\kappa$ be the Killing form. 
 Roman Travkin conjectures that the quantization of the functions  $\langle v^m,x^nv_m\rangle$ in $\C[\mathfrak{g}]$ may be 
$\widetilde{\mathrm{Cas}}_{(2)}^n\langle\widetilde{\mathrm{Cas}}_{(1)}^n v_m, v^m\rangle$ in the universal envelopping algebra $U(\mathfrak{g})$,

In particular,  the degree $n=1$ quantization are Cartan $H_i$, the degree 2 quantization associated to fundamental weight $\omega_i$ are 
\[H_i^2+\sum_{\alpha\in R^+-R_{P_i}}\frac{\langle \omega_i,\alpha^\vee\rangle}{\kappa(\alpha,\alpha)} E_\alpha E_{-\alpha}\in U(\mathfrak{g})/\mathcal{J},\]where $\mathcal{J}$ is the Joseph's ideal.  The degree-1 $H_i$ certainly commutes with each other, and the degree one also comuutes with degree-2 ,which following from following computation:

\begin{align*}
H_\beta E_\alpha E_{-\alpha}&=(\langle \beta^\vee,\alpha\rangle E_\alpha+E_\alpha H_\beta)E_{-\alpha}\\&=\langle\beta^\vee,\alpha\rangle E_\alpha E_{-\alpha}+E_\alpha(\langle \beta^\vee,-\alpha\rangle E_{-\alpha}+E_{-\alpha}H_\beta)\\&=E_\alpha E_{-\alpha}H_\beta.
\end{align*}

Hence $[H_\beta,E_\alpha E_{-\alpha}]=0$,  i.e, $[ H_j, H_i^2+\sum_{\alpha\in R^+-R_{P_i}}\frac{\langle \omega_i,\alpha^\vee\rangle}{\kappa(\alpha,\alpha)} E_\alpha E_{-\alpha}]=0$. 
For higher degrees, one may seek to verify if these commutators falls inside Joseph's ideal $\mathcal{J}$ \cite{Jos} , but this is overwhelming since the generators of $\mathcal{J}$ are difficult to write explicitly. 

\textbf{Heisenberg orbit quantization.}
This proposal conjectures that the quantization of the $r$-th matrix coefficient function associated to fundamental weight $\varpi_i$ be 

\[\sum_{k=1}^{l}\frac{1}{k!}\widetilde{\mathrm{Kos}}^k_{(2)}\langle \widetilde{\mathrm{Kos}}^k_{(1)}f_\theta^r v_{\varpi_i},v^{\varpi_i}\rangle\in U(n^\star)\]

where $\widetilde{\operatorname{Kos}}=\sum_{i=1}^{2h^\vee-3}\frac{1}{\kappa(\varphi_i,\varphi_i)}E_i\otimes E_i\in U(\mathfrak{n}^\star)\otimes U(\mathfrak{n}^\star)$ (see the notations in \ref{778}).

\appendix
\section{Calculation in matrix expression for type $B$,$D$}
\subsection{Type D}
\textbf{Type} $\mathbf{D_n}$: Let $\mathfrak{g}=\mathfrak{so}(2n,\mathbb{C})=\mathfrak{so}(V)$, $V=\mathrm{span}\{e_1,\cdots, e_{2n}\}$. The Cartan subalgebra $\mathfrak{h}$ is chosen to be  generated by the $n$   $2 n \times 2 n$ matrices $H_i=E_{i, i}-$ $E_{n+i, n+i}$ . Take as basis for the dual vector space $\mathfrak{h}^*$ the dual basis $L_j$, where $\left\langle L_j, H_i\right\rangle=\delta_{i, j}$.
Take simple roots $\alpha_i=L_i-L_{i+1}$ for $1 \leq i \leq n-1$, and $\alpha_n=L_{n-1}+L_n$.  The fundamental weights $\varpi_i$ are put in the order: $\varpi_i=L_1+\cdots+L_i$ for $1 \leq i \leq n-2$, and
$$
\varpi_{n-1}=\left(L_1+\cdots+L_{n-1}-L_n\right) / 2, \quad \varpi_n=\left(L_1+\cdots+L_n\right) / 2
$$

we have $\varpi_i\left(H_{\alpha_j}\right)=\delta_{i, j}$.

The $\text { highest root } \theta=(1,1,0, \ldots, 0)$, the associated root vector is $\mathfrak{g}_\theta=\mathbb{C} \cdot(E_{1, n+2}-E_{2, n+1})$.
Let $A \in \mathcal{O}_{\min }$ , then $A^2=0$. 

When $1 \leq k \leq n-2$,
The highest weight of fundamental representation $V_{\varpi_k}$ is $v_k=e_1 \wedge \cdots \wedge e_{k-1} \wedge e_{k}$ (see \cite[pp.288]{fulton13}).
Then \[\begin{aligned} \left(1+A t^{-1}\right)\left(e_1 \wedge \cdots \wedge e_{k-1} \wedge e_{k}\right)={}&\left(e_1+t^{-1} A e_1\right) \wedge \cdots \wedge\left(e_{k}+t^{-1} A e_{k}\right) \\ &=e_1 \wedge \cdots \wedge e_{k}+t^{-1}\sum_{i=1}^{k}\left(e_1 \wedge \cdots \wedge A e_i \wedge \cdots \wedge e_{k}\right)\\&+t^{-2} \sum_{1\leq i<j\leq k} e_1 \wedge \cdots \wedge A e_i \wedge \cdots \wedge Ae_j\wedge \cdots \wedge e_{k} \end{aligned}\]
$\text { The higher terms vanish since } \mathrm{rank}\,A\leq 2 . $ Since we will pair the above with $v^k=(e_1\wedge\cdots\wedge e_{k-1}\wedge e_{k})^*$, we need only calculate the coefficient of $e_1 \wedge \cdots \wedge e_{k-1} \wedge e_{k}$ in the above term.
Notice that\[ Ae_i\wedge Ae_j= \begin{pmatrix}a_{1i} \\ \vdots\\ a_{ii}\\ \vdots \\ a_{ji}\\ \vdots \\ a_{2n,i}\end{pmatrix}    \bigwedge    \begin{pmatrix}a_{1j} \\ \vdots\\ a_{ij}\\ \vdots \\ a_{jj}\\ \vdots \\ a_{2n,j}\end{pmatrix}     \]and the coefficient of its $e_i\wedge e_j$ term is $\left|\begin{array}{ll}a_{i i} & a_{i j} \\ a_{j i} & a_{j j}\end{array}\right|$, so we arrive at

\begin{equation}
\langle v^k, \exp (t^{-1} x) v_k\rangle=
1+t^{-1} (\sum_{i=1}^{k} a_{i i})+\\t^{-2} \sum_{1\leq i<j\leq k}\left|\begin{array}{ll}a_{i i} & a_{i j} \\ a_{j i} & a_{j j}\end{array}\right| e_1 \wedge \cdots \wedge e_i \wedge \cdots \wedge e_j \wedge \cdots\wedge e_{k-1} \wedge e_{k}
\\
\end{equation}

Which is equal to the following \[1+t^{-1} (\operatorname{Tr} A_{k, k})+t^{-2} (\operatorname{Tr}\left(\bigwedge^2 A_{k, k}\right))\]

When $k=n-1, n$ , the fundamental representations $V_{\varpi_k}$ are the Spinor representations. In the following we use clifford algebra representation to calculate the corresponding hamiltonians.
\begin{define}Denote the bilinear form $Q$ by 
\[\quad Q(x, y)=x^t\left(\begin{array}{cc} 0 & I_n \\ I_n &0 \end{array}\right)    y ,\]Then $V$ can be splited into direct sum of isotropic subspaces: \[
 V=W \oplus W^{\prime}\]
Denote $C(Q)$ the Clifford algebra associated to $Q$ and $V$. We regard it as a Lie algebra, where the Lie bracket is induced from the standard commutator $[,]$ on the associated algebra structure.  For any $ w \in W$  we denote operator  $L_w \in \operatorname{End}(\dot{\bigwedge}W),\,L_w: \xi \mapsto \omega \wedge \xi   $. For any $ \theta \in W^*$, define  $D_\theta \in \operatorname{End}(\dot{\bigwedge}W), \, D_\theta: 1 \mapsto 0, \,  
 \omega \mapsto \theta(\omega) \in\mathbb{C} \,\text{ and satisfies the following relation}$ \[\xi \wedge \eta \mapsto D_\theta(\xi) \wedge \eta+(-1)^{\operatorname{deg}(\xi)} \xi \wedge D_\theta(\eta). \]
\end{define}
\begin{lma}We have lie algebra homorphisms: 
 \begin{equation}\label{1}
 \mathfrak { so }(Q) \congto  \bigwedge^2 V \hookrightarrow  C(Q)    \congto  \operatorname{End}(\dot{\bigwedge} W) \end{equation}, where the first and last map are isomorphisms, the middle map is embedding. The map of basis are as following:
\[ E_{i, j}-E_{n+j, n+i} \mapsto \frac{1}{2}(e_i \wedge e_{n+j})  \mapsto \frac{1}{2}(e_i \cdot e_{n+j}-\delta_{i j}) \mapsto\frac{1}{2}(L_{e_i} \circ D_{2 e_j^*}-\delta_{i j} \mathrm{Id}) \]
\[ E_{i, n+j}-E_{j, n+i} \mapsto\frac{1}{2} (e_i\wedge e_j) \mapsto \frac{1}{2} e_i \cdot e_j \longmapsto \frac{1}{2}L_{e_i} \circ L_{e_j}  \]
 \[E_{n+i, j}-E_{n+j, i}\mapsto\frac{1}{2}(e_{n+i} \wedge e_j)\mapsto \frac{1}{2}(e_{n+i}\cdot e_j-\delta_{i j}) \mapsto \frac{1}{2}\left(D_{2 e_i^*} L_{e_j}-\delta_{ij}\mathrm{Id}\right) \]
\end{lma}

Notice that the image of the map $\bigwedge^2 V \hookrightarrow  C(Q)$ are of even order in our circumstance, and there is an isomorphism 

 \[\quad(C Q)^{\text {even }} \cong \mathfrak{gl}\left(\bigwedge^{\text {even }} W\right) \oplus \mathfrak{gl}\left(\bigwedge^{\text {odd }} W\right). \]
 
We can rewrite the homomorphism \eqref{1} as 
\[\mathfrak { so }(Q) \congto  \bigwedge^2 V \hookrightarrow  C(Q)^{\text{even}}\congto\mathfrak{gl}\left(\bigwedge^{\text {even }} W\right) \oplus \mathfrak{gl}\left(\bigwedge^{\text {odd }} W\right)\]The two Spinor representations are exactly the two decompositions $\bigwedge^{\text{even}}W$ and $\bigwedge^{\text{odd}}W$. To be explicit, 
we need discuss separatedly according to the parity of $n$. 
If $n$ is even, \begin{enumerate}\item The first Spinor representation is $V_{\varpi_n}=\bigwedge^{\text {even }} W=\Gamma_{\frac{1}{2}\left(L_1+\cdots+L_n\right)}$,with highest weight vector, $e_1 \wedge \cdots \wedge e_n .$
 \begin{enumerate}
\item
If $n \neq j$, then
$$
\begin{aligned}
 D_{2 e_j^*}\left(e_1 \wedge \cdots \wedge e_n\right)={} &
 D_{2 e_j^*}\left(e_1 \wedge \cdots \wedge e_{j-1} \wedge e_{j+1} \wedge \cdots \wedge e_n\right) \wedge e_j \cdot(-1)^{n-j}\\&+(-1)^{j-1}\left(e_1 \wedge \cdots  e_{j-1} \wedge e_{j+1} \wedge \cdots \wedge e_n\right) \wedge D_{2 e_j^*}\left(e_j\right) \\
= & (D_{2 e_j^*}(e_1 \wedge \cdots  e_{j-1} \wedge e_{j+1} \wedge \cdots \wedge e_{n-1}) \wedge e_n+ \\ &(-1)^{j-1} e_1 \wedge \cdots \wedge e_{j-1} \wedge e_{j+1} \wedge \cdots \wedge e_{n-1} \wedge D_{2 e_j^*}(e_n)))\wedge e_j \cdot(-1)^{n-j}+ \\
& 2 \cdot(-1)^{j-1}(e_1 \wedge \cdots \wedge e_{j-1} \wedge e_{j+1} \wedge \cdots \wedge e_n)\\
= & \cdots \\
= & 2 \cdot(-1)^{j-1}\left(e_1 \wedge \cdots \wedge e_{j-1} \wedge e_{j+1} \wedge \cdots \wedge e_n\right)
\end{aligned}
$$
The above equation follows from $D_{2 e_j^*}\left(e_n\right)=0$.
\begin{enumerate}\item If $i \neq j$, then $L_{e_i} \circ D_{2 e_j^*}\left(e_1 \wedge \cdots \wedge e_n\right)=2 \cdot(-1)^{j-1} \cdot(-1)^{j-1} e_i \wedge\left(e_1 \wedge \cdots \wedge e_{j-1} \wedge e_{j+1} \wedge \cdots \wedge e_n\right)=0$.
\item If $i=j$ then $L_{e_i} \circ D_{2 e_j^*}\left(e_1 \wedge \cdots \wedge e_n\right)=2 \cdot (-1)^{j-1} \cdot(-1)^{j-1} e_1 \wedge \cdots \wedge e_n=2 e_1 \wedge \cdots \wedge e_n$
\end{enumerate}
\item If $n=j \neq i$, then it follows similarly $L_{e_i} \circ D_{2 e_j^*}\left(e_1 \wedge \cdots \wedge e_n\right)=0$

\item If  $i=j=n$ , then  
 \[\begin{aligned}
      L_{e_n} D_{2 e_n^*}\left(e_1 \wedge \cdots \wedge e_n\right) & =L_{e_n}\left(D_{2 e_n^*}\left(e_1 \wedge \cdots \wedge e_{n-1}\right) \wedge e_n+(-1)^{n-1} e_1\wedge\cdots\wedge e_{n-1}\wedge D_{2e_n^*}(e_n))  \right. \\
& =L_{e_n}\left(0+(-1)^{n-1} \cdot 2 \cdot e_1 \wedge \cdots \wedge e_{n-1}\right)
 \\
& =(-1)^{n-1} \cdot(-1)^{n-1} \cdot 2 \cdot e_1 \wedge \cdots \wedge e_{n-1} \wedge e_n \\
& =2 e_1 \wedge \cdots \wedge e_n \\
\end{aligned}\]

Substitute into the equation of Schubert divisor, we have
\begin{align*}
 \left(1+A t^{-1}\right)&\left(e_1 \wedge \cdots \wedge e_n\right)= ( 1+t^{-1}(\sum_{1 \leqslant i, j \leqslant n} a_{i j}\left(E_{i, j}-E_{n+j, n+i}\right)+
 \sum_{1 \leqslant i< j \leqslant n} a_{i j}\left(E_{i, n+j}-E_{j i n+i}\right)\\&+\sum_{1 \leqslant i <j\leq n} a_{i j}\left(E_{n+i, j}-E_{n+j, i}\right)))\left(e_1 \wedge \cdots \wedge e_n\right))\\=&
(1+t^{-1}(\sum_{1\leq i\neq j\leq n}\frac{1}{2}a_{ij}(L_{e_i}\circ D_{2e_j^*}-\delta_{ij}\mathrm{Id})+\sum_{1\leq i\leq n} \frac{1}{2}a_{ii}(L_{e_i}\circ D_{2e_i^*}-\delta_{ii}\mathrm{Id})\\&+\sum_{1\leq i<j\leq n}\frac{1}{2}a_{n+i,j}L_{e_i}\circ L_{e_j} +\sum_{1\leq i<j\leq n}\frac{1}{2}a_{i,j+n}(D_{2e_i^*}\circ L_{e_j}-\delta_{ij}\mathrm{Id}) ))(e_1\wedge\cdots\wedge e_n)\\=&
(1+\frac{1}{2}t^{-1}\sum_{1\leq i\leq n}a_{ii})e_1\wedge\cdots\wedge e_n
\end{align*}
\end{enumerate}
\item
The second Spinor representation is
$V_{\omega_{n-1}}=\Gamma_{\frac{1}{2}\left(L_1+\cdots+L_n-L_n\right)} =\bigwedge^{\text {odd }} W$ with highest weight vector $e_1 \wedge \cdots \wedge e_{n-1}. $

\begin{enumerate}\item If $i , j\leq n-1$, $i\neq j$ ,  then it follows similarly \[L_{e_i} D_{2 e_j^*}\left(e_1 \wedge \cdots \wedge e_{n-1}\right)=0\]
\item If $i=j \leqslant n-1$, then same calculation as previous gives
\[
L_{e_i} \circ D_{2 e_i} *\left(e_1 \wedge \cdots \wedge e_{n-1}\right)=2 e_1 \wedge \cdots \wedge e_{n-1} . \]\item If $j\leq n-1$, $i=n$, then 
 \[L_{e_i} D_{2 e_j^*}\left(e_1 \wedge \cdots \wedge e_{n-1}\right)= 2(-1)^{j-1}(-1)^{n-2}e_1\wedge\cdots \wedge e_{j-1}\wedge e_{j+1}\wedge\cdots\wedge e_n\]
This term will not contribute when pairing with $(e_1\wedge\cdots \wedge e_{n-1})^*$.

\item If $j=n$, then 
\begin{align*}&L_{e n} D_{2 e_n^*}\left(e_1 \wedge \cdots \wedge e_{n-1}\right)\\&=L_{e_n}\left(D_{2 e_n^*}\left(e_1 \wedge \cdots \wedge e_{n-2}\right) \wedge e_{n-1}+(-1)^{n-1}\left(e_1 \wedge \cdots \wedge e_{n-2}\right) \wedge D_{2 e_n^*}\left(e_{n-1}\right)\right)\\
& =L_{e_n}\left(D_{2 e_n^*}\left(e_1 \wedge \cdots \wedge e_{n-2}\right) \wedge e_{n-1}\right) \\
& =L_{e_n}\left(D_{2 e_n^*}\left(e_1 \wedge \cdots \wedge e_{n-3}\right) \wedge e_{n-2} \wedge e_{n-1}+(-1)^{n-2}\left(e_1 \wedge \cdots \wedge e_{n-3}\right) \wedge D_{2 e_n^*}\left(e_{n-2}\right) \wedge e_{n-1}\right) \\
& =\cdots=L_{e_n}\left(D_{2 e_n^*}\left(e_1\right)\right)=0 \end{align*}
Calculation as before yields
\[
\left\langle\left[1+\left(A t^{-1}\right)\right)\left(e_1 \wedge \cdots \wedge e_{n-1}\right),\left(e_1 \wedge \cdots \wedge e_{n-1}\right)^*\right\rangle=1+\frac{1}{2} \mathrm{Tr} A_{n-1, n-1} t^{-1}
\]

\end{enumerate}
\end{enumerate}
If $n$ is odd,
\[
V_{\varpi_{n-1}}=\bigwedge ^{\text{even}}W=\Gamma_{\frac{1}{2}\left(L_1+\cdots+L_{n-1}-L_n\right)} ,\quad V_{\varpi_n}=\bigwedge ^{\text{odd}}W=\Gamma_{\frac{1}{2}\left(L_1+\cdots+L_n\right)}
\]
The corresponding Hamiltonians are exactly the same as $n$ is odd.
In conclusion, the Hamiltonians are  for $\mathrm{Tr}(A_{kk})$ and $\operatorname{Tr}\left(\bigwedge^2 A_{k, k}\right)$ for $1\leq k\leq n-2$, and $\mathrm{Tr}(A_{n-1,n-1})$ ,$\mathrm{Tr}(A_{n,n})$ for Spinor representations. Notice that $\operatorname{Tr}\left(\bigwedge^2 A_{1, 1}\right)=0$, so there are actually $2n-3=h^\vee-1$ Hamiltonians.

\subsection{Type B}

Denote $\mathfrak{g}=\mathfrak{so}(2n+1,\mathbb{C})=\mathfrak{so}(V)$,  where $V$ is a complex vector space of dimension $2n+1$, and the standard bilinear form $Q$ on $V$ by \[\quad Q(x, y)=x^t\left(\begin{array}{ccc} 0 & I_n&0 \\ I_n &0&0\\0&0&1 \end{array}\right)y. \]
Decompose $V$ by
\[V=W\oplus W'\oplus U\]where $W$, $W'$ are isotropic subspace of dimension $n$, $U$ is of dimension 1. 
Elements in $\mathfrak{g}$ can be regarded as matrix block form 
$X=\left(\begin{array}{c|c|c}
A & B & E \\
\hline C & D & F \\
\hline G & H & 0
\end{array}\right)$, where 

$B$ and $C$ are skew-symmetric $n\times n$ matrices and $A$ and $D$ negative transposes of each other; and in addition $E=-{ }^t H, F=-{ }^t G$.

We take the Cartan subalgebra to be the subalgebra of  diagonal matrices in this representation, i.e, $\mathfrak{h}$ is generated by the $n$ matrices of shape $(2n+1)\times (2n+1)$,  $H_i=E_{i,i}-E_{n+i,n+i}$.
We will correspondingly take as basis for the dual vector space $\mathfrak{h}^*$ the dual basis $L_j$, where $\left\langle L_j, H_i\right\rangle=\delta_{i, j}$
Denote the maximal root $\theta=L_1+L_2$, the minimal nilpotent orbit is the conjugation of the root vector $\mathfrak{g}_\theta=\mathbb{C}(E_{1,n+2}-E_{2,n+1})$, so an element $A$ in the orbit is of rank 2 and $A^2=0$. 
For $k=1, \ldots, n-1$, the exterior power $\bigwedge^k V$ of the standard representation $V$ of $\mathfrak{s o}_{2 n+1} \mathbb{C}$ is the fundamental representation with highest weight $\varpi_k=L_1+\cdots+L_k$, highest weight vector is $e_1\wedge\cdots\wedge e_k$.
\[\begin{aligned} \left(1+A t^{-1}\right)\left(e_1 \wedge e_2\wedge \cdots \wedge e_{k-1} \wedge e_{k}\right){}&=\bigwedge_{i\in\{1,\cdots, k\}}\left(e_i+t^{-1} A e_i\right) \\ &=e_1 \wedge \cdots \wedge e_{k-1} \wedge e_{k}+t^{-1}\sum_{i=1}^{k}\left(e_1 \wedge \cdots \wedge A e_i \wedge \cdots \wedge e_{k}\right)\\&+t^{-2} \sum_{1\leq i<j\leq k} e_1 \wedge \cdots \wedge A e_i \wedge \cdots \wedge Ae_j\wedge \cdots \wedge e_{k} \end{aligned}\]After pairing the above term with $v^k=(e_1\wedge\cdots\wedge e_{k-1}\wedge e_{k})^*$, we get \[1+t^{-1} (\sum_{i=1}^{k} a_{i i})+t^{-2} \sum_{1\leq i<j\leq k}\left|\begin{array}{ll}a_{ii} & a_{ij} \\ a_{ji} & a_{jj}\end{array}\right| e_1 \wedge \cdots \wedge e_i \wedge \cdots \wedge e_j \wedge \cdots \wedge e_{k}\]Which equals to
\begin{equation}
\begin{split}
&1+t^{-1}( \operatorname{Tr} A_{k, k})+ t^{-2}( \operatorname{Tr}\left(\bigwedge^2 A_{k, k}\right))
\end{split}
\end{equation}
There is one Spinor representation left for $k=n+1$. We still have following isomorphism as associated algebras:

\[C(Q) \cong \operatorname{End}\left(\dot{\bigwedge} W\right) \oplus \operatorname{End}\left(\dot{\bigwedge} W^{\prime}\right)\]\[C(Q)^{\text {even }} \cong \operatorname{End}\left(\dot{\bigwedge} W\right)\]and get the Spinor representation by the embedding into Clifford algebra:

\[\mathfrak{s o}_{2 n+1} \mathbb{C}=\mathfrak{s o}(Q) \hookrightarrow C(Q)^{\text {even }} \cong \mathfrak{g l}\left(\dot{\bigwedge} W\right)\]
The representation $\dot{\bigwedge} W$ is the fundamental representation of $\mathfrak{s o}_{2 n+1} \mathbb{C}$ with highest weight
$$
\alpha=\frac{1}{2}\left(L_1+\cdots+L_n\right) .
$$

We write a basis of the Lie algebra by
$
X_{i, j}=E_{i, j}-E_{n+j, n+i} ,
Y_{i, j}=E_{i, n+j}-E_{j, n+i},
Z_{i, j}=E_{n+i, j}-E_{n+j, i},
U_i=E_{i, 2 n+1}-E_{2 n+1, n+i}
$,
and
$
V_i=E_{n+i, 2 n+1}-E_{2 n+1, i}
.$
We already know the action of $X_{i,j}$, $Y_{i,j}$, $Z_{i,j}$ as in the calculation of type D, now we determine the action of $U_i$ and $V_i$. 
\begin{lma}
The basis $U_i$ in the Lie algebra incorresponds the operator  $L_{e_i}\circ (\mathrm{Id}|_{\bigwedge^{\text{even}}W}-\mathrm{Id}|_{\bigwedge^{\text{odd}}W})$ in the representation $\dot{\bigwedge} W$  , while $V_i$ is the operator $D_{2e_i^*}\circ (\mathrm{Id}|_{\bigwedge^{\text{even}}W}-\mathrm{Id}|_{\bigwedge^{\text{odd}} W})$.
\end{lma}
\begin{proof}
Recall the isomorphism 

\[\bigwedge^2 V \cong \mathfrak{so}(Q) \subset \operatorname{End}(V)\]
is given by
\[a \wedge b \mapsto \varphi_{a \wedge b}\]
for $a$ and $b$ in $V$, where $\varphi_{a \wedge b}$ is defined by
$$
\varphi_{a \wedge b}(v)=2(Q(b, v) a-Q(a, v) b)
$$

We have \[\varphi_{e_i\wedge e_{2n+1}}(v)=2(Q(e_{2n+1},v)\cdot e_i-Q(e_i,v)\cdot e_{2n+1})=2(v_{2n+1}\cdot e_i-v_{n+i}\cdot e_{2n+1})\]
Hence $\varphi_{e_i\wedge e_{2n+1}}=2(E_{i,2n+1}-E_{2n+1,n+i})$. That is, the basis $U_i$ in Lie algebra maps via Clifford algebra into $\mathfrak{g l}\left(\dot{\bigwedge} W\right)$ 
as following
\[e_i\wedge e_{2n+1}\mapsto 2(E_{i,2n+1}-E_{2n+1,n+i})\mapsto e_i\cdot e_{2n+1}\mapsto  L_{e_i}\circ (\mathrm{Id}|_{\bigwedge^{\text{even}}W}-\mathrm{Id}|_{\bigwedge^{\text{odd}}W}).\]
Similarly,
\[e_{n+i}\wedge e_{2n+1}\mapsto 2(E_{n+i,2n+1}-E_{2n+1,i})\mapsto e_{n+i}\cdot e_{2n+1}\mapsto D_{2e_i^*}\circ (\mathrm{Id}|_{\bigwedge^{\text{even}}W}-\mathrm{Id}|_{\bigwedge^{\text{odd}} W}).\]\end{proof}
Each $e_I=e_{i_1}\wedge\cdots\wedge e_{i_k}$ is an eigenvector with with weight $\frac{1}{2}\left(\sum_{i \in I} L_i-\sum_{j \notin I} L_j\right)$,and the highest weight is  $\frac{1}{2}\left(L_1+\cdots+L_n\right)$ with eigenvector $e_1\wedge\cdots\wedge e_n$.

Whether $n$ is odd or even, we have \[L_{e_i}\circ (\mathrm{Id}|_{\bigwedge^{\text{even}}W}-\mathrm{Id}|_{\bigwedge^{\text{odd}}W})(e_1\wedge\cdots\wedge e_n)=\pm e_i\wedge(e_1\wedge\cdots\wedge e_n)=0\]
\[D_{2e_i^*}\circ (\mathrm{Id}|_{\bigwedge^{\text{even}}W}-\mathrm{Id}|_{\bigwedge^{\text{odd}} W})=\pm 2e_1\wedge\cdots e_{i-1}\wedge e_{i+1}\wedge\cdots\wedge e_n.\] Notice that this term will not contribute when pairing with $(e_1\wedge\cdots \wedge e_n)^*$.
Finally, direct calculation gives
\begin{equation}
\begin{aligned}
(1+At^{-1})(e_1\wedge\cdots e_n) ={}& [1+t^{-1}(\sum_{1 \leqslant i, j \leqslant n} a_{i j}(E_{i, j}-E_{n+j, n+i})+ \sum_{1 \leqslant i< j \leqslant n} a_{i j}(E_{i, n+j}-E_{j i n+i})\\
&+\sum_{1 \leqslant i <j\leq n} a_{i j}(E_{n+i, j}-E_{n+j, i})+\sum_{1\leq i\leq n}a_{i,2n+1}(E_{i,2n+1}-E_{2n+1,n+i}) \\
&+\sum_{1\leq i\leq n}a_{n+i,2n+1}(E_{n+i,2n+1}-E_{2n+1,i}))](e_1\wedge\cdots\wedge e_n)\\
&=(1+\frac{1}{2}t^{-1}\sum_{1\leq i\leq n}a_{ii})e_1\wedge\cdots\wedge e_n.
\end{aligned}
\end{equation}

After pairing with $(e_1\wedge\cdots\wedge e_n)^*$, we get $1+\frac{1}{2}t^{-1}\mathrm{Tr}(A_{nn})$.

In conclusion, the Hamiltonians are  $\mathrm{Tr}(A_{kk})$ and $\operatorname{Tr}\left(\bigwedge^2 A_{k, k}\right)$ for $1\leq k\leq n-1$, and $\mathrm{Tr}(A_{n,n})$ for the Spinor representation. Notice that $\operatorname{Tr}\left(\bigwedge^2 A_{1, 1}\right)=0$, so there are actually $2n-2=h^\vee-1$ Hamiltonians.

\section*{Acknowledgement}We are thankful to Yehao Zhou for many discussions with the lemmas in Section 2, to Michael Finkelberg for bringing this problem to our attention and sharing many his ideas. Special thanks go to Rui Xiong for teaching us to use Sage and many generous discussions for the computations.   Additionally, we would like to express our appreciation to Roman Travkin for the conjecture of the quantization of the Hamiltonians, and to Olivier Schiffmann for teaching us a lot of techniques of the Lie algebra representation. Finally, we would like to thank the reviewer for taking the time and effort necessary to review the manuscript. We sincerely appreciate all valuable comments and suggestions, which helped us to improve the quality of the manuscript. This work was supported by Fondation  Mathématique Jacques Hadamard (FMJH) during the author's stay in Université Paris Saclay.

\printbibliography{}

@article{bei91,
   title={Quantization of Hitchin’s integrable system and Hecke 
   eigensheaves},
  author={Beilinson, A and Drinfeld, V},
  journal={available at: https://math.uchicago.edu/~drinfeld/langlands.html},
  year=2000
}

@article{bfn162,
  title={Towards a mathematical definition of Coulomb branches of $3 $-dimensional $\mathcal {N}= 4$ gauge theories, II},
  author={Braverman, A and Finkelberg, M and Nakajima, H},
  journal={arXiv preprint arXiv:1601.03586},
  year={2016}
}

@article{mal05,
  title={The minimal degeneration singularities in the affine Grassmannians},
  author={Malkin, A and Ostrik, V and Vybornov, M},
  journal={Duke Mathematical Journal},
  volume={126},
  number={2},
  pages={233--249},
  year={2005},
  publisher={Duke University Press}
}

@book{Yokonuma,
  title={Tensor spaces and exterior algebra},
  author={Yokonuma, T},
  number={108},
  year={1992},
  publisher={American Mathematical Soc.}
}

@article{kostant85,
  title={The McKay correspondence, the Coxeter element and representation theory},
  author={Kostant, B},
  journal={Elie Cartan e le mathematique d’ajourd’hui},
  pages={209--255},
  year={1985}
}

@book{humphreys12,
  title={Introduction to Lie algebras and representation theory},
  author={Humphreys, James E},
  volume={9},
  year={2012},
  publisher={Springer Science \& Business Media}
}

@article{nw,
  title={Coulomb branches of quiver gauge theories with symmetrizers},
  author={Nakajima, H and Weekes, A},
  journal={Journal of the European Mathematical Society (EMS Publishing)},
  volume={25},
  number={1},
  year={2023}
}

@book{cg,
  title={Representation theory and complex geometry},
  author={Chriss, N and Ginzburg, V},
  volume={42},
  year={1997},
  publisher={Springer}
}

@book{fulton13,
  title={Representation theory: a first course},
  author={Fulton, W and Harris, J},
  volume={129},
  year={2013},
  publisher={Springer Science \& Business Media}
}

@article{bern82,
  title={Schubert cells and the cohomology of the spaces G/P},
  author={Bernstein, IN and Gelfand, IM and Gelfand, SI},
  journal={LMS Lecture Notes},
  volume={69},
  pages={115--140},
  year={1982},
  publisher={Cambridge Univ. Press}
  }

@article{bou,
  title={Groupes et algebres de Lie, Ch. 4-6, Hermann, Paris,(1968)},
  author={Bourbaki, N},
  journal={Springer},
  volume={39},
  pages={1590},
  year={1981}
}

@article{fm,
  title={Semi-infinite flags I, the case of a global curve P1, Differential Topology, Infinite-Dimensional Lie Algebras, and Applications: DB Fuchs’ 60th Anniversary Collection, Editors A},
  author={Finkelberg, M and Mirkovic, I},
  journal={Astashkevich, S. Tabachnikov, AMS series Adv. in the Math. Sci},
  volume={194},
  pages={13},
  year={1999}
}

@article{KWY,
  title={Yangians and quantizations of slices in the affine Grassmannian},
  author={Kamnitzer, J and Webster, B and Weekes, A and Yacobi, O},
  journal={Algebra \& Number Theory},
  volume={8},
  number={4},
  pages={857--893},
  year={2014},
  publisher={Mathematical Sciences Publishers}
}

@incollection{sp,
  title={Linear algebraic groups},
  author={Springer, T. A},
  booktitle={Algebraic Geometry IV: Linear Algebraic Groups Invariant Theory},
  pages={1--121},
  year={1998},
  publisher={Springer}
}

@article{LY,
  title={Group orbits and regular partitions of Poisson manifolds},
  author={Lu, J and Yakimov, M},
  journal={Communications in Mathematical Physics},
  volume={283},
  number={3},
  pages={729--748},
  year={2008},
  publisher={Springer}
}

@article{Jos,
  title={The minimal orbit in a simple Lie algebra and its associated maximal ideal},
  author={Joseph, A},
  journal={Annales scientifiques de l'{\'E}cole normale sup{\'e}rieure},
  volume={9},
  number={1},
  pages={1--29},
  year={1976}
}

@book{kac,
  title={Infinite-dimensional Lie algebras},
  author={Kac, V. G},
  year={1990},
  publisher={Cambridge university press}
}

@article{YH,
  title={Note on some properties of generalized affine Grassmannian slices},
  author={Zhou, Y},
  journal={arXiv preprint arXiv:2011.04109},
  year={2020}
}

@article{wang,
  title={Dimension of a minimal nilpotent orbit},
  author={Wang, W},
  journal={Proceedings of the American Mathematical Society},
  volume={127},
  number={3},
  pages={935--936},
  year={1999}
}

@article{hanany,
  title={Quiver theories for moduli spaces of classical group nilpotent orbits},
  author={Hanany, A and Kalveks, R},
  journal={Journal of High Energy Physics},
  volume={2016},
  number={6},
  pages={1--61},
  year={2016},
  publisher={Springer}
}

@article{achar13,
  title={Geometric Satake, Springer correspondence and small representations},
  author={Achar, P. N and Henderson, A},
  journal={Selecta Mathematica},
  volume={19},
  number={4},
  pages={949--986},
  year={2013},
  publisher={Springer}
}

@article{lus81,
  title={Green polynomials and singularities of unipotent classes},
  author={Lusztig, G},
  journal={Advances in Mathematics},
  volume={42},
  number={2},
  pages={169--178},
  year={1981},
  publisher={Academic Press}
}

@article{bfn16,
  title={Coulomb branches of 3 d N= 4 quiver gauge theories and slices in the affine Grassmannian (with appendices by Alexander Braverman, Michael Finkelberg, Joel Kamnitzer, Ryosuke Kodera, Hiraku Nakajima, Ben Webster, and Alex Weekes)},
  author={Braverman, A and Finkelberg, M and Nakajima, H},
  journal={arXiv preprint arXiv:1604.03625},
  year={2016}
}

\end{document}